\definecolor{codegreen}{rgb}{0,0.6,0}
\definecolor{codegray}{rgb}{0.5,0.5,0.5}
\definecolor{codepurple}{rgb}{0.58,0,0.82}
\definecolor{backcolour}{rgb}{0.95,0.95,0.92}
\lstdefinestyle{mystyle}{
    backgroundcolor=\color{backcolour},   
    commentstyle=\color{codegreen},
    keywordstyle=\color{magenta},
    numberstyle=\tiny\color{codegray},
    stringstyle=\color{codepurple},
    basicstyle=\ttfamily\footnotesize,
    breakatwhitespace=false,         
    breaklines=true,                 
    captionpos=b,                    
    keepspaces=true,                 
    numbers=left,                    
    numbersep=5pt,                  
    showspaces=false,                
    showstringspaces=false,
    showtabs=false,                  
    tabsize=2
}
\tikzset{negated/.style={
        decoration={markings,
            mark= at position 0.5 with {
                \node[transform shape] (tempnode) {$\backslash$};
            }
        },
        postaction={decorate}
    }
}
\newcolumntype{M}[1]{>{\centering\arraybackslash}m{#1}}
\newcolumntype{N}{@{}m{0pt}@{}}
\newtheorem{theorem}{Theorem}[subsection]
\newtheorem*{theorem*}{Theorem A}
\newtheorem{cor}[theorem]{Corollary}
\newtheorem{lemma}[theorem]{Lemma}
\newtheorem{prop}[theorem]{Proposition}
\theoremstyle{definition}
\newtheorem{remark}[theorem]{Remark}
\newtheorem{definition}[theorem]{Definition}
\newtheorem{conjecture}[theorem]{Conjecture}
\newcommand{\p}{\phantom}
\newcommand{\XXX}{\hspace{-7pt}\substack{\p{X}XX\\X}}
\newcommand{\OXX}{\hspace{-7pt}\substack{\p{X}XX\\0}}
\newcommand{\XOX}{\hspace{-7pt}\substack{\p{X}0X\\X}}
\newcommand{\XXO}{\hspace{-5pt}\substack{\p{I}X0\\X}}
\newcommand{\OOX}{\hspace{-7pt}\substack{\p{X}0X\\0}}
\newcommand{\XOO}{\hspace{-6pt}\substack{\hspace{-1pt}\p{X}0\hspace{-1pt}\p{i}0\\X}}
\newcommand{\OOO}{\hspace{-6pt}\substack{\p{0}\hspace{1pt}0\hspace{1pt}0\\0}}
\newcommand{\diam}{\textup{diam}}
\newcommand{\nth}{\textup{th}}
\newcommand{\N}{\mathbb{N}}
\newcommand{\R}{\mathbb{R}}
\newcommand{\K}{\mathbb{K}}
\newcommand{\F}{\mathbb{F}}
\newcommand{\I}{\mathbb{I}}
\newcommand{\Z}{\mathbb{Z}}
\numberwithin{equation}{subsection}
\title{Combinatorics on Number Walls and the $P(t)$-adic Littlewood Conjecture}
\author{Steven Robertson}
\date{}
\begin{document}
\maketitle
\begin{abstract}
    \noindent In 2004, de Mathan and Teulié stated the $p$-adic Littlewood Conjecture ($p$-LC) in analogy with the classical Littlewood Conjecture. Let $\F_q$ be a finite field $P(t)$ be an irreducible polynomial with coefficients in $\F_q$. This paper deals with the analogue of $p$-LC over the ring of formal Laurent series over $\F_q$, known as the $P(t)$-adic Littlewood Conjecture ($P(t)$-LC). \\

    \noindent Firstly, it is shown that any counterexample to $P(t)$-LC for the case $P(t)=t$ induces a counterexample to $P(t)$-LC when $P(t)$ is \textit{any} irreducible polynomial. Since Adiceam, Nesharim and Lunnon (2021) disproved $P(t)$-LC when $P(t)=t$ and when $\F_q$ is a finite field with characteristic 3, one obtains a disproof of $P(t)$-LC over any such field in full generality (i.e for any choice of irreducible polynomial $P(t)$).\\

    \noindent The remainder of the paper is dedicated to proving two metric results on $P(t)$-LC with an additional monotonic growth function $f$ over an arbitrary finite field. The first \textendash ~a Khintchine-type theorem for $P(t)$-adic multiplicative approximation \textendash ~enables one to determine the measure of the set of counterexamples to $P(t)$-LC for any choice of $f$. The second complements this by showing that the Hausdorff dimension of the same set is maximal when $P(t)=t$ in the critical case where $f=\log^2$. These results are in agreement with the corresponding theory of multiplicative Diophantine approximation over the reals.\\
    
    \noindent Beyond the originality of the results, the main novelty of the work comes from the methodology used. Classically, Diophantine approximation employs methods from either Number Theory or Ergodic Theory. This paper provides a third option: combinatorics. Specifically, an extensive combinatorial theory is developed relating $P(t)$-LC to the properties of the so-called \textit{number wall} of a sequence. This is an infinite array containing the determinant of every finite Toeplitz matrix generated by that sequence. In full generality, the paper creates a dictionary allowing one to transfer statements in Diophantine approximation in positive characteristic to combinatorics through the concept of a number wall, and conversely. \\

\end{abstract}
\tableofcontents

\section{Introduction}

 \noindent Let $\alpha$ be a real number. Let $\left|\alpha\right|$ denote the usual absolute value and define $\left|\left\langle\alpha\right\rangle\right|$ as the distance from $\alpha$ to its nearest integer. A classical conjecture due to Littlewood states that for any real numbers $\alpha$ and $\beta$, \[\inf_{n\in\N\backslash\{0\}}|n|\,|\!\left<n\alpha\right>\!|\,|\!\left< n\beta\right>\!| =0.\]
 \noindent The Littlewood Conjecture (from now on abbreviated to LC) remains open. The current state of the art is a result by Einsiedler, Katok and Lindenstrauss \cite{EKL}, who proved the set of counterexamples to LC has Hausdorff dimension zero. In 2004, de Mathan and Teulié \cite{padic} suggested a variant commonly known as the $p$-adic Littlewood Conjecture (abbreviated to $p$-LC). Let $p$ be a prime number. Given a natural number $n$ expanded as $n=p^k\cdot n_1$, where $k$ and $n_1$ are natural numbers such that $n_1$ is coprime to $p$, define the \textbf{$p$-adic norm} of $n$ as $|n|_p:=p^{-k}$.  \begin{conjecture}[\textit{$\mathbf{p}$-\textbf{LC}, de Mathan and Teulié, 2004}]
     For any prime $p$ and any real number $\alpha$,\begin{equation}
    \inf_{n\in\N\backslash\{0\}}|n|\cdot|\langle n\alpha\rangle|\cdot |n|_p=0. \label{eq:padic}
\end{equation} 
 \end{conjecture}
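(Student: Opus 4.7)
The plan is to exploit the factorization $n=p^k m$ with $\gcd(m,p)=1$, which reduces $|n|\cdot|n|_p\cdot|\langle n\alpha\rangle|$ to $m\cdot|\langle m(p^k\alpha)\rangle|$. Consequently, $p$-LC for $\alpha$ is equivalent to
\[
\inf_{\substack{k\geq 0\\ m\geq 1,\;(m,p)=1}} m\cdot\bigl|\langle m(p^k\alpha)\rangle\bigr|\;=\;0,
\]
i.e.\ to the existence of arbitrarily good rational approximations to some point of the forward orbit of $\alpha$ under the expanding map $T\colon x\mapsto px\bmod 1$. This reformulation isolates the role of the $p$-adic factor and places the problem within the classical theory of continued fractions applied to orbits of $T$.

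The first step is a dichotomy on the partial quotients of $\alpha$. If $\alpha$ has unbounded partial quotients $(a_n)$, then its convergents satisfy $q_n\cdot|\langle q_n\alpha\rangle|\leq 1/a_{n+1}$, so taking $n=q_{n_j}$ along a subsequence with $a_{n_j+1}\to\infty$ forces the infimum in \eqref{eq:padic} to vanish via $|q_{n_j}|_p\leq 1$. One therefore reduces to the case where $\alpha$ is badly approximable, meaning all partial quotients are bounded by some $B=B(\alpha)$.

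For such $\alpha$, I would aim to exhibit at least one iterate $k\geq 0$ for which $p^k\alpha$ has unbounded partial quotients; the reformulation above then closes the argument by applying the previous step to $p^k\alpha$ in place of $\alpha$. The natural route is dynamical: the map $T$ is exact and exponentially mixing with respect to Lebesgue measure, so almost every orbit visits the rapidly-approximable regime. To upgrade from almost-every to every, I would attempt to quantify the combinatorial effect on a continued fraction expansion of multiplication by $p$ via transfer-operator estimates for the Gauss map, and argue that the bounded-quotient condition cannot be preserved along an entire forward orbit of a single irrational.

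The decisive obstacle is precisely this last step. It is consistent with current knowledge that there could exist a badly approximable $\alpha$ whose entire forward $T$-orbit consists of uniformly badly approximable numbers, and ruling out such orbits appears to require rigidity far beyond what is available: the strongest unconditional result, due to Einsiedler and Kleinbock on $\mathrm{SL}_3(\R)\times\mathrm{SL}_3(\Q_p)$, shows only that the exceptional set has Hausdorff dimension zero. Closing this gap would demand a genuinely new arithmetic or measure-rigidity input for the associated diagonal action on the $S$-arithmetic homogeneous space, and identifying the correct such input is, in my view, the hardest part of any frontal attack on $p$-LC.
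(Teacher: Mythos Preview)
The statement you have been asked to prove is not a theorem but an \emph{open conjecture}: the $p$-adic Littlewood Conjecture of de Mathan and Teuli\'e remains unresolved, and the paper does not claim to prove it. It is stated in the paper only as background and motivation for the function-field analogue. There is therefore no proof in the paper to compare your proposal against.

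Your proposal is in fact not a proof either, and you recognize this yourself in the final paragraph. The reduction to orbits of $x\mapsto px\bmod 1$ and the dichotomy on partial quotients are standard and correct, as is the observation that the hard case is a badly approximable $\alpha$ whose entire forward $p$-orbit remains uniformly badly approximable. Your assessment of the obstruction is accurate: ruling out such orbits is precisely the content of $p$-LC, and the best unconditional result is the Hausdorff-dimension-zero theorem of Einsiedler and Kleinbock that you cite. So there is no gap to name beyond the one you have already named --- the conjecture is open, and your outline correctly isolates where a genuine proof would have to supply a new idea.
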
 \noindent In 2007, Einsiedler and Kleinbock  \cite{2007} proved that the set of counterexamples to $p$-LC has Hausdorff dimension zero. Metric results have been obtained when a growth function is added to the left hand side of equation (\ref{eq:padic}). Explicitly, for a non-decreasing function $f$ and prime number $p$, define the sets
  \begin{align}
     W(p,f):&=\left\{\alpha\in[0,1): \inf_{n\in\N\backslash\{0\}}f(n)\cdot n\cdot\left|\left\langle n\alpha\right\rangle\right|\cdot|n|_{p}=0\right\},\label{W(p,f)}
 \end{align}  and \begin{equation}
 M(p,f)=[0,1)\backslash W(p,f).\label{M(p,f)}\end{equation} That is, $W(p,f)$ is the set of real numbers in the unit interval satisfying $p$-LC with growth function $f$, and $M(p,f)$ is the set of those numbers failing it. In 2009, Bugeaud and Moshchevitin \cite{log2CE} proved that $M(p,\log^2)$ has full Hausdorff dimension. Two years later, Bugeaud, Haynes and Velani \cite{Metric} proved the Lebesgue measure of the same set is zero. Then,  Badziahin and Velani \cite{mixed} established that $$\dim M(p,\log(n)\cdot\log(\log(n)))=1.$$ Above and throughout, $\dim$ refers to the Hausdorff dimension.\\

 \noindent The $p$-adic Littlewood Conjecture admits a natural analogue over function fields. In order to state it, let $q\in\N$ be a positive power of a prime and let $\mathbb{F}_q$ be the finite field with cardinality $q$. \\
 
 \noindent Furthermore, let $\F_q[t]$ be the ring of polynomials with coefficients in $\F_q$ and define $(\F_q[t])_n$ as the subset of $\F_q[t]$ comprised of only the polynomials of degree less than or equal to $n\in\N$. Similarly, ${\F_q}(t)$ is the field of rational functions over ${\F_q}$. The \textbf{absolute value} of $\Theta(t)\in{\F_q}(t)$ is then \begin{align}|\Theta(t)|=q^{\deg(\Theta(t))}.\label{absv}\end{align} 
    
\noindent Although the real and function field absolute values share a notation, it will be clear from context which is being used in each instance. Using this metric, the completion of the field of rational functions is given by ${\F_q}\!\left(\!\left(t^{-1}\right)\!\right)$, the field of formal Laurent series in the variable $t^{-1}$ with coefficients in ${\F_q}$. An element $\Theta(t)$ of this field is written explicitly as \begin{equation}\label{laurent}\Theta(t)=\sum^\infty_{i=-d(\Theta)}a_it^{-i}\end{equation} for some $d(\Theta)\in\mathbb{Z}$ and $a_i\in{\F_q}$ with $a_{d(\Theta)}\neq0$. With this expression, the \textbf{degree} of $\Theta(t)\in{\F_q}\left(\!\left(t^{-1}\right)\!\right)$ is defined as $\deg(\Theta(t)):= d(\Theta)$. The \textbf{fractional part} of $\Theta(t)$ is given by $$\langle \Theta(t) \rangle := \sum^\infty_{i=1}a_it^{-i}.$$
 
 \noindent In analogy with the real numbers, the \textbf{unit interval} $\I$ is defined as \begin{equation}\label{Unit_interval}\I=\left\{\Theta(t)\in{\F_q}\left(\!\left(t^{-1}\right)\!\right): |\Theta(t)|<1\right\}.\end{equation} That is, $\I$ is the set containing the fractional parts of every $\Theta(t)\in{\F_q}\left(\!\left(t^{-1}\right)\!\right)$. \\

 \noindent The analogue of the prime numbers is the set of irreducible polynomials. Just as one defined the $p$-adic norm for a prime $p$, given an irreducible polynomial $P(t)\in{\F_q}[t]$ of degree $d(P)\in\N$, define the $\mathbf{P(t)}$\textbf{-adic norm} of a polynomial $N(t)\in{\F_q}[t]$ as 
 \[|N(t)|_{P(t)}=|P(t)|^{-i}=q^{-d(P)\cdot i},~~~\text{          where          }~~~ i=\max\left\{j\ge0: P(t)^j|N(t)\right\}.\]
\noindent The function field analogue of $p$-LC, also due to de Mathan and Teulié \cite{padic}, reads as follows: 

\begin{conjecture}[\textit{$\mathbf{P(t)}$-\textbf{adic Littlewood Conjecture}, de Mathan and Teulié, 2004}]\label{tlc}
        For any irreducible polynomial $P(t)\in{\F_q}[t]$ and any Laurent series $\Theta(t)\in{\F_q}\left(\!\left(t^{-1}\right)\!\right)$, \[\inf_{N(t)\in{\F_q}[t]\backslash\{0\}}|N(t)|\cdot \left|\left\langle N(t)\cdot \Theta(t)\right\rangle\right|\cdot |N(t)|_{P(t)}=0.\]
    \end{conjecture}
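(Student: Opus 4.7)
The plan is to first reduce to the case $p(t)=t$, which is the most tractable since the base-$p(t)$ expansion then coincides with the Laurent expansion of $\Theta(t)$ in powers of $t^{-1}$. Writing any nonzero polynomial $N(t)=t^k M(t)$ with $t\nmid M(t)$ gives $|N(t)|\cdot |N(t)|_t = |M(t)|$, so the infimum to be shown to vanish becomes $\inf_{k\ge 0,\, t\,\nmid\, M(t)} |M(t)|\cdot|\langle t^k M(t)\Theta(t)\rangle|$. Thus the conjecture asks whether, along the orbit $\{t^k\Theta(t)\}_{k\ge 0}$, one can find a rational approximation $P(t)/M(t)$ with denominator coprime to $t$ whose multiplicative height $|M(t)|\cdot|\langle t^k M(t)\Theta(t)\rangle|$ is arbitrarily small.

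The natural tool in this setting is the continued fraction expansion $\Theta(t)=[a_0(t);a_1(t),a_2(t),\ldots]$ with $a_i(t)\in\F_q[t]$, whose convergents $P_i(t)/Q_i(t)$ satisfy $|Q_i(t)|\cdot|\langle Q_i(t)\Theta(t)\rangle|=|a_{i+1}(t)|^{-1}$. If the partial quotients of $\Theta(t)$ have unbounded degree, then taking $k=0$ and $M(t)=Q_i(t)$ suffices: any two consecutive convergents are coprime, so at least one in each consecutive pair has denominator coprime to $t$. The substantive case is therefore when $\Theta(t)$ is \emph{badly approximable}, i.e.\ has partial quotients of uniformly bounded degree; here one must exploit the multiplication by $t^k$ and show that among $\{t^k\Theta(t)\}_{k\ge 0}$ some iterate ceases to be badly approximable.

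Accordingly, the central step is to understand how continued fraction expansions transform under multiplication by $t$. If one could prove that for every badly approximable $\Theta(t)$ there exists $k$ for which $t^k\Theta(t)$ has a partial quotient of arbitrarily large degree, the conjecture would follow. A concrete attempt is to track the Hankel/Toeplitz determinants formed from the Laurent coefficients of $\Theta(t)$ under the shift corresponding to multiplication by $t$, since the vanishing patterns of such determinants record precisely the locations and sizes of the partial quotients; the hope would be to force systematic growth of some determinant-based quantity as $k$ increases.

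This last step is where I expect the main obstacle to lie: the behaviour of continued fractions under multiplication by $t$ is notoriously nonlocal and does not reduce to any simple rule on partial quotients. Indeed, the existence of Laurent series whose Hankel determinants exhibit rigid, uniformly bounded patterns---a phenomenon tied to the combinatorial structure of certain automatic or self-similar sequences in positive characteristic---is exactly what allows counterexamples such as that of Adiceam--Nesharim--Lunnon to exist in characteristic three. Any proof in affirmative cases therefore seems to require a genuinely characteristic-dependent mechanism forcing unbounded determinant growth, which is where the combinatorial number-wall framework developed in the body of the paper appears essential.
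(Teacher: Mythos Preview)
The statement you are attempting to prove is a \emph{conjecture}, not a theorem: the paper does not contain a proof of it, and in fact one of the paper's main contributions (Theorem~\ref{mainresult} combined with the Adiceam--Nesharim--Lunnon counterexample) is to show that the conjecture is \emph{false} over every field of characteristic three, for every irreducible $p(t)$. So there is no ``paper's own proof'' to compare against, and any purported general proof must contain an error.

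Your outline correctly reduces to the $t$-adic case and correctly identifies the badly approximable case as the crux, but the decisive step---showing that some shift $t^k\Theta(t)$ acquires an unbounded partial quotient---is exactly the step that fails in general. You acknowledge this yourself in the final paragraph when you note that certain Laurent series have Hankel determinants with ``rigid, uniformly bounded patterns''; in the language of the paper (Corollary~\ref{growcor}), this is precisely the statement that there exist sequences whose number wall has bounded window size, and such sequences furnish counterexamples. So your proposal is not a proof but rather a heuristic explanation of why the conjecture is plausible in some characteristics and why it breaks down in others, which is consistent with the paper's actual content.
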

 \noindent The $P(t)$-adic Littlewood Conjecture is abbreviated to $P(t)$-LC. When $P(t)=t$, it becomes the $\mathbf{t}$\textbf{-adic Littlewood Conjecture} (abbreviated to $t$-LC). \\
 
 \noindent In the same paper as it was conjectured, de Mathan and Teulié establish that $t$-LC is false when ${\F_q}$ is replaced with an infinite field. Indeed, the only difference in how the function field set up is defined in this case is found in the absolute value, which is redefined to be $|\Theta(t)|=2^{\deg(\Theta(t))}$. This work was extended by Bugeaud and de Mathan \cite{CE}, who provide explicit counterexamples to $t$-LC in this case. \\
 
 \noindent However, $P(t)$-LC as stated in Conjecture \ref{tlc} (that is, over finite fields) is more challenging. In 2017, Einsiedler, Lindenstrauss
and Mohammadi \cite{PC} proved results on the positive characteristic analogue of the measure classification results of Einsiedler, Katok and Lindenstrauss \cite{EKL}, but their work does not discuss the possibility that $P(t)$-LC fails on a set of Hausdorff dimension zero. \\

\noindent In a series of recent papers \cite{Faustin, GaRo, P(t)_LC}, explicit counterexamples to $t$-LC have been discovered over $\F_q$ for $q$ a power of 5 or $q$ a power of $p\equiv 3\mod 4$. Furthermore, Garrett and the second named author have constructed a family of sequences that are conjectured to provide counterexamples to $t$-LC over any field of odd characteristic \cite[Conjecture 1.8]{GaRo}. From these results, it is tempting to believe that $P(t)$-LC is false over all finite fields of odd characteristic and for all irreducible polynomials $P(t)$. 

\subsection{Statement of Results}
The first (and most elementary) result in this paper gives further evidence to this claim by showing that any counterexample to $t$-LC induces a counterexample to $P(t)$-LC for any irreducible polynomial $P(t)$. This result thus reduces the problem of totally disproving $P(t)$-LC to only disproving $t$-LC. It is not expected that such a transference result should exist for $p$-LC (i.e in the real setting). 
 \begin{theorem}[Transference between $P(t)$-LC and $t$-LC]\label{mainresult}
    Let $l$ be a natural number, let $\K$ be a field and let $P(t)\in{\K}[t]$ be an irreducible polynomial of degree $d(P)$. Additionally, define a Laurent series\begin{equation}\label{CEtheta} \Theta(t):=\sum_{i=1}^\infty b_it^{-i}\in{\K}\left(\!\left(t^{-1}\right)\!\right)\end{equation} and let $q=2$ if $\K$ is infinite and let $q$ be the cardinality of $\K$ otherwise. Assume that\begin{equation}\inf_{N(t)\in{\K}[t]\backslash\{0\}}|N(t)|\cdot \left|\left\langle N(t)\cdot \Theta(t)\right\rangle\right|\cdot |N(t)|_{t}=q^{-l}.\label{tinf}\end{equation} Then the Laurent series $\Theta(P(t))\in{\K}(\!(P(t)^{-1})\!)\subset{\K}\left(\!\left(t^{-1}\right)\!\right)$ satisfies \begin{equation}\label{ptinf}\inf_{N(t)\in{\K}[t]\backslash\{0\}}|N(t)|\cdot \left|\left\langle N(t)\cdot \Theta(P(t))\right\rangle\right|\cdot |N(t)|_{P(t)}=q^{-l\cdot d(P)}.\end{equation} In particular, if $\Theta(t)$ is a counterexample to $t$-LC in the sense that the infimum in equation (\ref{tinf}) is positive, then $\Theta(P(t))$ is a counterexample to $P(t)$-LC in the sense that the infimum in equation (\ref{ptinf}) is positive.
   \end{theorem}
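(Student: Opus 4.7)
The plan is to prove the identity (\ref{ptinf}) by establishing matching upper and lower bounds between the two infima. The upper bound is routine; the lower bound is where the work lies.

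For the upper bound, each factor of the $t$-LC product takes values in $q^\Z \cup \{0\}$, so the positive infimum $q^{-l}$ in equation (\ref{tinf}) is attained by some $M(t)\in\F_q[t]\setminus\{0\}$. The natural candidate for the $p(t)$-side is $N(t):=M(p(t))$. Writing $M(t)\Theta(t) = P(t) + R(t)$ with $P\in\F_q[t]$ a polynomial and $R\in\I$ the fractional part, the substitution $t\mapsto p(t)$ preserves this decomposition: $M(p(t))\Theta(p(t)) = P(p(t)) + R(p(t))$, where $P(p(t))\in\F_q[t]$ and $R(p(t))$ has $t$-degree $m\deg R < 0$. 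Consequently $|R(p(t))|=|R(t)|^m$, and identical reasoning gives $|M(p(t))|=|M(t)|^m$ and $|M(p(t))|_{p(t)}=|M(t)|_t^m$. Multiplying the three yields $(q^{-l})^m = q^{-lm}$.

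For the lower bound, I exploit the fact that $\F_q[t]$ is a free $\F_q[p(t)]$-module of rank $m$ with basis $\{1,t,\ldots,t^{m-1}\}$. Any $N(t)\in\F_q[t]\setminus\{0\}$ thus admits a unique decomposition
\[
N(t) = \sum_{r=0}^{m-1} N_r(p(t))\,t^r,
\]
where each $N_r$ is a univariate polynomial and at least one $N_{r_*}$ is nonzero. The strategy is to lower-bound each factor of the $p(t)$-LC product in terms of this single component. First, $|N(t)|\ge q^{r_*}|N_{r_*}(t)|^m$, since the $r_*$-summand has $t$-degree $m\deg N_{r_*}+r_*$. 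Second, $|N(t)|_{p(t)}\ge |N_{r_*}(t)|_t^m$, since the $p(t)$-adic valuation of $N(t)$ equals the minimum over $r$ of that of $N_r(p(t))$. Third, $|\langle N(t)\Theta(p(t))\rangle| \ge q^{r_*}|\langle N_{r_*}(t)\Theta(t)\rangle|^m$. Multiplying these three bounds and invoking (\ref{tinf}) on $N_{r_*}(t)$ gives a value of at least $q^{2r_*}\cdot q^{-lm} \ge q^{-lm}$, completing the proof.

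The main obstacle lies in verifying the third bound. The idea is to expand $N(t)\Theta(p(t)) = \sum_r N_r(p(t))\Theta(p(t))\,t^r$ and decompose each factor $N_r(p(t))\Theta(p(t)) \in \F_q((p(t)^{-1}))$ as $Q_r(p(t)) + \rho_r(p(t))$, with $Q_r(p(t))\in\F_q[t]$ a polynomial and $\rho_r(p(t))$ of $t$-degree at most $-m$. The key observation is that the fractional-part contributions $\rho_r(p(t))\,t^r$ have $t$-degrees in pairwise distinct residue classes modulo $m$, so no cancellation occurs between different values of $r$, and the overall absolute value equals the maximum. The canonical valuation-preserving isomorphism $\F_q((t^{-1}))\to\F_q((p(t)^{-1}))$ sending $t\mapsto p(t)$ then identifies $\rho_{r_*}(p(t))$ with the image of $\langle N_{r_*}(t)\Theta(t)\rangle$ under this substitution, yielding $|\rho_{r_*}(p(t))|=|\langle N_{r_*}(t)\Theta(t)\rangle|^m$ and hence the claimed bound.
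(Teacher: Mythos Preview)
Your proof is correct and follows essentially the same route as the paper: both decompose a general $N(t)\in\F_q[t]$ as $\sum_{r=0}^{m-1} t^r N_r(p(t))$ over the free $\F_q[p(t)]$-basis $\{1,t,\dots,t^{m-1}\}$, and both exploit that the summands $t^r\rho_r(p(t))$ (and likewise $t^r N_r(p(t))$) have $t$-degrees in distinct residue classes modulo $m$, so the absolute value of the sum equals the maximum over $r$. The only organisational differences are that the paper first isolates the case $N\in\F_q[p(t)]$ via a Hankel-matrix reformulation (its Lemma~\ref{lemma4}) and absorbs the $p(t)$-adic factor into an explicit $p(t)^k$, whereas you bound all three factors $|N|$, $|N|_{p(t)}$, $|\langle N\Theta(p(t))\rangle|$ directly against a single chosen component $N_{r_*}$; this is slightly more streamlined but not materially different.
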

   \noindent Combining the above theorem with the aforementioned counterexamples to $t$-LC over finite fields \cite{Faustin, GaRo, P(t)_LC} and infinite fields \cite{CE} leads to the following corollary: \begin{cor}
       Let ${\K}$ be an infinite field or a finite field with characteristic 5 or $p\equiv 3\mod 4$. Then, the $P(t)$-adic Littlewood Conjecture is false for any choice of irreducible polynomial $P(t)\in{\K}[t]$.
   \end{cor}
 
 \noindent The remaining theorems in this paper are of a metric nature. The underlying measure used in the related statements is the Haar measure over $\F_q\!\left(\!\left(t^{-1}\right)\!\right)$, denoted by $\mu$. Briefly, for an integer $l$, define as usual a ball of radius $q^{-l}$ around a Laurent series $\Theta(t)\in\F_q\!\left(\!\left(t^{-1}\right)\!\right)$ as \begin{equation}\label{ball}B\left(\Theta(t),q^{-l}\right)=\left\{\Phi\in\F_q\!\left(\!\left(t^{-1}\right)\!\right): |\Theta(t)-\Phi(t)|\le q^{-l}\right\}.\end{equation} This set consists of all the Laurent series which have the same coefficients as $\Theta(t)$ up to and including the power of $t^{-l}$. The Haar measure over $\F_q\!\left(\!\left(t^{-1}\right)\!\right)$ is characterised by its translation invariance: for any $l\in\N$ and any $\Theta(t)\in\F_q\!\left(\!\left(t^{-1}\right)\!\right)$  \begin{equation}\mu\left(\!B\!\left(\Theta(t),q^{-l}\right)\!\right)=q^{-l}.\label{Haar}\end{equation} Let $f:\{0\}\cup\{q^n:n\in\N\}\to\R^+$ be a monotonic increasing function. The following sets are the natural analogues of those defined in (\ref{W(p,f)}) and (\ref{M(p,f)}):\begin{align}\label{W_q(P(t),f)}
     W_q(P(t),f):&=\left\{\Theta(t)\in\mathbb{I}: \inf_{N(t)\in\F_q[t]\backslash\{0\}}f(|N(t)|)\cdot|N(t)|\cdot\left|\left\langle\Theta(t)\cdot N(t)\right\rangle\right|\cdot|N(t)|_{P(t)}=0\right\}
 \end{align} \noindent and \begin{equation}
     M_q(P(t),f):=\I\backslash W_q(P(t),f)\label{M_q(P(t),f)}.
 \end{equation} That is, $W_q(P(t),f)$ is the set of Laurent series satisfying $P(t)$-LC with growth function $f$, and $M_q(P(t),f)$ is the complement of $W_q(P(t),f)$. \\
 
 \noindent In view of Theorem \ref{mainresult}, both of the following metric results specialise to the case $P(t)=t$. The first of these two remaining theorems is a Khintchine-type result on $t$-adic multiplicative approximation. 
   \begin{theorem}
   \label{metric} Let $f:\{0\}\cup\{q^k\}_{k\ge0}\to\mathbb{R}^+$ be a function. Then, \begin{equation}\mu(W_q(t,f))=\begin{cases}0 & \text{ if }\sum_{k\ge0}\frac{k}{f(q^k)}<\infty,\\
   1 & \text{ if }\sum_{k\ge0}\frac{k}{f(q^k)}=\infty\text{ and }$f$\text{ non-decreasing}.
   \end{cases}\label{metsum}
    \end{equation}
\end{theorem}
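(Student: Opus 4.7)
The plan is to adapt the classical Khintchine-Groshev framework to the $p(t)$-adic multiplicative setting. Writing $m := \deg p(t)$ and noting that the product appearing in (\ref{W(p(t),f)}) takes values only in $\{0\} \cup \{q^{-\ell} : \ell \in \Z\}$, one has the identity
\[
W(p(t),f) \;=\; \bigcap_{k \ge 0} E_k, \qquad E_k \;:=\; \bigcup_{N(t) \neq 0} \; \bigcup_{\substack{M(t) \in \F_q[t] \\ \deg M < \deg N}} B\!\left(\frac{M(t)}{N(t)}, \; \frac{q^{-k}}{f(|N|) \cdot |N|^2 \cdot |N|_{p(t)}}\right).
\]
The problem thus reduces to controlling $\mu(E_k)$ for each $k$: the convergence half targets $\mu(E_k) \to 0$ as $k \to \infty$, while the divergence half targets $\mu(E_k) = 1$ for every $k$.

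For the convergence direction, parametrise each nonzero $N(t) \in \F_q[t]$ of degree $n$ by its $p(t)$-adic valuation $j$, writing $N(t) = p(t)^j R(t)$ with $\gcd(R(t), p(t)) = 1$. The non-Archimedean geometry of $\I$ guarantees that each ball in the union has $\mu$-measure equal to its radius. Counting at most $q^{n-jm+1}$ admissible $R(t)$, the $q^n$ admissible $M(t)$ for each fixed $N(t)$, and summing contributions yields
\[
\mu(E_k) \;\le\; \frac{q^{1-k}}{m} \sum_{n \ge 0} \frac{n+m}{f(q^n)},
\]
which tends to $0$ as $k \to \infty$ whenever the series $\sum_k k/f(q^k)$ converges. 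Since $W(p(t),f) \subseteq E_k$ for all $k$, this concludes $\mu(W(p(t),f)) = 0$ by the first Borel-Cantelli lemma.

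For the divergence direction, the goal is $\mu(E_k) = 1$ for every $k$, whence $\mu(W(p(t),f)) = 1$ follows by countable intersection. The strategy pairs a zero-one law with a positive-measure estimate. The set $E_k$ should be invariant (up to bounded multiplicative loss in the defining infimum, absorbable since $f$ is monotonic) under translation by rational functions $P(t)/Q(t)$ with $\gcd(Q, p) = 1$; combined with ergodicity of this action on $\I$, one obtains $\mu(E_k) \in \{0, 1\}$. Positive measure is then secured via a divergent Borel-Cantelli lemma of Sprindzuk-Erd\H{o}s-R\'enyi type, which reduces to a pairwise quasi-independence estimate
\[
\mu\bigl(B_{M,N} \cap B_{M',N'}\bigr) \;\le\; C \cdot \mu(B_{M,N}) \cdot \mu(B_{M',N'}).
\]
In the non-Archimedean setting, any two balls are either disjoint or nested, and the estimate reduces to a case analysis on the $p(t)$-adic valuation of $MN' - M'N$; the divergence $\sum_k k/f(q^k) = \infty$ then guarantees $\mu(E_k) > 0$, hence $\mu(E_k) = 1$.

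The principal obstacle is anticipated to be the quasi-independence estimate. The factor $|N|_{p(t)}$ inflates the contribution of pairs $(N, N')$ sharing a high $p(t)$-adic valuation, so the overlap bound must stratify by this valuation rather than sum blindly. Capturing the sharp linear $n$-multiplicity in $\sum_k k/f(q^k)$ on the divergence side requires that the variance estimate in the Sprindzuk lemma track the $p(t)$-adic stratification precisely, mirroring the technical core of Bugeaud-Haynes-Velani \cite{Metric} in the real $p$-adic setting. Should this direct approach prove intractable, a fallback is to restrict the union defining $E_k$ to polynomials of a single optimally-chosen $p(t)$-adic valuation $j = j(n)$ per degree $n$, trading the logarithmic saving for a reduction to a standard function-field Khintchine problem.
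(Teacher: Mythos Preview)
Your convergence half is essentially the paper's: decompose $A_{N(t)}$ into balls around rationals $P/N$, bound the measure, sum over $N$ stratified by degree and $p(t)$-adic valuation, and apply first Borel--Cantelli. No issue there.

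The divergence half takes a genuinely different route. You propose the classical adaptation of Bugeaud--Haynes--Velani \cite{Metric}: a direct quasi-independence estimate on ball overlaps $\mu(B_{M,N}\cap B_{M',N'})$ via the ultrametric disjoint-or-nested dichotomy and a case analysis on the $p(t)$-adic valuation of $MN'-M'N$, feeding a Sprindzuk-type divergent Borel--Cantelli, then a zero-one law. The paper instead first treats only $p(t)=t$ and obtains the overlap bound by an entirely different mechanism: it rephrases $A_{N(t)}$ in terms of windows in the number wall of $\Theta$ (Theorem~\ref{growth}), groups polynomials by $(\deg M,k)=(h,k)$ into sets $A_{h,k}$, and then the quasi-independence estimate $\mu(A_{h_1,k_1}\cap A_{h_2,k_2})\ll q^{-l_1-l_2}$ is exactly the combinatorial Theorem~\ref{twowind} on pairs of windows, supplemented by Lemma~\ref{Rect} when the corresponding windows touch. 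The zero-one law invoked is the function-field Gallagher theorem of Inoue--Nakada (Proposition~\ref{fullmeas}) rather than an ergodicity argument. Finally, the general $p(t)$ case is deduced from the $t$-adic case by a transference (Lemma~\ref{fconv}) rather than handled directly.

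What each buys: your approach, if the overlap estimate goes through, would be self-contained and parallel to the real case; the paper's approach is the entire point of the paper --- the number-wall dictionary is the advertised novelty, and Theorem~\ref{twowind} is where that machinery earns its keep. Your proposal is honest that the quasi-independence bound is the ``principal obstacle'' and is not actually carried out; in particular the stratification by $p(t)$-adic valuation of $MN'-M'N$ that you sketch is not obviously sharp enough to recover the linear factor $k$ in $\sum k/f(q^k)$, and your fallback of restricting to a single valuation per degree would lose exactly that factor. So as a plan it is plausible but incomplete at the decisive step, and it bypasses rather than engages the paper's central contribution.
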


\noindent This amounts to claiming that the set of Laurent series satisfying $t$-LC with growth function $f$ has zero or full measure depending on if the sum in (\ref{metsum}) converges or diverges. From Theorem \ref{metric}, the measure of $M_q(t,f)$ is also easily derived. This agrees with the analogous theorem over the real numbers, proved in \cite{Metric} by Bugeaud, Haynes and Velani. \\

\noindent The final theorem in this paper computes the Hausdorff dimension of $M_q(t,f)$ in the case $f(\cdot)=\log_q^2(\cdot)$. Recalling Definition \ref{M(p,f)}, this provides the function field analogue of the 2009 result by Bugeaud and Moshchevitin \cite{log2CE}, stating $\dim M(p,\log^2)=1$ in the real case. 
 \begin{theorem}\label{log2}
The set of counterexamples to the $t$-LC with growth function $\log^2$ has full Hausdorff dimension. That is, $$\dim(M_q(t,\log^2))=1.$$
 \end{theorem}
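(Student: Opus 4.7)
The plan is to exploit the combinatorial dictionary between $t$-adic approximation quality and the structure of number walls that the paper develops in Section 3. Specifically, the quantity $|N(t)|\cdot|N(t)|_t\cdot|\langle N(t)\Theta(t)\rangle|$ can be read off, in a precise way, from the sizes and positions of square windows of zeros in the number wall of the coefficient sequence $(b_i)$ of $\Theta(t)=\sum_{i\ge1}b_it^{-i}$. Under this dictionary, membership of $\Theta(t)$ in $M(t,\log^2)$ corresponds to a uniform upper bound of the form $c\log_q n$ on the side length of any square window of zeros occurring around column $n$ in the wall.

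The first step is to record this combinatorial characterisation cleanly: $\Theta(t)\in M(t,\log^2)$ if and only if there is a constant $c>0$ such that every square window of zeros in the number wall of $(b_i)$ situated near column $n$ has side length at most $c\log_q n$. This is obtained by combining the Toeplitz-determinant interpretation of $\langle t^kM(t)\Theta(t)\rangle$, for $M(t)$ coprime to $t$ and $k\ge 0$, with the Frame identities governing how such determinants propagate through the wall. In particular, the presence of a window of side length $w$ at column $n$ forces a multiplier $N(t)$ of degree comparable to $n$ realising $(\deg N)^2|N||N|_t|\langle N\Theta\rangle|\lesssim (\deg N)^2\cdot q^{-w}$, which fails the $\log^2$ lower bound exactly when $w\gtrsim\log_q n$.

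The second step is a Cantor-type construction of sequences with controlled window sizes. Proceed by induction on a rapidly increasing sequence of lengths $n_1<n_2<\cdots$. Having chosen $b_1,\ldots,b_{n_k}$ so that all windows detected in the wall up to column $n_k$ obey the bound, count extensions of length $n_{k+1}-n_k$ that do not create an oversized window. Because the number of positions at which a fresh window of side length exceeding $c\log_q n_{k+1}$ could appear is only polynomial in $n_{k+1}$, while each such event cuts out at most $q^{n_{k+1}-n_k - c\log_q n_{k+1}}$ continuations, a union bound leaves at least $q^{(1-\varepsilon_k)(n_{k+1}-n_k)}$ valid continuations with $\varepsilon_k\to 0$ as $n_{k+1}-n_k\to\infty$.

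Applying the mass distribution principle to the resulting Cantor set embedded in $\I$ then yields $\dim M(t,\log^2)\ge 1-\limsup\varepsilon_k$, and tuning the sequence $n_k$ makes this arbitrarily close to $1$; together with the trivial bound $\dim M(t,\log^2)\le 1$ this proves the theorem. The main obstacle, and where most of the real work lies, is the first step: the combinatorial translation must be sharp enough both to characterise $M(t,\log^2)$ and to make the counting in step two tractable. In the real case, Bugeaud and Moshchevitin \cite{log2CE} had the continued fraction algorithm at their disposal, whereas here the analogous control must be extracted from the Frame identities and their consequences developed in Section 3, which is precisely why the combinatorial machinery of number walls is the central technical contribution of this part of the paper.
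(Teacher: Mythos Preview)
Your plan matches the paper's proof in outline: Theorem~\ref{growth} supplies the dictionary, a Cantor construction over length scales (the paper uses $q^n$) is built by discarding extensions that create oversized windows, the count of bad extensions comes from Corollary~\ref{contain full} and Lemma~\ref{window continue}, and the Mass Distribution Principle finishes. Two corrections to your emphasis, however. First, the ``first step'' you flag as the main obstacle is already done: it is precisely Theorem~\ref{growth}, proved earlier in Section~3, and nothing further needs to be extracted from the Frame identities for this argument. Second, the genuine technical subtlety your sketch omits is the treatment of \emph{open} windows straddling the boundary between stage $k$ and stage $k+1$: a window that is not yet of problematic size at stage $k$ may grow into one, and such windows are not controlled by the naive union bound over positions in the new region. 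The paper handles this by pre-emptively discarding, at each stage, sequences whose number wall has open windows of graded sizes near the right edge (the ``red band'' removal), and then shows inductively that the corresponding ``blue band'' at the next stage is already under control; without this, the counting in your second step does not close.
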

 \noindent Note that Theorem \ref{log2} is nontrivial as Theorem \ref{metric} implies that $M(t,\log^2)$ has measure zero. In the real case, Badziahin and Velani \cite{cantor} show $\log^2(\cdot)$ can be replaced by $\log(\log(\cdot))$. It is unclear if a direct adaptation of the methods from \cite{cantor} would yield the same result in the function field set up.  \\

 \noindent Whilst Theorems \ref{mainresult}, \ref{metric} and \ref{log2} are novel, the latter two are not unexpected. Instead, the true originality of this work lies in the technique that is used to prove them. When approaching problems in Diophantine approximation, one classically uses methods from either Number Theory or Ergodic Theory. This paper provides a third option: combinatorics. The strategy is to rephrase $t$-LC using the concept of a so-called \textit{number wall}. Focusing on $t$-LC is justified by Theorem \ref{mainresult} which shows $t$-LC underpins $P(t)$-LC. Section 3 formally introduces the reader to number walls, but for the sake of this introduction, the following definitions will suffice. \\
 
 \noindent A matrix is \textbf{Toeplitz} if every entry on a given diagonal is equal. A Toeplitz matrix is thus determined by the sequence comprising its first column and row. The number wall of a sequence $\mathbf{S}=(s_i)_{i\in\Z}$ is a two dimensioanl array containing the determinants of all the possible finite Toeplitz matrices generated by consecutive elements of $\mathbf{S}$. Specifically, the entry in row $m$ and column $n$ is the determinant of the $(m+1)\times (m+1)$ Toeplitz matrix with top left entry $s_{n}$. Zero entries in number walls can only appear in square shapes, called \textbf{windows}. \\

\noindent The following theorem, adapted from \cite[Theorem 2.2]{Faustin}, is the most elementary example of a much more comprehensive dictionary that translates the Diophantine problem underpinning $t$-LC to combinatorics on number walls. 

\begin{theorem}\label{growth}
    Let $\Theta(t)=\sum_{i=1}^\infty s_it^{-i}\in{\F_q}\left(\!\left(t^{-1}\right)\!\right)$, $l$ be a natural number and $f:\mathbb{N}\to\mathbb{R}^+$ be a positive monotonic increasing function. Then following are equivalent: 
\begin{enumerate}
    \item The inequality \[\inf_{N(t)\in\F_q[t]\backslash\{0\}}f(|N(t)|)\cdot |N(t)|\cdot |\langle N(t)\cdot \Theta(t) \rangle|\cdot |N(t)|_t\ge q^{-l}.\] is satisfied;
    \item For every $m,n\in\mathbb{N}$, the square portion of size $l+\lfloor\log_q(f(q^{m+n}))\rfloor-1$ with top left corner on row $m$ and column $m+n+1$ in the number wall generated by the sequence $\mathbf{S}=(s_i)_{i\ge1}$ over $\F_q$ is not entirely comprised of zero entries.
\end{enumerate}
\end{theorem}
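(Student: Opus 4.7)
The plan is to translate the Diophantine inequality in condition (1) into a condition about the vanishing of a family of Hankel (equivalently, up to sign, Toeplitz) determinants built from the sequence $(s_i)$, which are by definition entries of the number wall of $S$. Concretely, write any non-zero $N(t) \in \F_q[t]$ uniquely as $N(t) = t^k \tilde N(t)$ with $\tilde N(0) \ne 0$ and $\deg \tilde N = n$, so that $|N| = q^{k+n}$ and $|N|_t = q^{-k}$. Expanding $\tilde N(t) = \sum_{j=0}^n \tilde c_j t^j$ and computing $N(t)\Theta(t)$ coefficient-wise shows that the coefficient of $t^{-r}$ in its fractional part equals $\sum_{j=0}^n \tilde c_j s_{j+r+k}$ for every $r \ge 1$. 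Hence $|\langle N\Theta\rangle| \le q^{-M}$ if and only if the coefficient vector $(\tilde c_0, \dots, \tilde c_n)$, whose first entry is nonzero, solves the linear system $\sum_j \tilde c_j s_{j+r+k} = 0$ for $r = 1, \dots, M$.

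Substituting the above valuations into the inequality $f(|N|) \cdot |N| \cdot |N|_t \cdot |\langle N\Theta\rangle| \ge q^{-l}$ and taking $q$-adic logarithms converts (1) into the assertion: for every admissible triple $(k, n, \tilde N)$, the associated $M$ is bounded by an explicit function of $k, n, l$ and $f$. Its negation thus asserts that there is such a triple for which the linear system above has a solution with nonzero first coordinate for some $M$ exceeding the threshold. By a standard rank or Cramer argument, the existence of such a solution is equivalent to the simultaneous vanishing of every $(n+1) \times (n+1)$ minor of the relevant Hankel matrix $(s_{j+r+k})$ that contains its first column. After matching indices, these minors are exactly the entries of the number wall of $S$ occupying an $L \times L$ rectangle with top-left corner at row $k$ and column $k + n + 1$, where $L = l + \lfloor \log_q f(q^{k+n}) \rfloor - 1$. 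Setting $m := k$ then brings the picture into exact agreement with condition (2).

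Finally, one must promote this rectangular block of vanishing entries to a genuine window of the number wall and run the same argument in reverse. Both directions invoke the structural lemmas developed in Section~3, which pin down the fact that zero entries of a number wall can occur only as square windows with a prescribed border and recurrence structure. This is where the main obstacle lies: the naive rank-deficiency argument produces only a vanishing pattern of the correct size, and it is the combinatorics of number walls that upgrades this pattern to a genuine window, and conversely allows one to extract an approximating polynomial $\tilde N$ from a window. A secondary hazard is the routine but error-prone bookkeeping of floors and of the sign flip between the Hankel and Toeplitz conventions; I would sanity-check the indexing first in the degenerate case $f \equiv 1$, where the claim reduces to a well-known correspondence between Laurent-series approximation exponents and entries of the number wall, before writing the general argument.
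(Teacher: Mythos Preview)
Your overall strategy is the paper's: translate the inequality into a kernel condition on a rectangular Hankel matrix built from $(s_i)$, relate the resulting vanishing determinants to number-wall entries, and invoke the Square Window Theorem. Two points in the execution are wrong, however.

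First, the index identification is inverted. With $N=t^k\tilde N$ and $\deg\tilde N=n$, the number-wall entry $W_{m,c}(S)=\det T_S(c,m)$ is an $(m{+}1)\times(m{+}1)$ determinant, so the \emph{row} index records the size of the minor, which is governed by $\deg\tilde N$. The window therefore sits on row $n$, not row $k$; your ``set $m:=k$'' places it on the wrong row. Second, the $(n{+}1)\times(n{+}1)$ minors of the rectangular Hankel matrix $(s_{j+r+k})$ do \emph{not} fill an $L\times L$ block of the number wall. That matrix has exactly $n{+}1$ columns, so every maximal minor uses all of them; the consecutive-row minors are the determinants $\det H_S(k{+}r,n)$, which are all of the \emph{same} size and hence all lie on the single row $n$ of the wall. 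The paper does something different: it observes that the \emph{nested} square Hankel matrices $H_\Theta(k{+}1,i)$ for $n\le i\le n{+}L{-}1$ are all singular (each inherits the same linear relation among its first $n{+}1$ columns), and these determinants of \emph{increasing} size populate a diagonal segment of the wall from $(n,\,k{+}n{+}1)$ to $(n{+}L{-}1,\,k{+}n{+}L)$, which the Square Window Theorem then upgrades to a window with the stated top-left corner.

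For the converse $(2)\Rightarrow(1)$ you also need more than the Square Window Theorem. A window gives you a diagonal of singular nested Hankel matrices, but to extract a polynomial $\tilde N$ of degree $n$ you must know that the \emph{first $n{+}1$ columns} of the largest one are already dependent. This descent is exactly the content of Lemma~\ref{famous}, applied inductively down the diagonal; without it you would only get an approximating polynomial of some uncontrolled larger degree, and the quantitative inequality would not follow.
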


 \noindent Above, $\log_q(x)$ is the base-$q$ logarithm of a real number $x$. 

\subsection{Structure of the Paper}
 \noindent The next section provides the proof of Theorem \ref{mainresult}, which does not rely on the aforementioned concept of a number wall. Then, Section \ref{Sect: NW_intro} introduces the reader to the fundamentals of number walls which are required to state the lemmas used in the proof of Theorems \ref{metric} and \ref{log2}. These lemmas are provided in Section \ref{Sect:Lemmata}, but their proofs are relegated to Section \ref{Sect: proofs}. Theorems \ref{log2} and \ref{metric} are then proved in Sections \ref{Sect:HD} and \ref{Sect:Khint} respectively, before an extensive theory of combinatorics on number walls is developed in Section \ref{Sect: proofs} that is used to prove the lemmata from Section \ref{Sect:Lemmata}. To assist the reader in grasping the new concepts introduced in this paper, a glossary is located after the bibliography. 
 
\subsubsection*{Acknowledgements} \noindent The Author is grateful to his supervisor Faustin Adiceam for his consistent support, supervision and advice throughout the duration of this project. Furthermore, the Author acknowledges the financial support of the Heilbronn Institute.

\section{From Counterexamples to $t$-LC to Counterexamples to $P(t)$-LC}

\noindent This section is dedicated to proving Theorem \ref{mainresult}; namely that a counterexample to $t$-LC implies a counterexample to $P(t)$-LC for any irreducible $P(t)$. The proof is completed over a generic field $\K$. Throughout the proof, $q$ is equal to the cardinality of $\K$ when $\K$ is finite, and to 2 otherwise. \\

\noindent Decomposing the polynomial $N(t)$ as $P(t)^k\cdot N'(t)$ for $N'(t)\in\K[t]$ coprime to $P(t)$, the equation central to $P(t)$-LC, namely (\ref{tlc}), is rephrased as \begin{equation}
\inf_{\substack{N'(t)\in\K[t]\backslash\{0\}\\k\ge0}} \left|\left\langle\Theta(t)\cdot N'(t) \cdot P(t)^k\right\rangle\right|\cdot|N'(t)|=0. 
\end{equation} \noindent Due to the infimum being over all polynomials $N'(t)$ and natural numbers $k$, it is simple to see that $N'(t)$ being coprime to $P(t)$ is not required. Throughout this section, $P(t)\in\F_q[t]$ is an irreducible polynomial, $d(P):=\deg(P(t))$ and every norm is taken with respect to the variable $t$. The following lemma is at the heart of the proof of Theorem \ref{mainresult}. Note that it differs from Theorem \ref{mainresult}, as the infimum in equation (\ref{inf5}) below is taken over all polynomials in $\K[P(t)]$, not in $\K[t]$.
   
   \begin{lemma}
   \label{lemma4} Let $l$ be a natural number and $\Theta(t):=\sum_{i=0}^\infty s_it^{-i}$ be a Laurent series in $\K\!\left(\!\left(t^{-1}\right)\!\right)$. Then,\begin{equation}     \inf_{\substack{N(t)\in\K[t]\backslash\{0\}\\k\ge0}} |N(t)|\cdot\left|\left\langle\Theta(t)\cdot N(t) \cdot t^k\right\rangle\right|=q^{-l} \label{inf2}
   \end{equation} if and only if \begin{equation}
       \inf_{\substack{N(P(t))\in\K[P(t)]\backslash\{0\}\\k\ge0}} |N(P(t))|\cdot\left|\left\langle\Theta(P(t))\cdot N(P(t)) \cdot P(t)^k\right\rangle\right|=q^{-d(P)\cdot l}.\label{inf5}
   \end{equation} 
\end{lemma}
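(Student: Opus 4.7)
The plan is to exploit the substitution $t \mapsto p(t)$, which induces a bijection $N(t) \mapsto N(p(t))$ from $\F_q[t]\backslash\{0\}$ onto $\F_q[p(t)]\backslash\{0\}$. Under this substitution, each pair $(N(t), k)$ indexing the infimum in equation (\ref{inf2}) corresponds to the pair $(N(p(t)), k)$ indexing the infimum in equation (\ref{inf5}), and the goal is to show that every factor appearing in (\ref{inf5}) is precisely the $m$-th power of its counterpart in (\ref{inf2}).

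First I would note the straightforward fact that $\deg_t(N(p(t))) = m\,\deg_t(N(t))$, so $|N(p(t))| = |N(t)|^m$. The more delicate step is to analyse the fractional part of $\Theta(p(t)) \cdot N(p(t)) \cdot p(t)^k$. The key observation is that this product is a Laurent series of the form $\sum_{i \ge -H} c_i\, p(t)^{-i}$ with $c_i \in \F_q$, since $\Theta(p(t))$ has this shape by construction and multiplication by the polynomial-in-$p(t)$ factor $N(p(t)) \cdot p(t)^k$ preserves it. For any such Laurent series, terms with $i \le 0$ are polynomials in $t$ and hence invisible to the $t$-adic fractional part, while terms with $i \ge 1$ have strictly negative $t$-degree and hence contribute in full. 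Therefore $\left\langle \Theta(p(t)) \cdot N(p(t)) \cdot p(t)^k \right\rangle$ is precisely the result of substituting $t \mapsto p(t)$ into $\left\langle \Theta(t) \cdot N(t) \cdot t^k \right\rangle$, and its $t$-adic absolute value is the $m$-th power of the latter's.

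Combining the two observations, for every admissible $(N(t), k)$ one has
$$\left|\left\langle \Theta(p(t)) \cdot N(p(t)) \cdot p(t)^k \right\rangle\right| \cdot |N(p(t))| = \left(\left|\left\langle \Theta(t) \cdot N(t) \cdot t^k \right\rangle\right| \cdot |N(t)|\right)^m.$$
Taking infima over $(N(t), k)$ on both sides and using that $x \mapsto x^m$ is strictly increasing on $\R^+$ (so commutes with the infimum), the equivalence between ``infimum $= q^{-l}$'' in (\ref{inf2}) and ``infimum $= q^{-lm}$'' in (\ref{inf5}) follows at once. I expect the main obstacle to be the careful justification of the fractional part identity: one must ensure that no cancellation among the various powers of $p(t)$ can push a negative-$t$-degree contribution into the polynomial part or vice versa. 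This is handled by the simple observation that each individual monomial $c_i\, p(t)^{-i}$ with $c_i \in \F_q$ sits cleanly in either the polynomial part of $\F_q((t^{-1}))$ (when $i \le 0$) or the fractional part (when $i \ge 1$), with no overlap.
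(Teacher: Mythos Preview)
Your proposal is correct and follows essentially the same underlying idea as the paper's proof: both rest on the observation that $\Theta(p(t))\cdot N(p(t))\cdot p(t)^k$, as a formal series in $p(t)^{-1}$, has exactly the same coefficient sequence as $\Theta(t)\cdot N(t)\cdot t^k$ as a series in $t^{-1}$, and that passing to the $t$-adic fractional part and absolute value simply rescales degrees by the factor $m$.

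The execution differs slightly. The paper expands the product explicitly as $\sum_{i\ge1}\bigl(\sum_{j=0}^{h}a_j s_{i+j+k}\bigr)p(t)^{-i}$, translates the bound $\lvert\langle\cdots\rangle\rvert\cdot\lvert N(p(t))\rvert\ge q^{-ml}$ into the statement that a certain Hankel-type linear system has no nontrivial solution, and then observes that this matrix condition is manifestly independent of the choice of $p(t)$. You instead argue directly at the level of degrees and the ultrametric inequality, avoiding the matrix detour entirely: since the terms $c_i\,p(t)^{-i}$ have pairwise distinct $t$-degrees $-mi$, no cancellation can occur and the absolute value of the fractional part is exactly the $m$-th power of its $p(t)=t$ counterpart. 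Your route is shorter and more conceptual; the paper's route has the minor advantage that the matrix formulation reappears later (in the proof of Theorem~\ref{growth}) and so is being set up here in passing.
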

 \begin{proof}[Proof of Lemma \ref{lemma4}]
 First, note that the function field norm (recall equation (\ref{absv})) can only take discrete powers of $q$. Therefore, the values of the infimum in equations (\ref{inf2}) and (\ref{inf5}) are greater than zero if and only if there is some polynomial $N(t)\in\K[t]$ and some $k\in\N$ that attains these values. Indeed, let $N(t)=\sum^{d(N)}_{j=0}n_jt^j$ be this nonzero polynomial in $\K[t]$, where $d(N):=\deg(N(t))$. That is,\vspace{-0.3cm}\begin{equation}|N(t)|\cdot \left|\left\langle\Theta(t)\cdot N(t) \cdot t^k\right\rangle\right|=q^{-l}.\vspace{-0.3cm}\nonumber\end{equation} \vspace{-0.3cm}\noindent Next, the definition of the function field norm implies \begin{equation}\label{t_to_P(t)_eqn_2}
     |N(t)|=q^{d(N(t))} \Leftrightarrow |N(P(t))|=q^{d(P)\cdot d(N)}.
 \end{equation}The goal is now to show that for any $n\in\N$, \begin{equation}
       \left|\left\langle\Theta(t)\cdot N(t) \cdot t^k\right\rangle\right|=q^{-l}\Leftrightarrow\left|\left\langle\Theta(P(t))\cdot N(P(t)) \cdot P(t)^k\right\rangle\right|=q^{-d(P)\cdot l}.\label{t_to_P(t)_eqn1}
 \end{equation}Indeed, equations (\ref{t_to_P(t)_eqn_2}) and (\ref{t_to_P(t)_eqn1}) combine to give \begin{align*}\left|\left\langle\Theta(P(t))\cdot N(P(t)) \cdot P(t)^k\right\rangle\right|\cdot |N(P(t))| &= \left(|N(t)|\cdot\left|\left\langle\Theta(t)\cdot N(t) \cdot t^k\right\rangle\right|\right)^{d(P)}\\&=q^{-d(P)\cdot l}, \end{align*} as required. \\
 
 \noindent To establish equation (\ref{t_to_P(t)_eqn1}), first note that \begin{align}
     \left\langle\Theta(P(t))\cdot N(P(t)) \cdot P(t)^k\right\rangle &= \left\langle\left(\sum^\infty_{i=1}s_iP(t)^{-i}\right)\cdot \left(\sum^{d(N)}_{j=0}n_jP(t)^j\right) \cdot P(t)^k\right\rangle\nonumber\\
     &=\left\langle\sum^\infty_{i=1}\left(\sum^{d(N)}_{j=0}n_j\cdot s_{i+j+k}\right)P(t)^{-i}\right\rangle,\label{t_to_P(t)_eqn_3}
 \end{align} implying that \begin{equation}
     \label{crucial} \deg\left(\left\langle\Theta(P(t))\cdot N(P(t)) \cdot P(t)^k\right\rangle\right)\in\left\{-i\cdot d(P): i\ge1\right\}.
 \end{equation}
 \noindent Finally, substituting $P(t)=t$ in equation (\ref{t_to_P(t)_eqn_3}) shows that $$\deg(\left\langle\Theta(P(t))\cdot N(P(t)) \cdot P(t)^k\right\rangle)=d(P)\cdot \deg(\left\langle\Theta(t)\cdot N(t) \cdot t^k\right\rangle),$$ which proves equation (\ref{t_to_P(t)_eqn1}) and completes the proof.
 \end{proof}
   \begin{proof}[Completion of the proof of Theorem \ref{mainresult}.]
   \noindent Consider a polynomial $N(t)\in\K[t]\backslash\{0\}$ expanded in base $P(t)$ as\begin{equation*}
       N(t)=\sum_{i=0}^{d_P(N)} N_i(t)P(t)^i
   \end{equation*} with $d_P(N):=\left\lfloor\frac{\deg{N(t)}}{d(P)}\right\rfloor$ and $\deg(N_i(t))\le d(P)-1$ for every $0\le i\le d(N)$. Let $n_{j,i}$ be the coefficient of $t^j$ in $N_i(t)$. Then, \begin{align}
       N(t)=\sum_{i=0}^{d_P(N)}\sum^{d(P)-1}_{j=0}n_{j,i}t^jP(t)^i
       =\sum^{d(P)-1}_{j=0}t^j\sum_{i=0}^{d(N)}n_{j,i}P(t)^i
       :=\sum^{d(P)-1}_{j=0}t^j \tilde{N}_j(P(t)),\label{jsum}
   \end{align} where $\tilde{N}_j(P(t)):=\sum^{d_P(N)}_{i=0}n_{j,i}P(t)^i\in\K[P(t)]$. Therefore, \begin{align}\inf_{\substack{N(t)\in\K[t]\backslash\{0\}\\k\ge0}} &|N(t)|\cdot\left|\left\langle\Theta(P(t))\cdot N(t) \cdot P(t)^k\right\rangle\right| \nonumber\\
   &= \inf ~\left|\left\langle\Theta(P(t))\cdot \left(\sum^{d(P)-1}_{j=0}t^j\tilde{N}_j(P(t)) \right)\cdot P(t)^k\right\rangle\right|\cdot\left|\sum^{d(P)-1}_{j=0}t^j\tilde{N}_j(P(t))\right|.\label{infinf}\end{align}
\noindent Above, and throughout the rest of the proof, the infimum is taken over natural numbers $k\ge0$ and all polynomials $\tilde{N}_0(P(t)),\dots,\tilde{N}_{d(P)-1}(P(t))\in\K[P(t)]$, ignoring the case where they are all zero. Using that for any finite set of Laurent series $L$, \[\sum_{\Theta(t)\in L}\langle\Theta(t)\rangle=\left\langle\sum_{\Theta(t)\in L}\Theta(t)\right\rangle,\]
   one has that\begin{align}
   (\ref{infinf})= \inf~\left|\sum^{d(P)-1}_{j=0}\left\langle\Theta(P(t))\cdot t^j\tilde{N}_j(P(t)) \cdot P(t)^k\right\rangle\right|\cdot\left|\sum^{d(P)-1}_{j=0}t^j\tilde{N}_j(P(t))\right|.\label{inf1}\end{align}
   
   \noindent As seen in Lemma \ref{lemma4}, for any $N(t)\in\K[t]$ one has  $$\deg\left(\left\langle\Theta(P(t))\cdot N(P(t)) \cdot P(t)^k\right\rangle\right)\in\{-d(P)\cdot i: i\in\N\}$$ and hence the coefficients for $t^{-1}$, $\dots$, $t^{-d(P)+1}$ are all zero. This implies that for any $0\le j \le d(P)-1$, \[\left\langle\Theta(P(t))\cdot t^jN(P(t)) \cdot P(t)^k\right\rangle=t^j\left\langle\Theta(P(t))\cdot N(P(t)) \cdot P(t)^k\right\rangle.\] Hence, \begin{equation}
(\ref{inf1})=\inf ~\left|\sum^{d(P)-1}_{j=0}t^j\left\langle\Theta(P(t))\cdot \tilde{N}_j(P(t)) \cdot P(t)^k\right\rangle\right|\cdot\left|\sum^{d(P)-1}_{j=0}t^j\tilde{N}_j(P(t))\right|\cdotp\label{inf3}
   \end{equation}
    If the degree of $\left\langle\Theta(P(t))\cdot N(P(t)) \cdot P(t)^k\right\rangle$ is $-i\cdot d(P)$, then for any $0\le j \le d(P)-1$ the degree of $t^j\left\langle\Theta(P(t))\cdot N(P(t)) \cdot P(t)^k\right\rangle$ is $-i\cdot d(P)+j\equiv j \mod d(P)$. This implies that the degree of each of the terms in $\sum^{d(P)-1}_{j=0}t^j\left\langle\Theta(P(t))\cdot \tilde{N}_j(P(t)) \cdot P(t)^k\right\rangle$ are all different and therefore \[\deg\left(\sum^{d(P)-1}_{j=0}t^j\left\langle\Theta(P(t))\cdot \tilde{N}_j(P(t)) \cdot P(t)^k\right\rangle\right)=\max_{0\le j <d(P)}\deg\left(t^j\left\langle\Theta(P(t))\cdot \tilde{N}_j(P(t)) \cdot P(t)^k\right\rangle\right).\] Similarly,  $$\deg\left(\sum^{d(P)-1}_{j=0}t^j\tilde{N}_j(P(t))\right)=\max_{0\le j <d(P)}\deg\left(t^j\tilde{N}_j(P(t))\right).$$ As a consequence, \begin{align}
   (\ref{inf3})&=\inf ~\max_{i\in\{0,\dots,d(P)-1\}}\left|t^i\tilde{N}_i(P(t))\right| \max_{j\in\{0,\dots,d(P)-1\}}\left|t^j\left\langle\Theta(P(t))\cdot \tilde{N}_j(P(t)) \cdot P(t)^k\right\rangle\right|\nonumber\\
   &=\inf ~\max_{i\in\{0,\dots,d(P)-1\}}q^i\left|\tilde{N}_i(P(t))\right|\max_{j\in\{0,\dots,d(P)-1\}}q^j\left|\left\langle\Theta(P(t))\cdot \tilde{N}_j(P(t)) \cdot P(t)^k\right\rangle\right|.\label{inf4}
   \end{align}
   
   \noindent Let $j'$ and $i'$ be the values of $j$ and $i$, respectively, that attain the maxima in (\ref{inf4}). Then, \begin{align*}
       (\ref{inf4})&\ge \min_{~h\in\{j',i'\}}q^h\left|\left\langle\Theta(P(t))\cdot \tilde{N}_h(P(t)) \cdot P(t)^k\right\rangle\right|q^h\left|\tilde{N}_h(P(t))\right|\\
       &\underset{(\ref{inf5})}{\ge} q^{-d(P)\cdot l+2h}.
   \end{align*}
\noindent This provides a lower bound for the value of (\ref{inf4}), namely the case where $h=0$. The fact that $\K[P(t)]\subset\K[t]$ combined with Lemma \ref{lemma4} show that this lower bound is attained, hence  \begin{align*}
       (\ref{inf4})= q^{-l\cdot d(P)}.
   \end{align*}
\noindent This completes the proof of Theorem \ref{mainresult}. 
   \end{proof}
\section{Combinatorial Properties of Number Walls} \label{Sect: NW_intro}
\noindent This section serves as an introduction to number walls and their connection to $t$-LC. Only the theorems crucial for this paper are mentioned, and the proofs can be found in the references. For a more comprehensive look at number walls, see \cite[Section 3]{Faustin},\cite[pp 85-89]{Conway1995TheBO}, \cite{Lunnon2009} and \cite{numbwall}.
\subsection{Fundamentals of Number Walls}
\subsubsection*{Hankel and Toeplitz Matrices}
\noindent The following definition provides the building blocks of a number wall.
\begin{definition} \label{Toe}
A matrix $(s_{i,j})$ for $0\le i\le n, 0\le j \le m$ is called \textbf{Toeplitz} (\textbf{Hankel}, respectively) if all the entries on a diagonal (on an anti-diagonal, respectively) are equal. Equivalently, $s_{i,j}=s_{i+1,j+1}$ ($s_{i,j}=s_{i+1,j-1}$, respectively) for any $n\in\mathbb{N}$ such that this entry is defined. 
\end{definition}
\noindent Given a doubly infinite sequence $\mathbf{S}:= (s_i)_{i\in\mathbb{Z}}$, natural numbers $m$ and $v$, and an integer $n$, define an $(m+1)\times (v+1)$ Toeplitz matrix $T_{\mathbf{S}}(m, v,n):= (s_{i-j+n})_{0\le i \le m, 0\le j \le v}$ and a Hankel matrix of the same size as $H_{\mathbf{S}}(m, v,n):= (s_{i+j+n})_{0\le i \le m, 0\le j \le v}$ :\\
\begin{align*}&T_{\mathbf{S}}(m,v,n):=\begin{pmatrix}
s_n & s_{n+1} & \dots & s_{n+v}\\
s_{n-1} & s_{n} & \dots & s_{n+v-1}\\
\vdots &&& \vdots\\
s_{n-m} & s_{n-m+1} & \dots & s_{n-m+v} 
\end{pmatrix},~H_{\mathbf{S}}(m,v,n):=\begin{pmatrix}
s_n & s_{n+1} & \dots & s_{n+v}\\
s_{n+1} & s_{n+2} & \dots & s_{n+v+1}\\
\vdots &&& \vdots\\
s_{n+m} & s_{n+m+1} & \dots & s_{n+m+v}
\end{pmatrix}.&\end{align*} If $v=m$, these are shortened to $T_{\mathbf{S}}(m,n)$ and $H_{\mathbf{S}}(m,n)$, respectively. The Laurent series $\Theta(t)=\sum^\infty_{i=1}s_it^{-1}\in\F_{q}\left(\!\left(t^{-1}\right)\!\right)$ is identified with the sequence $\mathbf{S}=(s_i)_{i\ge1}$. Accordingly, define $T_\Theta(m,v,n)=T_{\mathbf{S}}(m,v,n)$ and $H_\Theta(m,v,n):=H_{\mathbf{S}}(m,v,n)$.\\

\noindent By elementary row operations, it holds that \begin{equation}
    \det(H_{\mathbf{S}}(m,n))=(-1)^{\frac{m(m+1)}{2}} \det(T_{\mathbf{S}}(m,n+m)).\label{hanktoe}
\end{equation}
\subsubsection*{The Definition of a Number Wall}
\begin{definition}\label{nw}
Let $\mathbf{S}=(s_i)_{i\in\mathbb{Z}}$ be a doubly infinite sequence over a field $\F_q$. The \textbf{number wall} of the sequence $\mathbf{S}$ is defined as the two dimensional array of numbers $W_q(\mathbf{S})=(W_{m,n}(\mathbf{S}))_{n,m\in\mathbb{Z}}$ with \begin{equation*}
    W_{m,n}(\mathbf{S})=\begin{cases}\det(T_{\mathbf{S}}(m,n)) &\textup{ if } m\ge0,\\
    1 & \textup{ if } m=-1,\\
    0 & \textup{ if } m<-1. \end{cases}
\end{equation*}  
\end{definition} 
\noindent In keeping with standard matrix notation, $m$ increases as the rows go down the page and $n$ increases from left to right. One defines the number wall of a Laurent series $\Theta(t)\in \F_q\!\left(\!\left(t^{-1}\right)\!\right)$, denoted $W_q(\Theta(t))$, in the natural way by associating $\Theta(t)$ to the sequence of its coefficients.\\
\subsubsection*{Windows in a Number Wall}
\noindent A key feature of number walls is that the zero entries can only appear in specific shapes:
\begin{theorem}[Square Window Theorem]\label{window}
Zero entries in a number wall can only occur within windows; that is, within
square regions with horizontal and vertical edges.
\end{theorem}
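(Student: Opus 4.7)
The plan is to establish the theorem by combining Dodgson's condensation with a boundary analysis of connected zero regions. The key tool is the five-point cross identity for number walls,
$$W_{m,n}(S)^2 = W_{m-1,n}(S)\, W_{m+1,n}(S) + W_{m,n-1}(S)\, W_{m,n+1}(S),$$
which follows directly from Dodgson's identity applied to the Toeplitz matrix $T_S(n,m)$ of Definition \ref{Toe}: a short calculation identifies each of the six Toeplitz submatrices appearing in Dodgson's formula with a number-wall entry at one of the positions of the cross. This identity is the sole combinatorial engine of the argument.

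Granting this identity, I would first show that any maximal connected region $R$ of zero entries in $W(S)$ is a rectangle with sides parallel to the coordinate axes. Suppose, towards a contradiction, that the boundary of $R$ contains a ``staircase step''; concretely, suppose there exist adjacent zero entries at $(m,n)$, $(m,n+1)$, $(m+1,n+1)$ while the cell $(m+1,n)$ carries a non-zero value. Applying the cross identity at $(m+1,n+1)$ forces $W_{m+1,n}\, W_{m+1,n+2}=0$; since $W_{m+1,n}\neq 0$ by assumption, this propagates an additional zero to $(m+1,n+2)$, and iterating the argument either produces an infinite diagonal chain of zeros (incompatible with a finite maximal region) or else fills in the staircase step by step. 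Either way, one concludes that $R$ is a genuine axis-aligned rectangle.

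It then remains to show that this rectangle is a square. I would do this by a frame analysis: let $F$ denote the border of non-zero entries immediately surrounding $R$ (non-zero by the maximality of $R$), and apply the five-point identity at each frame entry whose inner neighbour belongs to $R$ and hence equals zero. This yields a system of multiplicative relations among the entries of $F$ whose solvability is expected to force the height and width of the rectangle to coincide. The main obstacle will be precisely this cascade step: one needs to isolate the correct invariant---essentially a multiplicative telescoping of ratios of consecutive frame entries along each side of $R$---and verify that its compatibility as one turns each of the four corners of $R$ is possible only when the two side lengths agree. This is the strategy carried out in \cite{Lunnon2009} and \cite[pp.~85--89]{Conway1995TheBO}, to which I would ultimately appeal for the detailed frame-ratio computation.
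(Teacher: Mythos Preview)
The paper does not actually prove this theorem: its entire proof is the sentence ``See \cite[page 9]{numbwall}.'' Your overall strategy---driving everything through the five-point cross identity coming from Dodgson condensation---is indeed the engine used in that reference and in \cite{Lunnon2009}, so at the level of method you are aligned with the sources the paper points to.

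That said, your rectangle step has a real gap. The first application of the cross identity at $(m+1,n+1)$ does force $W_{m+1,n+2}=0$, as you say. But the next application, say at $(m+1,n+2)$, only gives
\[
0 = W_{m,n+2}\,W_{m+2,n+2} + W_{m+1,n+1}\,W_{m+1,n+3} = W_{m,n+2}\,W_{m+2,n+2},
\]
an \emph{alternative} rather than a forced direction of propagation. Your dichotomy (``infinite diagonal chain'' versus ``fills in the staircase'') neither exhausts the branching that arises nor is either branch shown to yield a contradiction; in particular, nothing in the hypotheses guarantees that the maximal zero region $R$ is finite (a sequence satisfying a linear recurrence of order $r$ has $W_{m,n}=0$ for all $m\ge r$, for example), so ``incompatible with a finite maximal region'' is not a valid conclusion. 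The argument in \cite{numbwall} is organised differently: one anchors at the top of the region using that row $-1$ is identically $1$, then tracks the leftmost and rightmost zero in each successive row and uses the cross identity together with a rank argument on the underlying Hankel matrices to pin these endpoints down exactly. Your second step---forcing squareness via the multiplicative frame relations and then deferring the telescoping computation to \cite{Lunnon2009} and \cite{Conway1995TheBO}---is fine in outline and is exactly where the paper itself sends the reader.
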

\begin{proof}
See \cite[page 9]{numbwall}.
\end{proof}

\noindent For the problems under consideration, the zero windows carry the most important information in the number wall. 

\subsubsection*{The Inner Frame of a Window}

\noindent Theorem \ref{window} implies that the border of a window is the boundary of a square with nonzero entries. This motivates the following definition: 
\begin{definition}\label{frame}
The entries of a number wall surrounding a window are referred to as the \textbf{inner frame}.
\end{definition}
\noindent The entries of the inner frame are extremely well structured:
\begin{theorem}\label{ratio ratio}
The inner frame of a window with side length $l\ge1$ is comprised of 4 geometric sequences. These are along the top, left, right and bottom edges and they have ratios $P,Q,R$ and $S$ respectively with origins at the top left and bottom right. Furthermore, these ratios satisfy the relation \[\frac{PS}{QR}=(-1)^{l}.\]
\end{theorem}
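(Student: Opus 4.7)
The plan is to base the proof on a single determinantal tool, the \emph{cross rule}, which follows from the Desnanot--Jacobi identity applied to the $(m+2)\times(m+2)$ Toeplitz matrix $T_S(n,m+1)$: for every entry of the number wall,
\[
W_{m,n}^2 \;=\; W_{m-1,n}\,W_{m+1,n} \;+\; W_{m,n-1}\,W_{m,n+1}.
\]
This single rule supplies everything needed for both halves of the theorem, but in two different modes. A purely local application near the window will force the four geometric progressions, whereas an iterated or globalised version of the same identity will produce the sign $(-1)^l$.

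For the first claim I would fix the top edge of the inner frame and label its entries $T_0,T_1,\dots,T_{l+1}$ at positions $(m_0-1,\,n_0-1+i)$, where $(m_0,n_0)$ is the top-left zero of the window. Applying the cross rule at $T_i$ gives
\[
T_i^2 \;=\; W_{m_0-2,\,n_0-1+i}\cdot W_{m_0,\,n_0-1+i} \;+\; T_{i-1}\,T_{i+1}.
\]
The entry $W_{m_0,n_0-1+i}$ sits in the top row of the window whenever $1\le i\le l$, hence vanishes, and therefore $T_i^2=T_{i-1}T_{i+1}$. Since inner-frame entries are nonzero by Theorem~\ref{window}, this forces $T_{i+1}/T_i$ to be a constant $P$ independent of $i$. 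Running the same argument along the left, right, and bottom edges produces ratios $Q,R,S$, and the natural choice of traversal direction places the origins at the top-left for $P,Q$ and at the bottom-right for $R,S$.

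For the identity $PS/(QR)=(-1)^l$ the local cross rule is insufficient: closing the multiplicative loop around the frame via the four geometric progressions only yields $(PS/(QR))^{l+1}=1$, which is strictly weaker (in particular, over $\mathbb{F}_q$ with odd $l$ it leaves a sign ambiguity). The sign must instead be extracted from a global determinantal computation. The plan is to apply a higher-order Desnanot--Jacobi or Sylvester identity to the $(l+2)\times(l+2)$ Toeplitz sub-matrix whose four corner entries coincide with the four corners of the inner frame. Because every interior entry of this sub-matrix lies in the window and so vanishes, its determinant factorises through the corner data; matching this factorisation against the expressions for the corners in terms of $P,Q,R,S$ then yields the claimed identity, with the factor $(-1)^l$ arising from the parity of the permutation that reduces the sub-matrix to canonical block-triangular form.

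The main obstacle is sign and indexing bookkeeping. Producing $(-1)^l$ rather than a misplaced $(-1)^{l+1}$ or $+1$ requires careful accounting of the parity of the matrix size used in the global identity, the signs arising from Laplace expansion of the $(l+2)\times(l+2)$ minor, and the Hankel--Toeplitz conversion sign recorded in equation~(\ref{hanktoe}). Verifying that these contributions combine to be exactly those needed to make the four local geometric recursions compatible at the four corners of the inner frame is where the bulk of the technical work lies.
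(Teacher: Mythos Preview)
The paper does not prove this theorem in the text: its proof consists solely of the reference ``See \cite[Page 11]{numbwall}''. So there is no in-paper argument to compare against, and your proposal must be judged on its own merits and against the standard argument in Lunnon's paper.

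Your treatment of the first claim is correct and is essentially the standard proof: the cross rule $W_{m,n}^2=W_{m-1,n}W_{m+1,n}+W_{m,n-1}W_{m,n+1}$ applied at an inner-frame entry adjacent to the window kills one of the two products, leaving $T_i^2=T_{i-1}T_{i+1}$, and non-vanishing of the inner frame then forces a geometric progression. This part is complete.

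For the relation $PS/(QR)=(-1)^l$ you have correctly diagnosed that closing the loop around the frame only gives $(PS/(QR))^{l+1}=1$, which is too weak, and that a global determinantal identity is required. But what follows is a plan rather than a proof: you name ``a higher-order Desnanot--Jacobi or Sylvester identity'' applied to the $(l+2)\times(l+2)$ Toeplitz block, assert that vanishing of the interior entries makes its determinant factor through corner data, and then say that matching signs ``is where the bulk of the technical work lies''. That technical work is precisely the content of the theorem, and you have not carried it out. In particular, the $(l+2)\times(l+2)$ Toeplitz matrix whose interior corresponds to the window does \emph{not} have vanishing interior entries---its entries are sequence values $s_i$, not number-wall entries---so the factorisation you describe does not arise in the way you suggest. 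The identity one actually needs is the one recorded in the paper as the Second Frame Constraint in Theorem~\ref{FC}, namely $D_k=(-1)^{lk}B_kC_k/A_k$; specialising this at $k=1$ (or comparing two adjacent values of $k$) and inserting the geometric-progression form of $A_k,B_k,C_k,D_k$ gives $PS/(QR)=(-1)^l$ directly. Without either deriving that constraint or executing an equivalent Sylvester computation, the second half of your argument remains a gap.
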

\begin{proof}
   See \cite[Page 11]{numbwall}.
\end{proof}
\noindent See Figure \ref{basicwondow} for an example of a window of side length $l$. For $i\in\{0,\dots,l+1\}$, the inner frame is labelled by the entries $A_i,B_i,C_i,D_i$. The ratios of the geometric sequences comprising the inner frame are labelled as $P,Q,R$ and $S$.
\begin{figure}[H]
\centering
\includegraphics[width=0.3\textwidth]{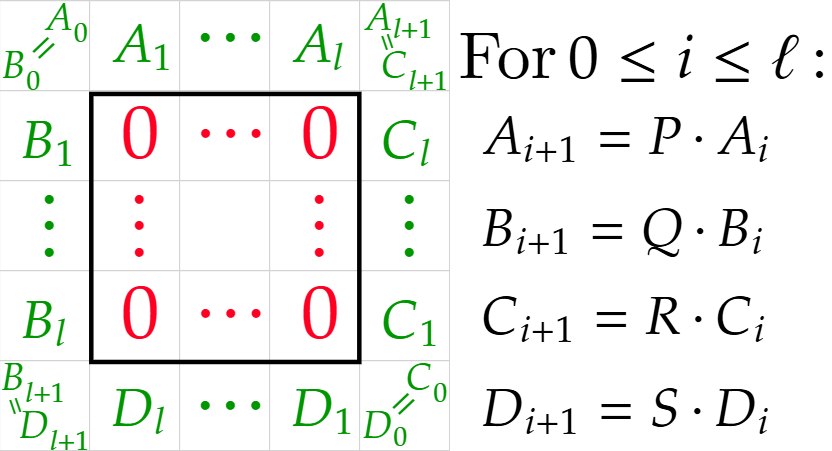}
\caption{Illustration of a window in a number wall. The window is in red and the inner frame is in blue.}
\label{basicwondow}
\end{figure}
\noindent Now that the fundamental properties of number walls have been stated, their connection to $t$-LC is proved.
\subsection{Proof of Theorem \ref{growth}} 
\begin{lemma}\label{famous} Let $H = (s_{i+j-1})$ with $1\le i,j\le n$
be an $m \times m$ Hankel matrix with entries in $\F_q$. Assume that
the first $r$ columns of $H$ are linearly independent but that the first $r + 1$ columns are linearly dependent (here, $1 \le r \le m-1$). Then the principal minor of order r, that is, $\det(s_{i+j-1})_{1\le i,j\le r}$, does not vanish.
    \end{lemma}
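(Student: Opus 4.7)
The plan is to argue by contradiction: assume that the principal minor $\det(s_{i+j-1})_{1 \le i,j \le r}$ vanishes and then derive that the first $r$ columns of $H$ are linearly dependent, contradicting the hypothesis. Denote by $C_j = (s_j, s_{j+1}, \ldots, s_{j+m-1})^T$ the $j$-th column of $H$. The assumed singularity of the top-left $r\times r$ block produces coefficients $c_1, \ldots, c_r \in \F_q$, not all zero, such that the vector $w := \sum_{j=1}^r c_j C_j \in \F_q^m$ has first $r$ entries equal to zero; explicitly, $w_i = \sum_{j=1}^r c_j s_{i+j-1} = 0$ for every $i \in \{1, \ldots, r\}$.

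Next, exploit the hypothesis that the first $r+1$ columns of $H$ are linearly dependent while the first $r$ are independent: this forces a unique relation $C_{r+1} = \sum_{k=1}^r a_k C_k$, which unpacks entry-by-entry into the linear recurrence $s_{r+i} = \sum_{k=1}^r a_k\, s_{k+i-1}$, valid for every $i \in \{1, \ldots, m\}$. The goal is to upgrade the partial vanishing $w_1 = \cdots = w_r = 0$ to the full vanishing $w_1 = \cdots = w_m = 0$, which would contradict the linear independence of $C_1, \ldots, C_r$ and complete the proof.

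This upgrade is a straightforward induction on $i \ge r+1$. For $i = r + l$ with $l \ge 1$, substitute the recurrence into the definition of $w_i$ and swap the order of summation:
\[
w_{r+l} \;=\; \sum_{j=1}^r c_j\, s_{r+l+j-1} \;=\; \sum_{k=1}^r a_k \sum_{j=1}^r c_j\, s_{(k+l-1)+j-1} \;=\; \sum_{k=1}^r a_k\, w_{k+l-1}.
\]
The indices $k+l-1$ appearing on the right all lie strictly below $r+l$, so the inductive hypothesis yields $w_{r+l} = 0$. Iterating up to $l = m-r$ gives $w = 0 \in \F_q^m$, which is the desired contradiction.

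The argument is essentially the propagation of the minimal linear recurrence supplied by the column dependence, made possible by the Hankel shift structure. The only subtle point—and the main piece of bookkeeping—is verifying that the recurrence $s_{r+i} = \sum a_k\, s_{k+i-1}$ is applicable for every shifted index arising during the induction, which is guaranteed because the largest such index is bounded by $r+m-1$ and the recurrence is stated as a genuine column identity of the full $m \times m$ matrix $H$.
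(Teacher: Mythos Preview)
Your argument is correct. The contradiction is set up cleanly, the Hankel shift structure is used exactly where needed, and the range-check in the induction step is valid: when you apply the recurrence to $s_{r+l+j-1}$ you are invoking it with shift parameter $i=l+j-1$, and since $1\le j\le r$ and $1\le l\le m-r$ this parameter stays within $\{1,\dots,m-1\}\subset\{1,\dots,m\}$, so the column identity $C_{r+1}=\sum_k a_k C_k$ genuinely covers every instance you need.

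As for comparison with the paper: the paper does not actually prove this lemma but defers to \cite[Lemma~2.3]{Faustin} and \cite[Chapter~10]{Theoryofmat}. Your write-up is therefore a self-contained replacement for those citations. The argument you give is the classical one (it is essentially what appears in Gantmacher's treatment of Hankel forms), so there is no meaningful methodological difference to discuss---you have simply supplied what the paper chose to outsource.
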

\begin{proof}
   See \cite[Lemma 2.3]{Faustin} or \cite[Chapter 10]{Theoryofmat}. \end{proof}
\begin{proof}[Proof of the Implication $1\Rightarrow2:$ in Theorem \ref{growth}]

\noindent The proof is by contraposition. Let $N(t)=M(t)\cdot t^k\in\F_q[t]$ be a nonzero polynomial with $M(t)$ coprime to $t$. Let $d(M)$ be the degree of $M(t):=\sum_{i=0}^{d(M)}m_it^i$. From the definition of the $t$-adic norm, \begin{align}f(|N(t)|)\cdot|N(t)|\cdot|\langle\Theta(t)\cdot N(t) \rangle|\cdot |N(t)|_t&=f(q^{d(M)+k})\cdot q^{d(M)}\cdot|\langle\Theta(t)\cdot M(t) \cdot t^k\rangle|. \label{twindow}
\end{align}  This quantity is strictly less than $q^{-l}$ if and only if \begin{equation}\label{coef}|\langle\Theta(t)\cdot M(t)\cdot t^k\rangle|<q^{-l-d(M)-b_f^{(q)}(d(M)+k)},\end{equation} where $b_f^{(q)}(d(M)+k)=\lfloor\log_q(f(q^{d(M)+k}))\rfloor$. This is equivalent to the coefficients of $t^{-i}$ in $\langle\Theta(t)\cdot M(t)\cdot t^k\rangle$ being zero for $i\in\{1,\dots,l+d(M)+b_f^{(q)}(d(M)+k)\}.$ These coefficients are given by $\sum_{j=0}^{d(M)} a_js_{i+j+k},$ and hence the inequality (\ref{coef}) can be written as the matrix equation \begin{equation}H_\Theta\left( s_M^{(q,f)}(l,k), d(M),k+1\right)\cdot\begin{pmatrix}
    m_0\\\vdots\\m_{d(M)}
\end{pmatrix}=\mathbf{0}~~~\text{where}~~~ s_M^{(q,f)}(l,k)=l+d(M)+b_f^{(q)}(d(M)+k)-1.\label{zeromat}\end{equation} This implies that $H_\Theta\left( s_M^{(q,f)}(l,k), d(M), k+1\right)$ has less than maximal rank, meaning the square matrix $H_\Theta\left(s_M^{(q,f)}(l,k),k+1\right)$ has zero determinant from the Hankel structure. In particular, $H_\Theta(i, k+1)$ is singular for $d(M)\le i \le s_M^{(q,f)}(l,k)$, since each of these matrices satisfies the same linear recurrence relation in the first $d(M)+1$ columns. \\
    
\noindent Let $r_M^{(q,f)}(l,k)=l+b_f^{(q)}(d(M)+k)$. Relation (\ref{hanktoe}) shows that the determinant of $H_\Theta(i, k+1)$ is, up to sign, equal to the determinant of $T_\Theta(i,k+1+i)$. Therefore, there is a diagonal of zeros from entry $(k+d(M)+1,d(M))$ to entry $(k+d(M)+r_M^{(q,f)}(l,k),~d(M)+r_M^{(q,f)}(l,k)-1)$ in the number wall of $\Theta(t)$. The Square Window Theorem (Theorem \ref{window}) implies that there is a window with side length at least $r^{(q,f)}_{M}(l,k)$ in row $d(M)$ and column $d(M)+k+1$. Setting $m:=d(M)$ and $n:=k$ completes the proof in this direction.\end{proof}
\begin{proof}[Proof of the Implication $2\Rightarrow1:$ in Theorem \ref{growth}]
\noindent Proceeding again by contraposition, assume that there is a window of size at least $l+b_f^{(q)}(m+n)$ in row $m$ and column $n+m+1$ for some $m,n\in\N$. Define $$r^{(q,f)}_{m,n}(l):=l+b_f^{(q)}(m+n).$$ The diagonal of this window corresponds to a sequence of nested singular square Toeplitz matrices, $T_\Theta(i, n+i+1)$ for $m\le i \le m+r^{(q,f)}_{m,n}(l)-1$. Hence, the matrices $H_\Theta(i,n+1)$ for $i$ in the same range are all singular. The largest of these Hankel matrices in this diagonal is singular and thus has linearly dependent columns. Since the second largest is also singular, Lemma \ref{famous} implies that the first $m+r^{(q,f)}_{m,n}(l)-1$ columns are linearly dependent. Furthermore, since the third largest matrix is also singular this implies that the first $m+r^{(q,f)}_{m,n}(l)-2$ columns are linearly dependent. A simple induction then shows that $H_\Theta(m+r^{(q,f)}_{m,n}(l)-1, m, n+1)$ has less than maximal rank. \\
    
\noindent Let then $\mathbf{a}=(a_0,\dots,a_m)^\top\in\F_q^{m+1}$ be a vector such that $H_\Theta(m+r^{(q,f)}_{m,n}(l)-1, m,n+1)\cdot \mathbf{a}=\mathbf{0}$. Define the polynomials $M(t)=\sum_{i=0}^ma_it^i$ and $N(t):=M(t)\cdot t^n$. By the same steps used to obtain (\ref{coef}) and (\ref{zeromat}), one has that \[f(|N(t)|)\cdot |N(t)|\cdot|\langle\Theta(t)\cdot N(t) \cdot t^n\rangle|\cdot |N(t)|_t<q^{-l}.\]
    \noindent Hence, a polynomial $N=M\cdot t^n$ with $m:=\deg(M)$ and $(M,t)=1$ being a solution to \[f(|N|)\cdot|N|\cdot\left|\langle N\cdot\Theta\right\rangle|\cdot|N|_t<q^{-l}\] is equivalent to there being a window of size $r^{(q,f)}_{m,n}(l)$ on row $m$ and column $m+n+1$. This completes the proof. \\
\end{proof}
\noindent Taking $f$ to be the constant function, the following corollary (originally from \cite{Faustin}) becomes clear. 
\begin{cor} \label{growcor}
Let $\Theta(t)=\sum^\infty_{i=1}s_it^{-i}\in\F_q\!\left(\!\left(t^{-1}\right)\!\right)$ be a Laurent series and $\mathbf{S}=(s_i)_{i\in\mathbb{N}}$ be the sequence of its coefficients. Then, $\Theta(t)$ is a counterexample to $t$-LC if and only if there exists an $l$ in the natural numbers such that $W_q(\textbf{S})$ has no windows of size larger than $l$.
\end{cor}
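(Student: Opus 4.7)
The plan is to deduce this corollary directly from Theorem \ref{growth} by specialising the growth function $f$ to the constant function equal to $1$. First I would observe that $\Theta(t)$ being a counterexample to $t$-LC (in the sense of Conjecture \ref{tlc}) is equivalent to the existence of a natural number $l$ such that
\[
\inf_{N(t)\in\F_q[t]\backslash\{0\}} |N(t)|\cdot|N(t)|_t\cdot|\langle N(t)\cdot\Theta(t)\rangle|\ge q^{-l},
\]
since the infimum lies in the discrete set $\{q^{-k}:k\in\Z\}\cup\{0\}$. This is simply the statement in item (1) of Theorem \ref{growth} applied with $f\equiv 1$.

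Next I would compute the associated bound on window sizes produced by Theorem \ref{growth}. With $f\equiv 1$ one has $\lfloor\log_q(f(q^{m+n}))\rfloor=0$ for every $m,n\in\N$, so item (2) of that theorem becomes the statement that for every $m,n\in\N$ there is no window of size $\ge l-1$ with top-left corner on row $m$ and column $m+n+1$ of the number wall generated by $S$. Since the rows and columns of the number wall are exhausted as $m,n$ range over the natural numbers, this is precisely the assertion that no window in $W(S)$ has size exceeding $l-1$.

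Putting the two halves together, the equivalence provided by Theorem \ref{growth} gives exactly the statement of the corollary: the existence of some $l\in\N$ for which the number wall of $S$ contains no window of size larger than $l$ is equivalent to $\Theta(t)$ being a counterexample to $t$-LC. There is no substantive obstacle here; the only point requiring care is the translation of the quantifier ``for every $m,n$'' in Theorem \ref{growth} into the global statement that window sizes in $W(S)$ are uniformly bounded, which follows at once from the indexing conventions of Definition \ref{nw}.
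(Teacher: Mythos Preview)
Your proposal is correct and follows exactly the route the paper takes: the paper simply remarks ``Taking $f$ to be the constant function, the following corollary becomes clear,'' and you have spelled out precisely that specialisation of Theorem~\ref{growth}. The minor off-by-one in passing from ``no window of size $\ge l-1$'' to ``no window of size exceeding $l-1$'' is harmless, since you are only asserting the existence of some bounding $l$.
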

\noindent One often wishes to \textit{build} sequences that have given properties in their number walls. Therefore, the portion of a number wall generated by a finite sequence, called a \textbf{finite number wall}, is of interest. The next subsection begins to develop a combinatorial theory on finite number walls.
\subsection{Finite Number Walls}\label{Sect: 3.3}
\noindent Whilst a number wall is defined for doubly infinite sequences, the definition can easily be extended to allow for finite sequences. \begin{definition} \label{def: fin_nw}
     A \textbf{finite number wall} is a two-dimensional infinite array defined exactly as a number wall is (Definition \ref{nw}), but where only the entries that are determined by $\mathbf{S}$ are given values and the remainder are left as variables. More precisely, given a finite sequence $\mathbf{S}$ in $\F_q$, the finite number wall $W_q(\mathbf{S})=(W_{m,n})_{m,n\in\Z}$ is defined as \begin{equation*}
    W_{m,n}(\mathbf{S})=\begin{cases}\det(T_{\mathbf{S}}(m,n)) &\textup{ if } m\ge0 \text{ and every entry of }T_{\mathbf{S}}(m,n)\text{ is defined},\\
     \text{an unknown variable }&\textup{ if } m\ge0 \text{ and not all entries of }T_{\mathbf{S}}(m,n)\text{ are defined},\\
    1 & \textup{ if } m=-1,\\
    0 & \textup{ if } m<-1. \end{cases}
\end{equation*}  
\end{definition} \noindent A finite number wall is illustrated below. For now, ignore the red square in Figure \ref{fin_nw}.

\begin{figure}[H]
    \centering
    \includegraphics[width=0.65\linewidth]{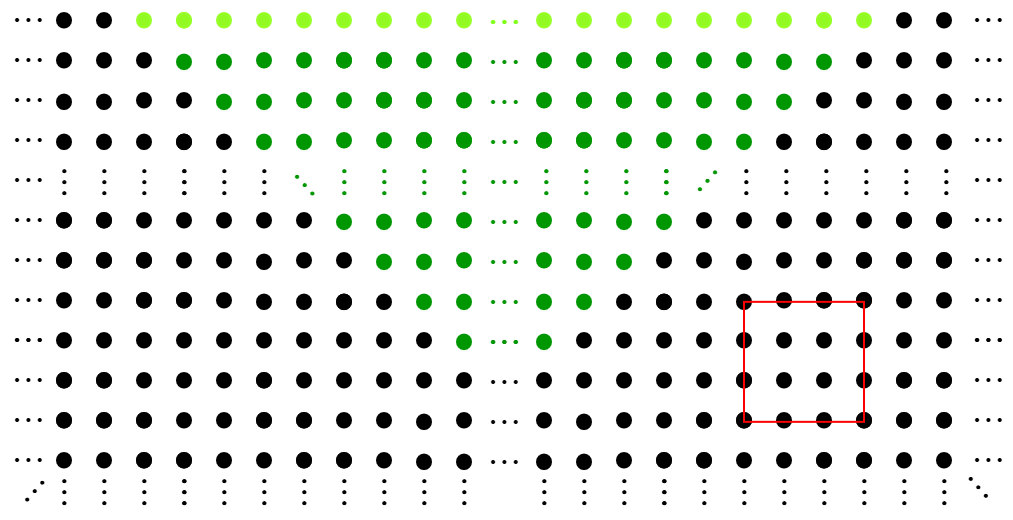}
    \caption{Each dot represents an entry in a number wall. The finite sequence (light green dots) that generates the finite number wall (whole picture) is on row zero. Each dot represents an entry in the finite number wall, with the dark green dots being those that are known explicitly and the black dots being those that are still variables.}
    \label{fin_nw}
\end{figure}  
\noindent To avoid visual clutter, figures will usually ignore the undefined entries of finite number walls.\\

\noindent It is simple to see that a finite sequence of length $r$ defines the portion of a finite number wall in the shape of an isosceles triangle with the greatest row index being equal to $\left\lfloor\frac{r-1}{2}\right\rfloor$. Therefore, it may seem paradoxical that a \textit{finite} number wall is actually an \textit{infinite} two-dimensional array instead of a finite triangular array. It is defined this way, since the methods in this paper often require one to extend a finite sequence that generates a finite number wall in such a way that the resulting number wall has zero entries in a given row and column whose entries are not currently defined. 

\subsubsection*{Dot Diagrams}

\noindent Being two-dimensional arrays over a finite field, number walls are an inherently visual object. Indeed, one often depicts a number wall over $\F_q$ by assigning each $x\in\F_q$ unique colour depending on its value. For an example of this, see the left image in Figure \ref{dotdiagram}.\\

\noindent Whilst generating number walls in this way can be insightful, it provides no help if one is trying to prove a result for number walls \textit{in general}. The introduction of \textbf{dot diagrams} remedies this by giving a way to visualise an arbitrary number wall of finite size, allowing for a clearer exposition during proofs. Each dot in the dot diagram represents an entry of the finite number wall, with the top row representing the zeroth row of the number wall. That is, the sequence that generated the finite number wall. \\

\noindent Figure \ref{fin_nw} is an example of a dot diagram. However, to improve the clarity of all future dot diagrams, all the values of the finite number wall that are undefined are not drawn. For example, in Figure \ref{fin_nw}, only the green parts would be drawn. The dot diagram in Figure \ref{dotdiagram} contains no information, but as the paper progresses dot diagrams are used to convey information about the structure and construction of finite number walls. 
\begin{figure}[H]
\centering
\includegraphics[width=0.6\linewidth]{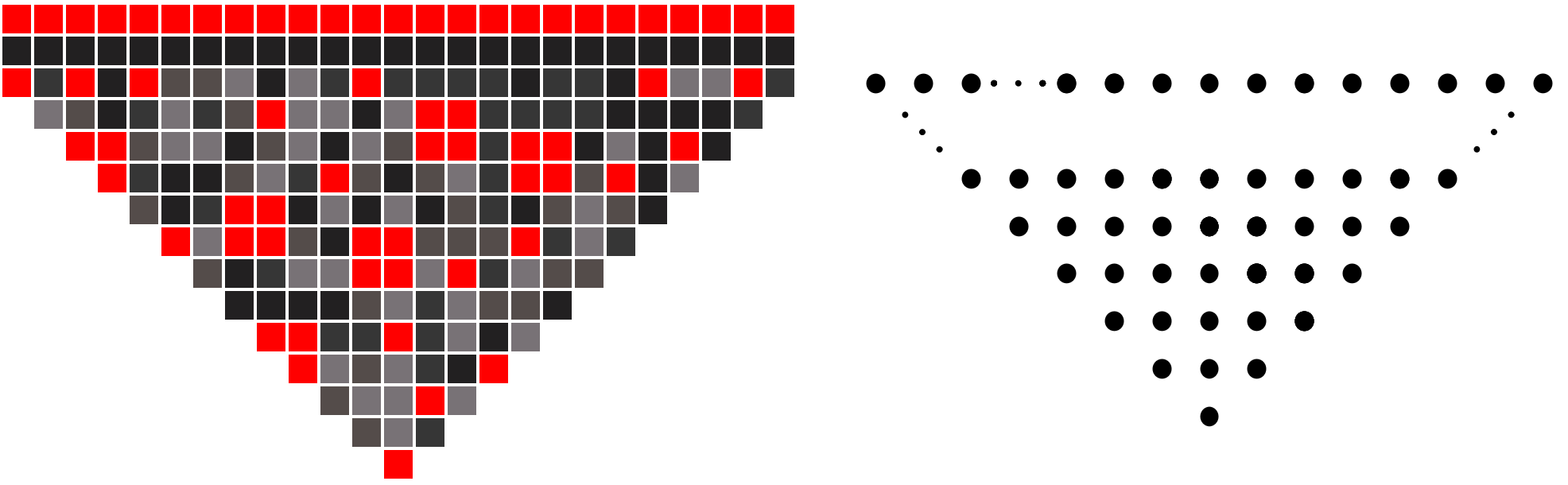}
\caption{\textbf{Left:} The number wall of a sequence of length 25 generated uniformly and randomly over $\F_5$. The zero entries are coloured in red, with the nonzero values assigned a shade of grey, with 1 being the darkest and 4 being the lightest. The top row (red) has index $-2$. \textbf{Right:} A blank dot diagram.}
\label{dotdiagram}
\end{figure}
\noindent All illustrations of \textit{specific} number walls have top row with index -2, as this illustrates all three cases of Definition \ref{nw}. This is different from dot diagrams, where the top row has index zero (unless stated otherwise).

\subsubsection*{Windows in Finite Number Walls}
\noindent Theorem \ref{window} states that every zero portion in a number wall is in the shape of a square. However, the left image in Figure \ref{dotdiagram} clearly shows windows (red) that are not square shaped. This is because Theorem \ref{window} is referring to \textit{infinite} number walls. When dealing with \textit{finite} sequences, it is possible that zero portions may appear in shapes other than a square. This can only happen on the edges of the defined part of the finite number wall. This does not violate Theorem \ref{window}, as in these cases \textit{any} continuation of the finite sequence that generated the finite number wall will eventually result in the non-square zero portion becoming square, even if that square has infinite side length. \\ 

\noindent The following three definitions categorise the types of window that can appear in a finite number wall. Throughout, a circle with a cross in it appearing in a dot diagram means the corresponding entry is nonzero. Recall that the inner frame of a window is the nonzero entries surrounding a square of zero entries.
\begin{definition}\label{complete}
    A \textbf{complete window} is a square of zeroes in a finite number wall where every entry of the inner frame is also contained within the defined part of the finite number wall. 
\end{definition}
\begin{figure}[H]
    \centering
    \includegraphics[width=0.5\linewidth]{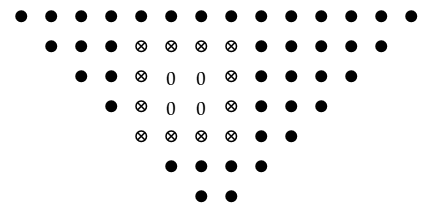}
    \caption{The two by two window above is complete, as the inner frame (denoted by the circles with crosses) is fully defined.}
\end{figure}
\begin{definition}\label{closed}  
A \textbf{closed window} is any zero portion of a finite number wall such that the ratios of the geometric sequences comprising its inner frame are calculable, but the inner frame itself is not completely within the defined part of the finite number wall. A \textbf{right-side closed window} is any non-square zero portion of the number wall such that the ratio of the inner frame on the right side is determined. A \textbf{left-side closed window} is defined similarly.\end{definition}
    \begin{figure}[H]
        \centering
        \includegraphics[width=1\linewidth]{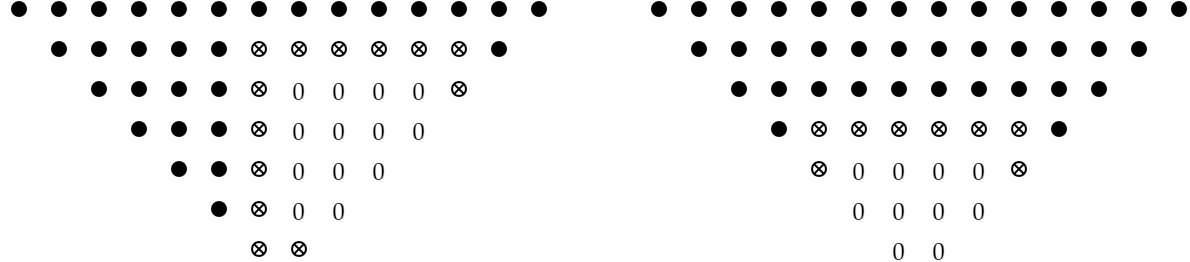}
        \caption{Two examples of closed windows. \textbf{Left:} at least two entries of each side of the inner frame are known, and therefore all four of the ratios of the geometric sequences comprising the inner frame are calculable. \textbf{Right:} all four ratios of the inner frames are calculable by Theorem \ref{ratio ratio}.}
        \label{fig:closedwindow}
    \end{figure}
    \vspace{-0.5cm}
    \begin{definition}\label{open_wind} An \textbf{open window} is any zero portion of the finite number wall that is not closed or complete. Similarly, a \textbf{right-side open window} (\textbf{left-side open window}, respectively) is a zero portion of a number wall where the ratio of the geometric sequence comprising the right-side (left-side, respectively) of the inner frame is not calculable. 
    \begin{figure}[H]
        \centering
        \includegraphics[width=1\linewidth]{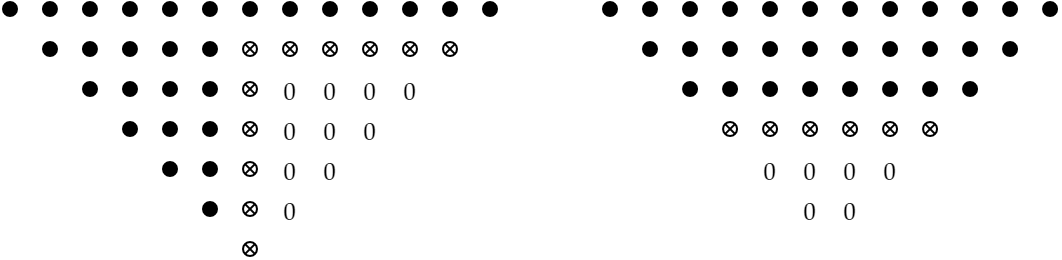}
        \caption{Two examples of open windows. The first example is both a left-side closed window and right-side open window. The second example is both a right-side open window and left-side open window. }
        \label{fig:openwindow}
    \end{figure}
   
\end{definition}

 \noindent A window is described as \textbf{incomplete} if it is not complete and it is not important to specify if it is an open window or a closed window. The word \textbf{window} will be used in isolation only when it does not matter if it is closed, open or complete.\\

 \noindent To help the reader commit these definitions to memory, links to them are found in the glossary. 

\subsubsection*{Windows and Square Portions}

\noindent The converse of Theorem \ref{growth} only demands that a particular square portion of the number wall should be zero; not that the infinite number wall contains a complete window of exactly the given size that begins exactly in the given place. To write about square portions more succinctly, the following notation is introduced:\\

\noindent \textbf{\hypertarget{sq_prt}{Notation}:} Given $l\in\N$ and $m,n\in\Z$, let $\Box(l,m,n)$ denote all the entries of a number wall (finite or infinite) within the square portion with side length $l$ and that has top left corner in row $m$ and column $n$.  \\

\noindent Two square portions \textbf{overlap} (or \textbf{intersect}) if they share entries. Recall that a finite number wall is an infinite array where only finitely many entries have been defined. Indeed, a square portion may then include (or be entirely comprised of) undefined variables. In fact, one often uses $\Box(l,m,n)$ to refer to a square portion that is completely outside of the defined portion of the finite number wall. See the red square in Figure \ref{fin_nw} for an example of this.\\

\noindent In general, there is no restriction on the values of entries inside a square portion. However, given a fixed (finite or infinite) sequence $\mathbf{S}$, some features in the number wall $W_q(\mathbf{S})$ (windows, inner frames, etc) force entries in certain square potions to take specific values. In particular, it is useful to have a way to describe if a (complete or incomplete) window that appears in $W_q(\mathbf{S})$ forces the values of a given square portion to be zero. Whence, the following definition.

 \begin{definition}\label{contain_def}
\noindent Let $\mathbf{S}$ be a fixed finite sequence over $\F_q$ and let $W_q(\mathbf{S})$ be its finite number wall. A (complete or incomplete) window $\mathcal{W}$ in $W_q(\mathbf{S})$ is said to \textbf{contain} a square portion $\Box(l,m,n)$ if for \textit{any} extension of $\mathbf{S}$ into a two-dimensional infinite sequence, $\mathcal{W}$ forces that every entry in $\Box(l,m,n)$ is equal to zero. 
\end{definition}
\noindent See Figure \ref{fig: contain} for an example illustrating Definition \ref{contain_def}. 
\begin{remark}
    At first glance, the use of the word ``contain'' in the above definition may seem misleading, since the square portion $\Box(l,m,n)$ may not be fully within the defined part of the finite number wall. However, this choice of terminology makes sense in view of Definition \ref{def: fin_nw} and the Square Window Theorem (Theorem \ref{window}). Indeed, if $\Box(l,m,n)$ is not fully within the defined part of the number wall but is contained within some incomplete window $\mathcal{W}$, then the entries of the finite number wall that are currently undefined but are within the minimal square containing $\mathcal{W}$ are forced to be zero. With this in mind, $\Box(l,m,n)$ is ``contained'' within $\mathcal{W}$. 
\end{remark}
 \begin{figure}[H]
    \centering
    \includegraphics[width=0.45\linewidth]{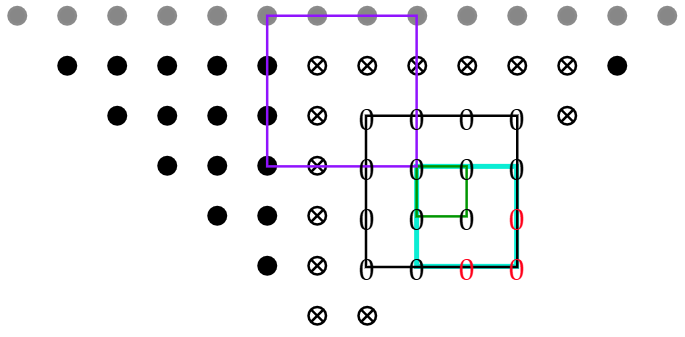}
    \caption{The incomplete window $\mathcal{W}$ is depicted by the entries denoted with 0 (in black). The parts of its inner frame that are in the defined part of the finite number wall are depicted by the crossed dots. Indeed, $\mathcal{W}$ contains the square portion (black square), since the red entries will be zero regardless of how the generating sequence (grey dots, top row) is continued. The same incomplete window also contains the blue and green square portions, despite being a different size and beginning in different positions. The purple square portion is not contained within $\mathcal{W}$, as it contains nonzero entries (the crossed dots).}
    \label{fig: contain}
\end{figure}

\subsubsection*{Extending Finite Sequences}
\noindent Many results in this paper revolve around counting the number of ways one can extend a given finite sequence to ensure the resulting sequence has certain properties in its number wall. To this end, the following notation is introduced which formalises the concept of continuing or extending a finite sequence.\\

\noindent \textbf{\hypertarget{seq_extend}{Notation}:} Given $l_{\mathbf{S}},l_R\in\N$ and given two sequences $\mathbf{S}=(s_i)_{0\le i \le l_{\mathbf{S}}}$ and $\textbf{R}=(r_i)_{0\le i \le l_R}$, define $ \mathbf{S}\oplus \mathbf{R}$  as the concatenation of $\mathbf{S}$ and $\textbf{R}$: \begin{equation}
        \textbf{S}\oplus\textbf{ R} = (k_i)_{0\le i \le l_{\mathbf{S}}+l_\textbf{R}+1} ~~~\text{ where }~~~ k_i=\begin{cases}s_i &\text{ if } 0\le i \le l_{\mathbf{S}}\\ r_{i-l_{\mathbf{S}}-1} &\text{ if } l_{\mathbf{S}} < i \le l_{\mathbf{S}}+l_\textbf{R}+1.\end{cases}
    \end{equation}

\subsubsection*{Minimal Generators}
\noindent Given a square portion $\Box(l,m,n)$ of a number wall, one is able to find the minimal length of a finite sequence $\mathbf{S}$ such that $W_q(\mathbf{S})$ has a window containing $\Box(l,m,n)$. 

\begin{definition}\label{hat} Let $\Box=\Box(l,m,n)$ be a square portion of a number wall defined as above. A \textbf{generator} of $\Box$ is a sequence $\mathbf{S}=(s_i)_{n_1\le i \le n_2}$ such that, regardless of the choice of additional values for $s_i$ when $i<n_1$ and $i>n_2$, the infinite number wall $W_q((s_i)_{i<n_1}\oplus \textbf{S}\oplus (s_i)_{i>n_2})$ has zeroes in every entry of $\Box(l,m,n)$. A \textbf{minimal generator} for $\Box$ is a generator for $\Box$ of minimal length. 
\end{definition}
\begin{figure}[H]
    \centering
    \includegraphics[width=0.5\linewidth]{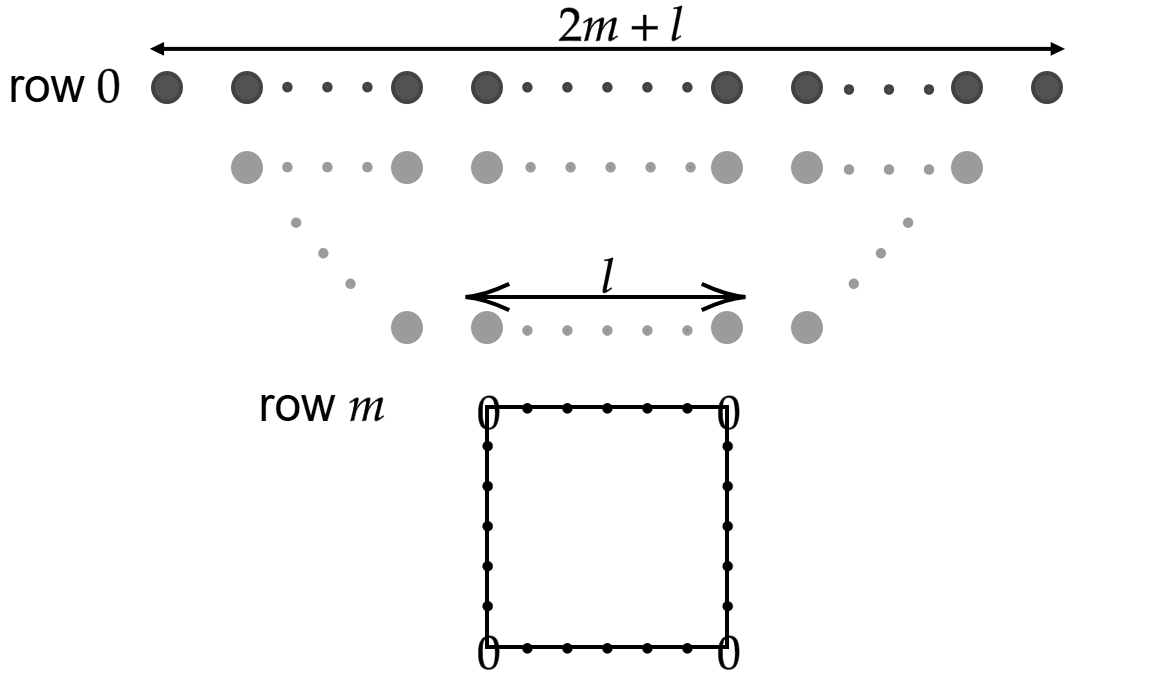}
    \caption{The square portion $\Box(l,m,n)$ is denoted by the black square and a minimal generator of $\Box(l,m,n)$ is the dots coloured in dark grey.}
\end{figure}
\noindent There are many different minimal generators for any given square portion $\Box(l,n,m)$. However, it is also clear that each of these minimal generators has length $2m+l$. Furthermore, for a sequence $\mathbf{S}=(s_i)_{1\le i \le r}$ such that $W_q(\mathbf{S})$ has a (complete or incomplete) window containing $\Box(l,n,m)$, a minimal generator of $\Box(l,m,n)$ is given by the subsequence $(s_i)_{n-m\le i \le n+m+l-1}$ (note here, that $n-m\ge1$ from the triangular shape of the defined part of the number wall, and that $n+m+l-1\le r$ from the assumption that $W_q(\mathbf{S})$ has a window containing $\Box$). \\

\noindent For the purposes of this paper, the specific choice of minimal generator is not important. Instead, given a square portion $\Box(l,m,n)$, the length and position of a minimal generator (which only depend on the choice of square portion) of $\Box(l,m,n)$ are what matter. 

\subsubsection*{Diagonals of Finite Number Walls}

\noindent Due to the triangular shape of the defined part of a finite number wall, it is natural for one to consider the entries that appear not on a specific row or column, but instead in a given diagonal. This is defined below. Recall that, for a finite sequence $\mathbf{S}$ over a field $\F_q$, the finite number wall $W_q(\mathbf{S})$ is still an infinite array, but only finitely many of the entries are known. Therefore, the entries of $W_q(\mathbf{S})$ are indexed as $(W_{m,n})_{n,m\in\Z}$.
\begin{definition}
    \label{diagonal} Let $\mathbf{S}=(s_i)_{1\le i \le r}$ be a finite sequence of length $r$, and let $W_q(\mathbf{S})=(W_{m,n}(\mathbf{S}))_{m,n\in\Z}$ be the number wall generated by $\mathbf{S}$ over $\F_q$. For $k\in\N$, the $k^\text{th}$-\textbf{diagonal} is all the elements of the finite number wall that have column index $k-i$ and row index $i$ for $0\le i \le \left\lfloor\frac{k-1}{2}\right\rfloor$.
\end{definition}

\section{Lemmata on the Size of Zero Entries in Number Walls}\label{Sect:Lemmata}

\noindent This section covers all the lemmata that are required for the proofs of Theorems \ref{metric} and \ref{log2}. The proofs of these lemmata relies on a deep theory of combinatorics on number walls, which is postponed to Section \ref{Sect: proofs}.

\subsection{Increasing the Size of an Open Window}
\noindent Open windows in finite number walls have the property that they can increase in size depending on how the finite sequence that generates the finite number wall is extended. The following lemma counts how many ways one can extend a given finite sequence such that a chosen open window in its finite number wall increases in size. 
\begin{prop}\label{wind_extend_lem}
    Let $l$ be a natural number and let $\mathbf{S}=(s_i)_{0\le i \le r}$ be a finite sequence over the field $\F_q$. Assume that the finite number wall of $\mathbf{S}$, denoted $W_q(\mathbf{S})$, has a right-side open window $\mathcal{W}$ of size $l$. Then, given another natural number $l'>l$, there exists a unique sequence $ \mathbf{S}'=(s_i)_{r+1\le i \le r+l'-l}$ over $\F_q$ such that $\mathcal{W}$ is extended to become a right-side open window of size $l'$ in the finite number wall $W_q( \mathbf{S}\oplus\mathbf{S}')$.
\end{prop}
\noindent Proposition \ref{wind_extend_lem} is illustrated below.

\begin{figure}[H]
    \centering
    \includegraphics[width=0.65\linewidth]{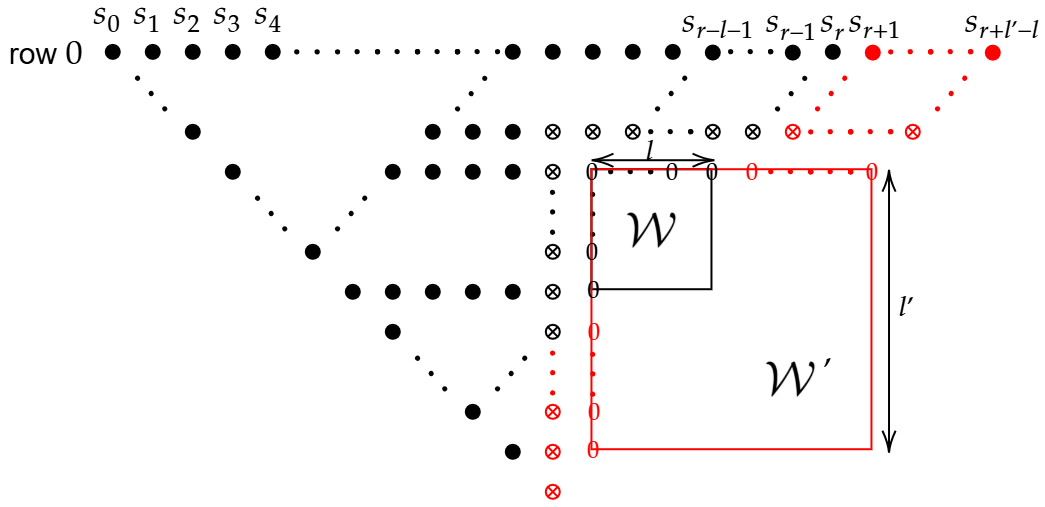}
    \caption{The number wall $W_q(\mathbf{S})$ (black dots) has a right-side open window $\mathcal{W}$ (black square), whose inner frame is denoted by the black crossed dots. There is a unique sequence $ \mathbf{S}'=(s_i)_{r+1\le i \le r+l'-l}$ (red dots, top row) that extends $\mathbf{S}$ (black dots, top row) such that $W_q( \mathbf{S}\oplus\mathbf{S}')$ has the window shown by the large red square.  }
\end{figure}
\subsection{Extensions to Finite Number Walls}
\noindent The following lemma shows how many ways a finite sequence can be extended so that its number wall has a (complete or incomplete) window containing a given square portion.
\begin{lemma} \label{contain}
Let $r$ be a natural number and let $\mathbf{S}$ be a sequence of length $r$. Additionally, let $m',n'$ and $l$ be natural numbers such that any minimal generator of the square portion $\Box=\Box(l,m',n')$ is not a subsequence of $\mathbf{S}$ but overlaps with the final $2k+i$ entries of $\mathbf{S}$, for some $k\in\N$ and $i\in\{1,2\}$. Call the sequence defined by this overlap $ \mathbf{S}_k$, and assume that there are no windows in $W_q( \mathbf{S}_k)$ that intersect or contain $\Box$.  Finally, define $m:=m'-k$ (in such a way that $2m+1-i$ is the number of entries one must extend $ \mathbf{S}_k$ by so that the greatest row index containing defined entries of the number wall of the resulting sequence is $m'$). \\

\noindent Then, the number of extensions to $ \mathbf{S}'=(s_j)_{r\le j \le r+2m+l-i}$ of length $2m+l+1-i$ such that $W_q( \mathbf{S}\oplus\mathbf{S}')$ has a window containing $\Box$ is equal to $q^{2m+1-i}$.
\end{lemma}
\noindent The set up in the $i=1$ case is illustrated below.
\begin{figure}[H]
    \centering
    \includegraphics[width=0.70\linewidth]{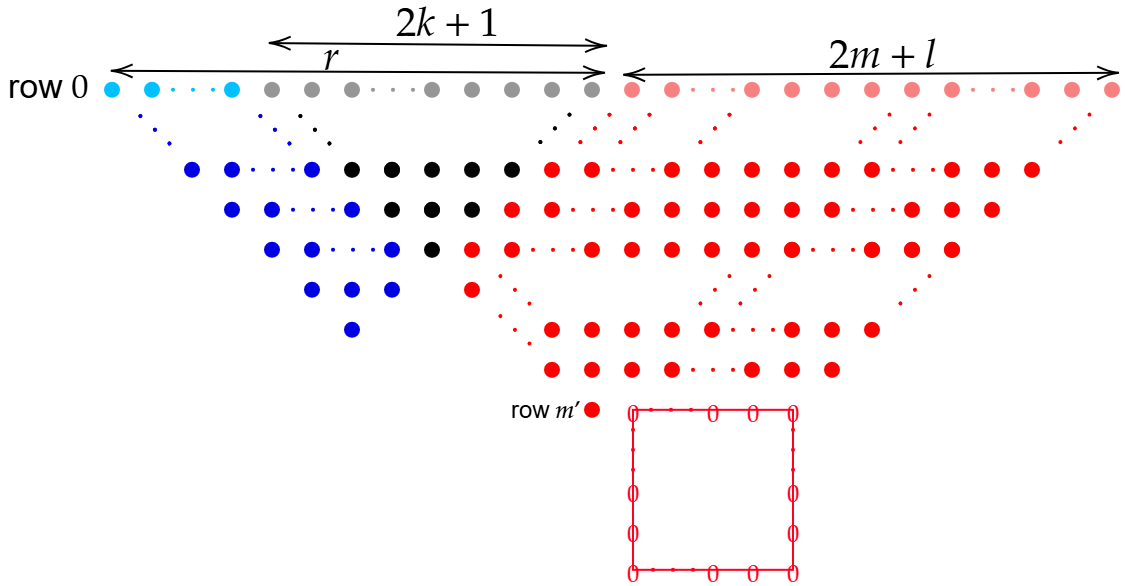}
    \caption{The finite sequence $\mathbf{S}$ (light blue and grey dots) of length $r$ generates a finite number wall (light blue, dark blue, grey and black dots). The subsequence $ \mathbf{S}_k$ (grey dots) generates the finite number wall (grey and black dots) $W_q( \mathbf{S}_k)$. The sequence $ \mathbf{S}'$ (light red dots) extends $ \mathbf{S}_k$ in such a way that $W_q( \mathbf{S}_k\oplus \mathbf{S}')$ (grey, black, light and dark red dots) has a window containing the square portion $\Box(l,m',n')$ (red square). }
    \label{contain_fig1}
\end{figure}

\noindent Both Lemma \ref{contain} and the following corollary play a key role in the proof of Theorems \ref{metric} and \ref{log2}.
\begin{cor}\label{contain full}
Let $q$ be a prime power, $r,n,m,l$ be natural numbers and let $\Box=\Box(l,n,m)$ be a square portion in a number wall over $\F_q$. The number of sequences of length $r$ over $\F_q$ whose number wall has a window containing $\Box$ is $q^{r-l}$ if $r>m+n+l$ and zero otherwise.
\end{cor}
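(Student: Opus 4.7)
The plan is to combine Lemma \ref{contain} with the observation that entries of $S$ lying sufficiently far from the index range underpinning $\Box(l,n,m)$ have no effect on whether a window of $W(S)$ contains $\Box$.

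First, I will note that each entry of the square portion $\Box$ appearing in $W(S)$ is the determinant of a Toeplitz submatrix drawn from the consecutive block $s_{n-m-l+1},\ldots,s_{n+m+l-1}$, and from this block alone. Hence the condition ``some window of $W(S)$ contains $\Box$'' depends only on these $2m+2l-1$ consecutive entries of $S$; the remaining $r-(2m+2l-1)$ entries are unconstrained and contribute a factor of $q^{r-(2m+2l-1)}$ to the total count. The problem thus reduces to counting admissible blocks of length $2m+2l-1$ with the prescribed window condition.

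Next, I will enumerate admissible blocks via Lemma \ref{contain}. Decompose each candidate block into a prefix of length $L=2k+i$ (with $i\in\{1,2\}$ and $k$ chosen so that the prefix ends immediately before the hat generator of $\Box$) and a tail of length $2m'+l+1-i$ (where $m'=m-k$). After matching coordinates so that Lemma \ref{contain}'s square portion $\Box(l,m'+k+2,m'+k)$ coincides with $\Box(l,n,m)$, the lemma delivers exactly $q^{2m'+1-i}$ valid tails per prefix with nonzero right-side blade. For column positions $n$ not satisfying the direct alignment $n=m+2$ demanded by Lemma \ref{contain}, I will exploit the local nature of the wall (entries at column $n'$, row $m''$ depend only on $s_{n'-m''},\ldots,s_{n'+m''}$) together with Lemma \ref{reflect} to re-align $\Box$ to the lemma's canonical configuration. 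Summing over all admissible prefixes, the admissible-block count evaluates to $q^{2m+l-1}$, and multiplying by the free-entry factor $q^{r-(2m+2l-1)}$ delivers the desired total $q^{r-l}$.

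The main obstacle will be handling prefixes with a \emph{zero} right-side blade, where Lemma \ref{contain} does not apply directly. In this case I plan to bootstrap the count by using the Free-Determined Lemma (Lemma \ref{determined}) and the Square Window Theorem (Theorem \ref{window}) to trace back the forced structure of the wall: a zero blade signals an incomplete window at the right edge of the prefix whose closure-or-extension into the admissible block is strongly constrained, and a short induction reduces these contributions to the nonzero-blade case. A secondary subtlety is ensuring the index-shift reduction is consistent across all valid $(n,m)$ without double counting; this follows from the Square Window Theorem, which guarantees that each admissible sequence corresponds to a unique maximal window containing $\Box$, so summing over prefix configurations produces each sequence exactly once.
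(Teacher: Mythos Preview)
Your approach is in the right spirit but is substantially more complicated than it needs to be, and the ``main obstacle'' you identify is in fact a non-issue if you choose the prefix correctly. The paper's proof takes the prefix to have length \emph{one}: because row $-1$ of any number wall consists entirely of $1$'s, a sequence of length $1$ has right-side blade either $\XXX$ (if $s_1\neq 0$, $q-1$ choices) or $\OXX$ (if $s_1=0$, one choice), and in either case the blade is nonzero. Lemma~\ref{contain} then applies directly with $k=0$, $i=1$, giving $q^{2m}$ extensions of length $2m+l$ for each of the $q$ initial choices, whence $q^{2m+1}$ sequences of length $2m+l+1$; the remaining $r-(2m+l+1)$ entries are free, yielding $q^{r-l}$. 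No zero-blade bootstrap, no induction, no Lemma~\ref{determined} is needed.

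Two further remarks on your write-up. First, your claimed dependence range $s_{n-m-l+1},\ldots,s_{n+m+l-1}$ is not tight: as the discussion around Definition~\ref{hat} and the proof of Theorem~\ref{growth} make clear, whether a window contains $\Box(l,n,m)$ is equivalent to the vanishing of the diagonal entries $W_{m+j,n+j}$ for $0\le j\le l-1$, which (via the Hankel reformulation~\eqref{hanktoe}) depends only on the hat generator $s_{n-m},\ldots,s_{n+m+l-1}$ of length $2m+l$. Your extra $l-1$ entries on the left are genuinely free, so your arithmetic still closes, but the looser range is what forces you into choosing a longer prefix and hence into the zero-blade case. Second, the column re-alignment you worry about is handled by translation invariance of the count (a shift of the sequence gives a bijection on $\F_q^r$), so no appeal to Lemma~\ref{reflect} is required.
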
 

\subsection{Rectangular Zero Portions}

\noindent Whilst windows are always in a square shape, determining how many finite sequences have a (complete or incomplete) window that contains a given rectangular portion is essential for proving Theorem \ref{metric}. To this end, the following definition is made:

\begin{definition}\label{Rect_prtn}
Let $r,n,m,l\in\N$, $d\in\Z$ satisfying $d<l$ and let $q$ be a prime power. Define $R_{r,q}(l,d,m,n)$ as the set of finite sequences of length $r$ over $\F_q$ whose number walls have zero entries comprising every part of the rectangular portion with horizontal length $l$, height $l-d$ and top left corner in row $m$ and column $n$.
\end{definition}
\noindent Explicit formulae for $R_{r,q}(l,d,m,n)$ are proved in Lemma \ref{Rect} (see Section \ref{Sect:Rect}). However, for the purposes of proving Theorem \ref{metric}, the following proposition of the aforementioned lemma provides an upper bound that is sufficient. 
\begin{prop}\label{bound}
    Let $l\in\N$, $d\in\Z\backslash\{0\}$ and define $\widetilde{l}=\max\{l,l-d\}$. Then \[\#R_{r,q}({l,d,n,m})\ll_q |d|\cdot q^{r-\widetilde l}.\] 
\end{prop}

\subsection{Pairs of Windows}\label{Sect: Pair of Wind}
\noindent This subsection provides the final result needed to prove Theorem \ref{metric}. Given square portions $\Box_1=\Box(l_1,n_1,m_1)$ and $\Box_2=\Box(l_2,n_2,m_2)$, define the set $W_{r,q}({\Box_1,\Box_2})$ as all the sequences of length $r$ over $\F_q$ whose number wall has (complete or incomplete) windows that contain $\Box_1$ and $\Box_2$. 
\begin{lemma}
   Let $\Box_1=(l_1,n_1,m_1)$ and $\Box_2=(l_2,n_2,m_2)$ be non-overlapping square portions. Furthermore, let $r$ be a natural number that is suitably large so that a sequence of length $r$ generates a number wall where all the entries in both $\Box_1$ and $\Box_2$ are defined. Then, \begin{equation}\#W_{r,q}(\Box_1,\Box_2)\ll_q q^{r-l_1-l_2}.\label{twowindbound}\end{equation} \label{twowind}
\end{lemma}
\section{Hausdorff Dimension of the Set of Counterexamples to $t$-LC with Additional Growth Function}\label{Sect:HD}

\noindent Recall the set $M(t,f)$ from equation \ref{M_q(P(t),f)}. Theorem \ref{growth} shows that if a Laurent series $\Theta(t)$ is in $M(t,f)$, then there exists an $l\in\N$ such that for any $k\in\N$, the square zero portions with top left corner on the $k^\nth$ column and $m^\nth$ row (with $m<k$) of  $W_q(\Theta(t))$ have size less than or equal to $l+\lceil \log_q(f(q^k))\rceil -1$. For the duration of this section, a sequence satisfying this property about the size of its windows as $k$ tends to infinity is said to satisfy the \textbf{window growth property with respect to function} $f$, or just the ``window growth property" for short. The strategy of the proof is to construct a Cantor set of sequences with number walls satisfying this window growth property and to show it has full dimension.\\

\noindent To this end, the statement below (from \cite[Lemma2]{HillVelani}) is introduced to find the Hausdorff dimension of Cantor sets. To state it, define the \textbf{diameter} of  $X\subset\F_q\!\left(\!\left(t^{-1}\right)\!\right)$ as \[\diam(X)=\sup\{|\Theta(t)-\Phi(t)|:\Theta(t),\Phi(t)\in X\}.\]
\begin{lemma}[Mass Distribution Principle]\label{MDP}
  Let $\mu$ be a probability measure supported on a subset X of $\F_q\!\left(\!\left(t^{-1}\right)\!\right)$. Suppose there are positive constants $a$, $s$ and $\rho$ such that for any ball $B$ such that $\diam(B)\le \rho$, one has\[\mu(B)\le a\cdot\diam(B)^s.\] Then, $\dim(X)\ge s$.
\end{lemma}

 \noindent Before the proof of Theorem \ref{log2} begins in earnest, note that the Cantor set constructed by removing all sequences with a window of size \begin{equation}L_k:=l+\lceil \log_q(\log^2_q(q^k))\rceil -1\label{L_k}\end{equation} with their top left corner on \textit{diagonal} $k$ (instead of column $k$) is also a subset $M_q(t,\log^2)$. Indeed, every entry on diagonal $k$ is in column at least $\frac{k}{2}$. Since $f=\log_q^2$, the size a window in column $k/2$ needs to be to violate the window growth property is $l+\lceil \log_q(k^2)-\log_q\left(4\right)\rceil-1$. The constant $-\log_q\left(4\right)$ is absorbed into $l$, as $l$ can be any fixed natural number and so is assumed to be greater than 1. Due to this shift in focus from columns to diagonals, the following notation is introduced.\\
 
 \noindent \hypertarget{diag_prtn}{\textbf{Notation:}}  For natural numbers $l,k$ and $m$, define $\widehat\Box(l,m,k)$ as the square portion of side length $l$ with its top left corner on row $m$ and diagonal $k$. That is, $\widehat\Box(l,m,k)=\Box(l,m, k-m)$.\\

 \noindent Finally, in order to build the aforementioned Cantor set, one must tightly control the size of windows that appear in the final diagonals of a finite number wall.\begin{definition}\label{tapered}
     Let $\mathbf{S}$ be a sequence of length $N\in\N$ over $\F_q$ and let $l\in\N$. The finite number wall $W_q(\mathbf{S})$ is $l$\textbf{-tapered} if there is no incomplete window containing a square portion $\widehat\Box(l,m,N-l)$ for any $0\le m \le \lceil l/2\rceil$.
 \end{definition}
 \begin{figure}[H]
     \centering
     \includegraphics[width=0.5\linewidth]{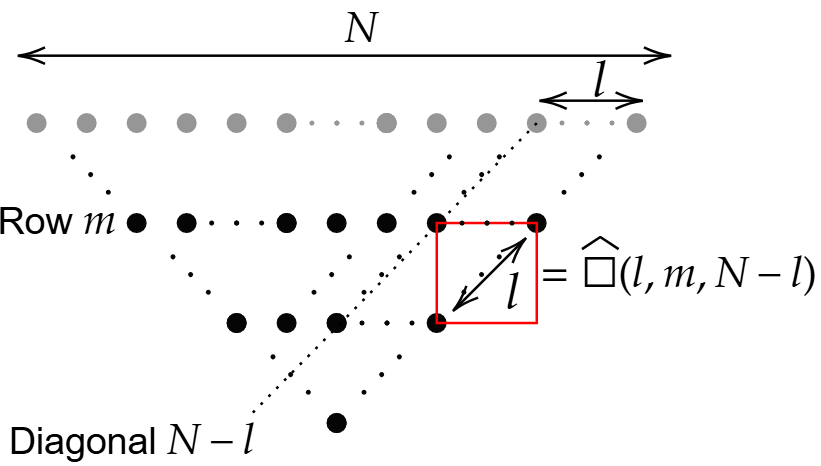}
     \caption{A finite sequence (grey dots, top row) is $l$-tapered if its number wall $W_q(\mathbf{S})$ (all dots) has no incomplete window that contain square portions of size $l$ with top left corner on diagonal $N-l$ (dotted line). One such square portion is illustrated by the red square.}
 \end{figure}

 \noindent In other words, a sequence being $l$-tapered is equivalent to there being no strings of $l$ consecutive zero entries along the right-most diagonal of its number wall. \\
 
 \noindent Given a sequence of length $q^n$, that satisfies the window growth property, the goal is to count how many ways one can extend it into a sequence of length $q^{n+1}$ that also satisfies the window growth property. To this end, recall the definition of $L_k$ (equation (\ref{L_k})) and abbreviate the ``Window Growth Property with respect to $\log^2$'' to ``$\log^2$-WGP''. Then, for $n\in\N$, define the following sets: \begin{itemize}
     \item $X_n=\{\mathbf{S}:=(s_i)_{1\le i \le q^n}: s_i\in\F_q,~S\text{ is }(L_{q^n}/2)\text{-tapered and }W_q(\mathbf{S})\text{ satisfies }\log^2\text{-WGP}\}; $
     \item $Y_n$ a subset of $X_n$ that is defined inductively on $n$ by a process detailed in Section \ref{Sect: Y_n}, the base case being given by $Y_0 = \{y\in\F_q: y\neq0\};$
     \item $\mathcal{J}(Y_n)=\left\{B\left(\sum_{i=1}^{q^n}y_it^{-i},q^{-q^n}\right): (y_i)_{1\le i \le q^n}\in Y_n\right\}$ (recall $B(\cdot,\cdot)$ from Definition \ref{ball});
     \item $\mathcal{I}(Y_{n})=\left\{y\oplus z: y\in Y_n, z\in\F_q^{q^{n}(q-1)}\right\}.$
 \end{itemize} The aforementioned Cantor set $\mathcal{C}$ that lives within $M_q(t,\log^2)$ will be the intersection of $\mathcal{J}(Y_n)$ over all $n\in\N$. The elements of $Y_n$ are used to construct a set $Y_{n+1}\subset X_{n+1}$ and hence the corresponding set $\mathcal{J}(Y_{n+1})$. To do this, each sequence in $Y_n$ is extended into \begin{equation}R_n:=q^{q^n(q-1)}\label{R_n}\end{equation} different sequences of length $q^{n+1}$. For convenience, define $R_{-1}:=q$. The union of these $R_n$ sequences over every $y\in Y_n$ collectively makes the set $\mathcal{I}(Y_{n})$. In other words, \begin{equation*}
      \#\mathcal{I}(Y_{n})=\#Y_n\cdot R_n.
 \end{equation*} Every sequence in $\mathcal{I}(Y_{n})$ that has a window of size larger than $L_k$ with its top left corner on diagonal $k$ in its number wall needs to be removed in order to build $Y_{n+1}$.\\

 \noindent The proof of Theorem \ref{log2} is split into two subsections, with the first calculating how many sequences in $\mathcal{I}(Y_{n})$ are to be thrown away to construct $Y_{n+1}$, and the second calculating the Hausdorff dimension of the resulting Cantor set. The former subsection implements the Proposition \ref{wind_extend_lem} and Lemma \ref{contain}, whilst the latter is more standard in its calculations.
\subsection{Definition of the Level Sets}\label{Sect: Y_n}
\noindent This subsection details the construction of $Y_{n+1}$ and establishes that the cardinality of $Y_{n+1}$ satisfies \[\#Y_{n+1}\ge \#Y_n \cdot R_n\cdot \left(1-q^{2-l}-q^{1-l/2}-q^{-l/2}\right).\]

 \noindent As the number wall generated by the first $q^n$ entries already satisfies $\log^2$-WGP by induction, only the diagonals with index $q^n<k\le q^{n+1}$ are checked. \\

\noindent Let $S\in Y_{n}$ be a finite sequence of length $q^{n}$. There are precisely $R_n$ elements of $\mathcal{I}(Y_n)$ that are extensions of $\mathbf{S}$. Consider the finite number wall generated by one such sequence. 

\begin{figure}[H]
    \centering
    \includegraphics[width=0.6\linewidth]{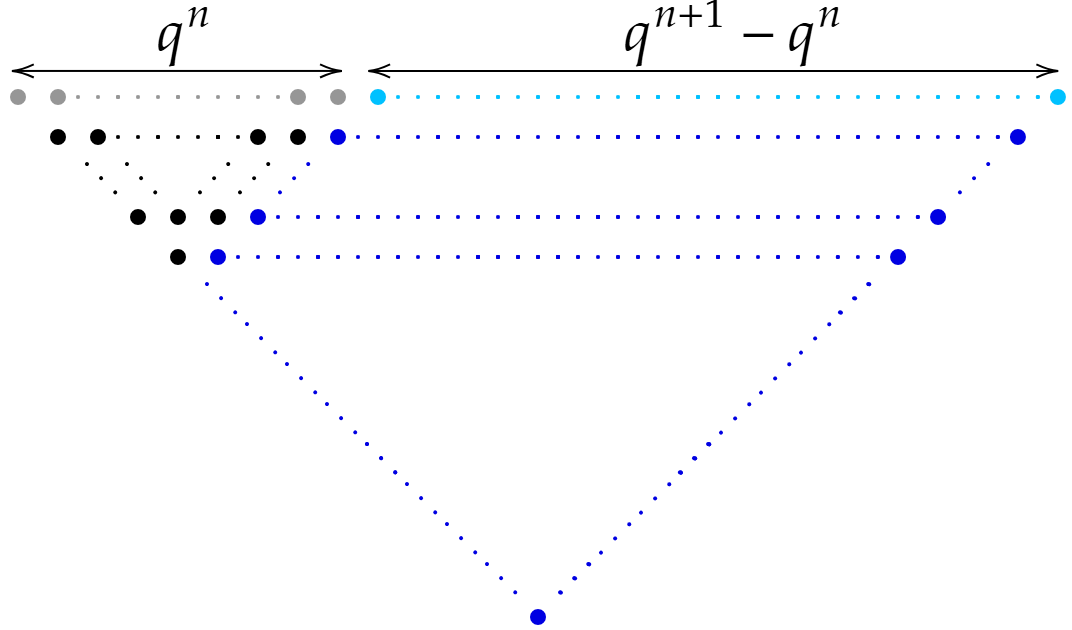}
    \caption{The sequence $\mathbf{S}$ (grey dots) generates a finite number wall (black and grey dots). When $\mathbf{S}$ is extended by $q^{n+1}-q^n$ entries (light blue dots), a large finite number wall is generated (whole picture).}\label{In_wall}
\end{figure}

\noindent The goal is to attain a lower bound for how many of the $R_n$ possible extensions to $\mathbf{S}$ exist whose number wall does not have a window that violates the window growth property. To achieve this, for every natural number $q^n<k\le q^{n+1}$, one removes every extension to $\mathbf{S}$ that generates a finite number wall with a window of size $L_k$ with top left corner on each entry of anti-diagonal $k$. To this end, denote by $\mathbf{S}'$ an extension to $\mathbf{S}$ of length $q^{n+1}$.

\subsubsection*{Partitioning $W_q(\mathbf{S}')$ into Four Parts}

\noindent  The number wall generated by $\mathbf{S}'$ is partitioned into four parts ($P_1, P_2,$ $P_3,P_4$), illustrated by the black, blue, green and red parts in Figure \ref{Cantorpic1}. For now, ignore the fact that some entries are coloured in darker shades.

\begin{figure}[H]
     \centering
     \includegraphics[width=0.6\linewidth]{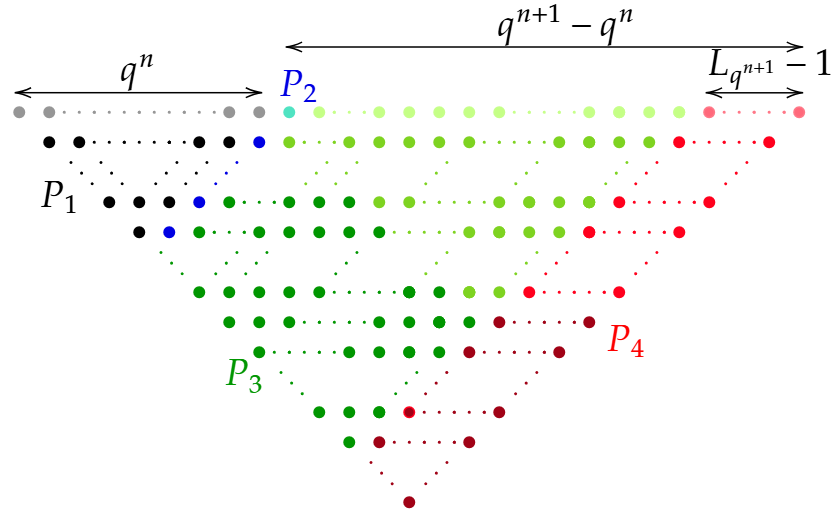}
     \caption{The sequence $\mathbf{S}$ (grey dots, top row) is extended into a sequence $\mathbf{S}'$ (top row, all colours). This figure shows the number wall $W_q(\mathbf{S}')$ split into four parts. }
     \label{Cantorpic1}
 \end{figure}

\noindent The first section of this partition, labelled $P_1$, is $W_q(\mathbf{S})$ (black dots in Figure \ref{Cantorpic1}). This is assumed to already satisfy $\log^2$-WGP. The second part of $P_2$ (blue dots in Figure \ref{Cantorpic1}) is defined as anti-diagonal $q^n+1$ of $W_q(\mathbf{S}')$. The third section, $P_3$ (green dots in Figure \ref{Cantorpic1}) is all the anti-diagonals with index $q^n+1<k\le q^{n+1}-L_{q^{n+1}}$. Finally, the fourth segment, $P_4$, (red dots in Figure \ref{Cantorpic1}) is given by the anti-diagonals with indices $q^{n+1}-L_{q^{n+1}}+1\le k\le q^{n+1}$.

\subsubsection*{Sequences with Large Windows whose Top Left Corner is in $P_2$}

\noindent First, one throws away every sequence $\mathbf{S}'\in\mathcal{I}(Y_n)$ that has a window violating $\log^2$-WGP in $W_q(\mathbf{S}')$ with top left corner in $P_2$. Recall that, by induction, it is assumed that $\mathbf{S}$ is $L_{q^n}/2$-tapered, where $L_{q^n}/2$ was defined in equation (\ref{L_k}).\\

\noindent As one is interested in windows with top left corner in $P_2$, one has that $k=q^n+1$. For each value of row index $m$ in the range $0\le m \le (q^n-1)/2$, there are three possibilities for how one could extend $\mathbf{S}$ into a sequence with a window containing $\widehat\Box(L_k,m,k)$. \begin{itemize}
    \item \textbf{Case 1:} there is no window in $W_q(\mathbf{S})$ that intersects with the square portion $\widehat\Box(L_k,m,k)$. The plan here is to apply Lemma \ref{contain}. To do this, note that any minimal generator of $\widehat\Box(L_k,m,k)$ is not a subsequence of $\mathbf{S}$ but intersects the final $2m$ entries of $\mathbf{S}$, illustrated below. \begin{figure}[H]
        \centering
        \includegraphics[width=0.7\linewidth]{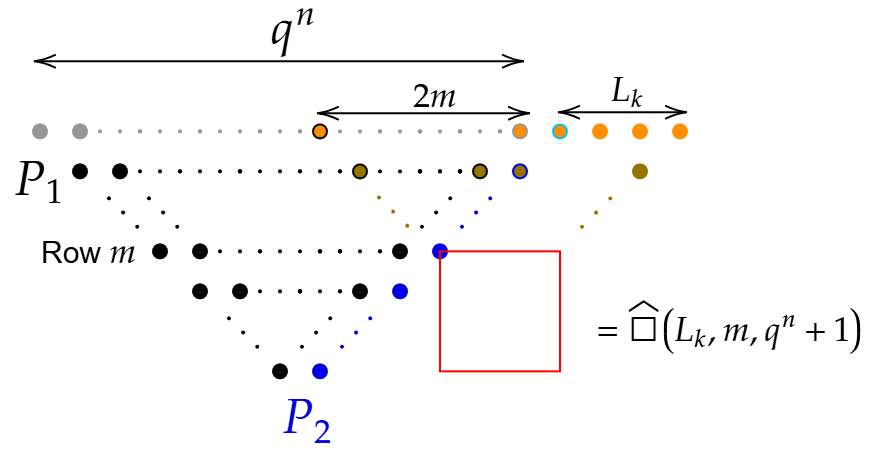}
        \caption{Some dots in this diagram have two colours, with one colour in the interior and the other being the boundary of a dot. The sequence $\mathbf{S}$ (grey dots) generates the number wall $W_q(\mathbf{S})$ (black and grey dots). There is no window in $W_q(\mathbf{S})$ that intersects $\widehat\Box(L_k,m,k)$ (red square), which has top left corner in $P_2$ (blue dots). The entries that make up a minimal generator of $\widehat\Box(L_k,m,k)$ are in orange. Precisely $2m$ of these entries overlap with the end of $\textbf{S}$, whilst $L_k$ of them do not.}
    \end{figure} These final $2m$ entries of $\mathbf{S}$ are called $\mathbf{S}_m$. Therefore, by the assumption of Case 1, the conditions of the $i=2$ case of Lemma \ref{contain} are satisfied with the following values. The left-side of each correspondence below refers to notation from Lemma \ref{contain} and the right-side from this proof. \begin{align*}
        &\mathbf{S}_k\leftrightarrow \mathbf{S}_m& &2k+2\leftrightarrow 2m& &m'\leftrightarrow m& &m\leftrightarrow1& &l\leftrightarrow L_k&
    \end{align*} Recall $R_n$ from equation (\ref{R_n}). Hence, of the possible $R_n$ extensions to $\mathbf{S}$ to a sequence of length $q^{n+1}$, $R_n\cdot q^{-L_k}$ of them have a window with top left corner on row $m$ and anti-diagonal $k=q^n+1$ that violates $\log^2$-WGP. Therefore, $R_n\cdot q^{-L_k}$ entries $\mathcal{I}(Y_n)$ are thrown away. 
    \item \textbf{Case 2:} there is a right-side closed window in $W_q(\mathbf{S})$ that intersects $\widehat\Box(L_k,m,k)$. In this case, there are no possible extensions to $\mathbf{S}$ that will result in a window that contains $\widehat\Box(L_k,m,k)$. Note, that since $\mathbf{S}$ is $L_{q^n/2}$ tapered, any such window in $W_q(\mathbf{S})$ cannot contain $\widehat\Box(L_k,m,k)$.
    \item \textbf{Case 3:} there is a right-side open window $\mathcal{W}$ in $W_q(\mathbf{S})$ that intersects $\widehat\Box(L_k,m,k)$. Since this window is right-side open, it can be extended using Proposition \ref{wind_extend_lem} to contain $\widehat\Box(L_k,m,k)$. However, as $\mathcal{W}$ is size $L_k/2 - i$, for some $0\le i \le L_k/2-1$, $\mathcal{W}$ only needs to be extended by $L_k/2+i$ entries in order to violate the window growth property. In this case, this results in throwing away $R_n\cdot q^{-(L_k/2+i)}$ entries of $\mathcal{I}(Y_n)$. This value is maximised when $i=0$. \end{itemize}
\noindent Out of these three cases, Case 3 throws away the largest number of extensions to $\mathbf{S}$. Recall the definition of $L_k$ from equation (\ref{L_k}). For any $q^n<k\le q^{n+\frac{1}{2}}$, $L_k=l-1+(2n+1)$. Therefore, summing over the $(q^n+1)/2$ choices for $m$, at most \begin{align}
    R_n\cdot q^{-L_k/2}\cdot \frac{q^n+1}{2}&=R_n\cdot q^{(-l+1)/(2)-n}\cdot \frac{q^n+1}{2}\nonumber\\&=R_n\cdot \frac{q^{-l/2}+q^{(-l+1)/2-n}}{2}\nonumber\\&<R_n\cdot q^{-l/2}\label{blue}
\end{align} of the possible $R_n$ extensions to $\textbf{S}$ are removed from $\mathcal{I}(Y_n)$. 
\subsubsection*{Sequences with Large Windows whose Top Left Corner is in $P_3$}

\noindent The next section of this partition to be dealt with is $P_3$. Recall, these are all the entries of $W_q(\mathbf{S}')$ on anti-diagonal $k$ with $k$ in the range $q^n+2\le k\le q^{n+1}-L_{q^{n+1}}$ and row $m\in\N$ satisfying $0\le m \le \left\lceil \frac{k}{2}\right\rceil-1$. The plan is to count how many extensions to $\mathbf{S}$ can have a window containing the square portion $\widehat\Box(L_k,m,k)$. \\

\noindent To this end, $P_3$ is partitioned into two further sections, depicted by the medium and dark shades of green in Figure \ref{Cantorpic1}. Specifically, $P_{3,1}$ (medium green in Figure \ref{Cantorpic1}) is defined as all the entries of $P_3$ such that any minimal generator of any square portion with top left corner at this entry would not intersect $\mathbf{S}$. Then, $P_{3,2}$ is the remaining elements of $P_3$.\\

\noindent If $m,k\in\N$ are such that the entry of the number wall in row $m$ and anti-diagonal $k$ is in $P_{3,1}$, then one can apply Corollary \ref{contain full} to count all the $R_n\cdot q^{-L_k}$ possible extensions of $\mathbf{S}$ that have a window containing $\widehat\Box(L_k,m,k)$.\\

\noindent On the other hand, if the entry of the number wall in row $m$ and anti-diagonal $k$ is in $P_{3,2}$, then the plan is to apply Lemma \ref{contain} to show there are $R_n\cdot q^{-L_k}$ possible extensions to $\mathbf{S}$ that have number walls with windows containing $\Box'(L_k,m,k)$. However, Lemma \ref{contain} assumes that there is no window in $W_q(\mathbf{S})$ that contains the entry of $W_q(\mathbf{S}')$ in anti-diagonal $k$ and row $m$. Therefore, one now verifies that this condition is satisfied.\\

\noindent Say there were some window in $W_q(\mathbf{S})$ that contained the entry in row $m$ and anti-diagonal $k$. Then, the same window necessarily contains an entry from $P_2$. Therefore, any such sequence $\mathbf{S}$ has already been dealt with in the previous case, and, as an upper bound, it is sufficient to assume that Lemma \ref{contain} can be applied to every entry of $W_q(\mathbf{S}')$ in $P_3$.\\

\noindent One is now able to count how many entries of $\mathcal{I}(Y_n)$ are being thrown away. The number of entries of $W_q(\mathbf{S}')$ in $P_3$ is trivially bounded from above by $(q^{n+1})^2$. Furthermore, when $q^n<k\le q^{n+1}$, $L_k$ is equal to $$L_k=l+\lceil\log_q(k^2)\rceil -1=\begin{cases}
    l+2n+1 &\text{ if } q^n<k\le q^{n+1/2},\\ l+2n &\text{ if }q^{n+1/2}<k\le q^{n+1}.
\end{cases}$$ Therefore, the maximum one is throwing away comes from the second of the above cases, and is at most \begin{equation}R_n\cdot q^{2n+2-(l+2n)}=R_n\cdot q^{2-l}\label{green}\end{equation} sequences from $\mathcal{I}(Y_n)$.

\subsubsection*{Sequences with Large Windows whose Top Left Corner is in $P_4$}

\noindent Now, assume that $m,k\in\N$ are such that the entry of $W_q(\mathbf{S}')$ in row $m$ and anti-diagonal $k$ falls in $P_4$. As this section has width $L_k-1$, there does not exist any continuation to $\mathbf{S}$ (denoted $\mathbf{S}'$) that has a window with top left corner in $P_4$ violating the window growth property with respect to $\log^2$. However, it is required that every sequence in $Y_{n+1}$ is $L_k/2$-tapered. Therefore, by the same method as in $P_3$, there are $R_n\cdot q^{-\frac{L_k}{2}}$ such continuations that have a window of size $L_k/2$ in a given entry of anti-diagonal $q^{n+1}-L_k/2$. Furthermore, there are less than $q^{n+1}/2$ such rows, and since $L_k=2n+l$, one must throw away at most \begin{equation}\label{red}
    R_n\cdot q^{-(l+2n)/2}\cdot \frac{q^{n+1}}{2}<R_n\cdot q^{-l/2+1}
\end{equation}
entries of $\mathcal{I}(Y_n)$.\\

\noindent It may appear like one must also throw away all the extensions to $\mathbf{S}$ that contain a window of size $L_k/2+j$ with top left corner on anti-diagonal $q^{n+1}-L_k/2-j$. However, upon closer inspection, each such window necessarily contains a square portion of size $L_k/2$ with top left corner on anti-diagonal $q^{n+1}-L_k/2$, and therefore these extensions have already been removed.

\subsubsection*{The Total to be Removed from $\mathcal{I}(Y_n)$}

\noindent Combining equations (\ref{green}), (\ref{red}) and (\ref{blue}) shows that there are at least \begin{equation}t_n:=R_n\left(1-q^{2-l}-q^{1-l/2}-q^{-l/2}\right)\label{tn}\end{equation}extensions to $\textbf{S}$ that fall within $Y_{n+1}$. This is always positive if $l$ is sufficiently large. 
\subsection{Constructing the Cantor Set}
\noindent The Hausdorff dimension of the Cantor set $\mathcal{C}\subset \textbf{Mad}(t,\log^2,q)$ is now calculated. The method closely follows both \cite[Lemma 1]{cantor} and \cite[Lemma 3.1]{Bad_Harr}, but neither of these results are applicable here since \cite[Lemma 1]{cantor} is only valid for Cantor sets over $\R$, and \cite[Lemma 3.1]{Bad_Harr} requires that for any $\delta>0$, $\prod_{i=0}^nR_i^\delta>R_n$ holds for sufficiently large $n$. 
 \begin{proof}[Proof of Theorem \ref{log2}]
 Define \[r_n:=R_n\cdot\left(q^{2-l}+q^{1-l/2}+q^{-l/2}\right).\] Let $\mathbf{K}(\mathbb{I},\mathbf{R},\mathbf{r})$ be a Cantor set with $\mathbf{R}=(R_n)_{n\ge0}$ and $\mathbf{r}=(r_n)_{n\ge0}$: it is constructed by starting with the unit cylinder $\I$ and splitting each cylinder comprising a level set into $R_n$ sub-cylinders and throwing at most $r_n$ of them away. Let $\varepsilon>0$ be small. Then, as the value of $r_n/R_n$ is independent of $n$, it follows that there exists some $l\in\N$ sufficiently large and some $n'\in\N$ depending on $\varepsilon$ such that for all $n>n'$ \begin{equation}\label{cantor1}
R_n^{1-\varepsilon}\le t_n(l).
\end{equation} 

\noindent A probability measure $\zeta$ is defined on $\mathbf{K}(\mathbb{I},\mathbf{R},\mathbf{r})$ recursively: let \[\zeta(\I):=1\]
\noindent and for $J\in\mathcal{J}(Y_n),$ $J'\in\mathcal{J}(Y_{n-1})$ such that $J\subset J'$ let \begin{equation}\label{Cantor2}
    \zeta(J):=\frac{\zeta(J')}{\#\{\widetilde J\in\mathcal{J}(Y_n):\widetilde J\subset J'\}},
\end{equation}
\noindent It is shown in \cite[Proposition 1.7]{mass} that $\zeta$ can be extended to all Borel sets $F$ of $\F_q\left(\!\left(t^{-1}\right)\!\right)$ by setting \[\zeta(F):=\zeta(F\cap \mathbf{K}(\mathbb{I},\mathbf{R},\mathbf{r})) = \inf \left\{\sum_{J\in\mathcal{J}} \zeta(J)\right\},\] where the infimum is over all coverings $\mathcal{J}$ of $F\cap\mathbf{K}(\mathbb{I},\mathbf{R},\mathbf{r})$ by cylinders $J\in\bigcup_{n\ge0}\mathcal{J}(Y_n).$ For any pair of cylinders $J\in\mathcal{J}(Y_n)$ and $J'\in\mathcal{J}(Y_{n-1})$ such that $J\subset J'$, equation (\ref{Cantor2}) and the definition of $t_n(l)$ in (\ref{tn}) imply that \begin{equation*}
    \zeta(J)\le t_{n-1}^{-1}\zeta(J').
\end{equation*}
\noindent Inductively, this yields \begin{equation}\label{Cantor3}
    \zeta(J)\le\prod_{i=0}^{n-1}t^{-1}_i.
\end{equation}
 \noindent Next, let $\delta_n$ denote the Haar measure of a cylinder $J\in\mathcal{J}(Y_n)$. It is clear that \begin{equation}\label{Cantor4}
     \delta_n=\prod_{i=0}^{n-1}R_i^{-1}.
 \end{equation} 
 \noindent Let $C\subset \mathbb{I}$ be an arbitrary ball in $\I$. There exists $j\in\N$ such that \begin{equation}
     \delta_{j+1}<\diam(C)\le \delta_j.\label{delta_C}
 \end{equation}Therefore, one has that \begin{align}
 \zeta(C)&\le \sum_{\substack{J\in\mathcal{J}(Y_{j+1})\\J\cap C\neq\emptyset}} \zeta(J)~~~\substack{(\ref{Cantor3})\\\le }~~~\left(\frac{\diam(C)}{\delta_{j+1}}\right)\left( \prod_{i=0}^j t^{-1}_i\right)\\&\substack{(\ref{Cantor4})\\=}~~~ \diam(C)^\varepsilon \left(\prod_{i=0}^j \frac{R_i}{t_i}\right)\cdot \diam(C)^{1-\varepsilon}\nonumber
\\ &\substack{(\ref{delta_C})\\<}~~~ \left(\delta_{j}\right)^\varepsilon\cdot  \left(\prod_{i=0}^j \frac{R_i}{t_i}\right)\cdot \diam(C)^{1-\varepsilon}
 ~~~\nonumber\\&\substack{(\ref{Cantor4})\\= }~~~ \frac{R_j}{t_j} \cdot\left(\prod_{i=0}^{j-1} \frac{R_i^{1-\varepsilon}}{t_i} \right)\cdot \diam(C)^{1-\varepsilon}.\label{Cantor 6}
 \end{align}
Using that $d:=\frac{R_j}{t_j}$ is a constant depending only on the fixed number $l$, 
 \begin{align}
 (\ref{Cantor 6})~~~
 < ~~~d\cdot \prod_{i=0}^{j-1} \frac{R_i^{1-\varepsilon}}{t_i} \cdot \diam(C)^{1-\varepsilon}.\label{cantor_fin}
 \end{align}
\noindent If $j>n'$, equation (\ref{cantor1}) implies that 
 \begin{equation*}
     (\ref{cantor_fin})<d\cdot \prod_{i=0}^{n'} \frac{R_i^{1-\varepsilon}}{t_i} \cdot \diam(C)^{1-\varepsilon}.
 \end{equation*}
 \noindent Hence, by the Mass Distribution Principle (Lemma \ref{MDP}), $\mathbf{K}(\mathbb{I},\mathbf{R},\mathbf{r})$ has dimension greater than or equal to $1-\varepsilon$ for any $\varepsilon>0$. Therefore, $\mathbf{K}(\mathbb{I},\mathbf{R},\mathbf{r})$ has dimension 1. This completes the proof. 
 \end{proof}

\section{A Khintchine-Type Result in $P(t)$-adic Approximation}\label{Sect:Khint}
\noindent This section contains the proof of Theorem \ref{metric}. This is a corollary of the following theorem, proved using the combinatorial statements stated in Section \ref{Sect:Lemmata}. \begin{theorem}
   \label{metric2} Let $f:\{q^k\}_{k\ge0}\cup\{0\}\to\mathbb{R}_{>0}$ be a function and $t\in\F_q[t]$ be an irreducible polynomial. For $\mu$-almost every Laurent series $\Theta\in\mathbb{I}$, the inequality \begin{equation}
        f(|N(t)|)\cdot|N(t)|\cdot \left|\left\langle N(t)\cdot\Theta(t)\right\rangle\right|\cdot |N(t)|_{t}\le \frac{1}{q}\label{metinq}
    \end{equation} has infinitely (finitely, respectively) many solutions if \begin{equation}
        \sum_{k\ge0}\frac{k}{f\left(q^k\right)}\label{metsum2}
    \end{equation}
    diverges (converges, respectively) and, in the divergence case, $f$ is non-decreasing.
\end{theorem}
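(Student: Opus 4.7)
The plan is to apply both halves of the Borel--Cantelli lemma, with the $\mu$-measures of the relevant $\limsup$ sets estimated via the combinatorial machinery of number walls developed in the earlier sections. For a nonzero polynomial $N(t) \in \F_q[t]$, set
\[A_N := \bigl\{\Theta \in \I : |N(t)|\cdot f(|N(t)|)\cdot |N(t)|_{p(t)}\cdot |\langle N(t)\Theta(t)\rangle| \le q^{-1}\bigr\},\]
so that the set of Laurent series admitting infinitely many solutions to (\ref{metinq}) is $\limsup_{|N|\to\infty} A_N$. My first step would be to reduce to the case $p(t) = t$ by exploiting the transference mechanism underlying Theorem \ref{mainresult}, or by adapting Theorem \ref{growth} to general irreducible $p(t)$; either way, the Diophantine condition $\Theta \in A_N$ translates into the presence of a prescribed rectangular zero portion in the number wall of $\Theta$.

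For the convergence case, fix $N$ of degree $h$ with $|N|_t = q^{-s}$, so $0 \le s \le h$. Theorem \ref{growth} (with $l=1$) identifies $A_N$ with the event that a rectangle of zeros of explicit position and dimensions depending on $h$, $s$ and $\lfloor\log_q f(q^h)\rfloor$ appears in the number wall. Applying Corollary \ref{contain full} (or, equivalently, computing the measure directly from the linear constraints on the coefficients of $\Theta$) then gives $\mu(A_N) \asymp q^{s-h}/f(q^h)$. Since the number of $N$ of degree $h$ with $|N|_t = q^{-s}$ is $\asymp q^{h-s}$, summation yields
\[\sum_{N \ne 0}\mu(A_N) \;\asymp\; \sum_{h\ge 0}\frac{h+1}{f(q^h)} \;\asymp\; \sum_{k \ge 0}\frac{k}{f(q^k)},\]
and convergence of this series combined with the first Borel--Cantelli lemma concludes this direction.

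For the divergence case, I would invoke the Chung--Erd\H{o}s inequality
\[\mu\bigl(\limsup_N A_N\bigr) \;\ge\; \limsup_{M\to\infty}\frac{\bigl(\sum_{|N|\le M}\mu(A_N)\bigr)^{2}}{\sum_{|N|,|N'|\le M}\mu(A_N \cap A_{N'})}.\]
The key input is the quasi-independence estimate $\mu(A_N \cap A_{N'}) \ll \mu(A_N)\mu(A_{N'})$ for distinct $N, N'$. The event $A_N \cap A_{N'}$ corresponds combinatorially to the simultaneous presence of two specified rectangular zero portions in the number wall of $\Theta$; for well-separated zero portions Theorem \ref{twowind} supplies the required bound $\ll q^{-l_1-l_2}$, while configurations where the portions overlap or are nested are controlled by Lemma \ref{Rect} together with Corollary \ref{bound} (realising the intersection as a single enlarged rectangle). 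This yields $\mu(\limsup_N A_N) > 0$, and a zero-one law stemming from ergodicity of the $t$-shift on $\I$, under which $\limsup_N A_N$ is essentially invariant, would promote the estimate to full measure.

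The principal obstacle is the quasi-independence argument: the implicit constants in Theorem \ref{twowind} must be shown to be uniform across all distinct contributing pairs $(N, N')$, including delicate configurations where $N'$ differs from $N$ by a small multiplicative factor so that the associated zero portions nearly coincide. A secondary difficulty is the zero-one law: one must identify a natural ergodic transformation on $(\I, \mu)$ (for example, a shift on the sequence of Laurent coefficients) with respect to which $\limsup_N A_N$ is invariant modulo null sets, so that positive measure forces full measure.
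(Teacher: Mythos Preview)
Your strategy is the paper's: Borel--Cantelli for convergence, a Chung--Erd\H{o}s argument fed by Theorem~\ref{twowind} and Lemma~\ref{Rect} for divergence, reduction to $p(t)=t$, and a zero-one law to pass from positive to full measure. The two obstacles you flag are precisely where the paper's execution differs from your sketch, and in both cases the paper takes a simpler route than the one you anticipate.

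For quasi-independence, the paper does not run Chung--Erd\H{o}s over $\{A_N\}_N$. Writing $N=Mt^k$ with $(M,t)=1$ and $h=\deg M$, the window $\Box_N$ coming from Theorem~\ref{growth} depends only on the pair $(h,k)$, so the paper passes to the coarser family $A_{h,k}:=\bigcup_{\deg M=h}A_{Mt^k}$ and notes that $\limsup_{|N|\to\infty}A_N=\limsup_{(h,k)}A_{h,k}$. It is $A_{h,k}$, not the individual $A_N$, that \emph{equals} the window event, so Theorem~\ref{twowind} and Corollary~\ref{bound} bound $\mu(A_{h_1,k_1}\cap A_{h_2,k_2})$ directly, with the touching and non-touching cases handled separately. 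Your ``nearly coinciding zero portions'' difficulty then evaporates: distinct $N,N'$ with the same $(h,k)$ give \emph{identical} windows and are merged into a single event, so no pairwise estimate between them is ever needed. Attempting quasi-independence at the level of individual $A_N$ would require non-number-wall input and runs into genuine Duffin--Schaeffer-type obstructions. For the zero-one law, the paper does not argue via ergodicity of the shift --- the limsup set is not obviously invariant under $\Theta\mapsto\langle t\Theta\rangle$, since the growth function $f$ breaks the symmetry between $N$ and $tN$ --- but instead invokes the function-field analogue of Gallagher's zero-one law (Proposition~\ref{fullmeas}, due to Inoue and Nakada), which applies directly to sets of the shape $\{\Theta:|\Theta-P/N|\le\phi(N)/|N|\text{ for some }P\}$.
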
\noindent Theorem \ref{metric2} implies Theorem \ref{metric}.\begin{proof}[Proof of Theorem \ref{metric} modulo Theorem \ref{metric2}]
    If the sum (\ref{metsum2}) converges, the proof is clear. For the divergence case, let \begin{equation}
    A_f:=\{\Theta(t)\in\I: \text{there exist infinite many solutions } N(t)\text{ to equation (\ref{metinq})}\}.\label{A_f}
\end{equation} Next, note that multiplying the function $f$ by $q^i$ for any fixed $i\in\N$ does not effect the convergence of (\ref{metsum2}). That is, if $A_f$ has full measure then so does $A_{q^if}$ for all $i\in\N$. Next, the set $W_q(t,f)$ (defined in equation (\ref{W_q(P(t),f)})) is expressed as \[W_q(t,f)=\bigcap_{i=0}^\infty A_{q^{i}f}.\] As this is a countable intersection of full measure sets, $W_q(t,f)$ has full measure.
\end{proof}

\subsection{Preliminary Lemmata}
\noindent Before the proof of Theorem \ref{metric2} begins in earnest, one further lemma is required. 
\subsubsection*{A Zero-One Law for Diophantine Approximation over Function Fields}
The first reduces the task of proving Theorem \ref{metric2} to only showing that $W_q(t,f)$ has positive measure.
\begin{lemma}[Zero-One Law - Inoue and Nakada, \cite{DSFF}]\label{fullmeas}
   Let $\phi:\F_q[t]\to\R_{>0}$ be a function and let\[E_{N(t)}=\left\{\Theta(t)\in\mathbb{I}: \left|\langle N(t)\cdot \Theta(t)\rangle\right|\le \phi(N(t))\right\}.\] Then $\mu(\limsup_{|N(t)|\to\infty}E_{N(t)})\in\{0,1\}$.
\end{lemma}

\noindent The proof of the above lemma is a direct adaptation of the proof in the real case, provided by Gallagher in \cite{zeroone}.

\subsection{Proof of Theorem \ref{metric2}: The Convergent Case}
\begin{proof}[Proof of Theorem \ref{metric2}]

\noindent Similarly to the set $A_f$ (introduced in equation (\ref{A_f})), define the set \begin{equation}\label{AN}
    A(N(t),t):=\left\{\Theta(t)\in\I:  N(t)\cdot f(|N(t)|)\cdot|N(t)|_{P(t)}\cdot \left|\left\langle|N(t)|\cdot\Theta(t)\right\rangle\right|\le q^{-1}\right\}.
\end{equation}
\noindent Note that $A(N(t),t)$ depends also on $f$, but this is dropped from the notation. \\

\noindent Recall the notation of a ball in the field of Laurent series, introduced in (\ref{ball}). Given a polynomial $N(t)\in\F_q[t]$, let $d(N):=\deg(N(t))$. Then, the set $A(N(t),t)$ is decomposed as follows: \begin{align}
    A(N(t),t)&=\bigcup_{R(t)\in(\mathbb{F}_q[t])_{d(N)}} \left\{\Theta(t)\in\I: \left|N(t)\cdot\Theta(t)-R(t)\right|\le\frac{q^{-1}}{f(|N(t)|)\cdot |N(t)|\cdot |N(t)|_{t}}\right\}\nonumber\\
    &= \bigcup_{R(t)\in(\mathbb{F}_q[t])_{d(N)}} B\left(R(t)N(t)^{-1}, \frac{q^{-1}}{f(|N(t)|)\cdot|N(t)|^2\cdot |N(t)|_{t}}\right).\label{An}
\end{align} As there are $q^{d(N)+1}$ such polynomials $R(t)$, the measure of $A(N(t),t)$ is then calculated as\begin{equation*}
    \mu(A(N(t),t))\le q^{d(N)+1}\frac{q^{-1}}{f(|N(t)|)\cdot|N(t)|^2\cdot |N(t)|_{t}}=\frac{1}{f(|N(t)|)\cdot|N(t)|\cdot |N(t)|_{t}}\cdotp
\end{equation*}\noindent Let $M(t)\in\F_q[t]$ be coprime to $t$ and let $k\in\N$ such that $N(t)=M(t)\cdot t^k$. Then,
\begin{align}
\sum_{N(t)\in\F_q[t]\backslash\{0\}}\mu(A(N(t),t)) &\le \sum_{M(t)\in\F_q[t]\backslash\{0\}}\sum_{k\ge0} \frac{1}{f(|M(t)\cdot t^k|)\cdot|M(t)|}\nonumber\\
&=\sum_{d\ge0}~\sum_{\substack{M(t)\in\F_q[t]\backslash\{0\}\\\deg(M(t))=d}}~\sum_{k\ge0} \frac{1}{f(|M(t)\cdot t^k|)\cdot|M(t)|}\cdotp\label{BC_conv1}
\end{align}
\noindent As there are $(q-1)q^{d(M)}$ polynomials $M(t)\in\F_q[t]$ of degree $d$, \begin{align}
(\ref{BC_conv1})&=\sum_{d\ge0}\sum_{k\ge0} \frac{q-1}{f(q^{d(M)+k})}\cdotp\label{BC_conv2}
\end{align}
\noindent Finally, let $i:=d(M)+k$. For each value of $i\in\N$, there are $ i +1$ combinations of different values for $k$ and $d(M)$ that attain it. Hence, \begin{equation*}
    (\ref{BC_conv2})=\sum_{i\ge0} \left(i+1\right)\cdot \frac{q-1}{f(q^i)} \ll_{P(t),q}\sum_{i\ge0}\frac{i}{f(q^i)}\cdotp
\end{equation*}Applying the Borel-Cantelli lemma proves the convergence case. 
\end{proof}

\subsection{Proof of Theorem \ref{metric2}: The Divergent case. }  

\begin{proof}The proof is split into distinct sections, each of which builds on the previous and is focussed on a specific idea. To begin, the problem is rephrased in terms of number walls.

\subsubsection*{Square Portions Represented by Polynomials}

\noindent Let $N(t)\in\F_q[t]$ be a polynomial that is decomposed as $N(t)=M(t)\cdot t^k$ for $(M(t),t)=1$. Furthermore, let $d(M):= \deg(M(t))$. If $\Theta(t)\in A(N(t),t)$ (as defined in equation (\ref{AN})), Theorem \ref{growth} (applied with $l=1$) implies there is a window in $W_q(\Theta(t))$ that contains the square portion \[\Box(N(t)):=\Box\left(\left\lfloor\log_q(f(q^{d(M)+k}))\right\rfloor,d(M),d(M)+k+1\right).\] By Corollary \ref{contain full}, for $r$ a large natural number, there are $q^{r-\left\lfloor\log_q(f(q^{d(M)+k}))\right\rfloor}$ possible sequences of length $r$ over $\F_q$ whose number wall has a (complete or incomplete) window containing $\Box(N(t))$. However, it is clear that $\Box(N(t))$ only depends on $k$ and $d(M)$, and not on the specific choice of $M(t)$. For $d(M),k\in\N$, define then the set \[A_{d(M),k}:=\left\{\Theta(t)\in A(\widetilde N(t),t): \widetilde N(t)=\widetilde M(t)\cdot t^k, ~\deg(\widetilde M(t))=d(M), \text{ and } (\widetilde M(t),t)=1\right\}.\]
Explicitly, $A_{d(M),k}$ is the union of all the sets $A(N(t),t)$ for which can expressed as $N(t)=M(t)\cdot t^k$ for some polynomial $M(t)$ of degree $d(M)$. In particular, $A_{d(M),k}$ is the union of finitely many sets $A({N(t)},t)$. Therefore, if $\Theta(t)$ is in the set $A({N(t)},t)$ for infinitely many $N(t)\in\F_q[t]$, then $\Theta(t)$ is also in infinitely many sets $A_{d(M),k}$. The converse is also clear, implying \[\limsup_{|N(t)|\to\infty}A({N(t)},t)=\limsup_{\max(d(M),k)\to\infty}A_{d(M),k}.\]
Similarly as above, define $\Box(d(M),k)$ as the unique square portion represented by any $N(t)=M(t)\cdot t^k$, where $\deg(M(t))=d(M)$ and $M(t)$ and $t$ are coprime. \\

\noindent The proof now moves on to finding the measure of the above limsup sets.

\subsubsection*{The Divergence Borel-Cantelli Lemma}

\noindent The following classical lemma is used to attain positive measure of $W_q(t,f)$ from the divergence of the series (\ref{metsum2}). The reader is referred to \cite{Divbc} for the proof and further reading.

\begin{lemma}[Divergence Borel-Cantelli]\label{divbc} Let $\{E_n\}_{n\in \N}$ be a sequence measureable sets in a space with measure $\mu$. Define $E_\infty:=\limsup_{n\to\infty}E_n$. Assume $\sum_{n\ge0}\mu(E_n)=\infty$ and that there exists a constant $C>0$ such that the inequality \[\sum_{s,t\le r}\mu(E_s\cap E_t)\le C\left(\sum_{n\le r}\mu(E_n)\right)^2\] holds for infinitely many $r\in\N$. Then $\mu(E_\infty)\ge1/C$.\end{lemma}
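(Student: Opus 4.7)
The plan is a second-moment argument (a variant of the Chung--Erd\H{o}s / Paley--Zygmund inequality) applied to the counting variables
\[
T_r^{(N)} := \sum_{N \le n \le r} \mathbbm{1}_{E_n}, \qquad 0 \le N \le r.
\]
Cauchy--Schwarz, applied to $T_r^{(N)} = T_r^{(N)} \cdot \mathbbm{1}_{\{T_r^{(N)} > 0\}}$, gives
\[
\Bigl(\int T_r^{(N)}\,d\mu\Bigr)^2 \le \Bigl(\int (T_r^{(N)})^2 \,d\mu\Bigr) \cdot \mu\bigl(T_r^{(N)} > 0\bigr),
\]
which, after expanding both moments as sums over pairs of events, rearranges to
\[
\mu\Bigl(\bigcup_{N \le n \le r} E_n\Bigr) = \mu\bigl(T_r^{(N)} > 0\bigr) \ge \frac{\bigl(\sum_{N \le n \le r} \mu(E_n)\bigr)^2}{\sum_{N \le s,t \le r} \mu(E_s \cap E_t)}.
\]

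First I would apply this with $N = 0$: the hypothesis directly yields $\mu(\bigcup_{n \le r_i} E_n) \ge 1/C$ along the infinite subsequence $\{r_i\}$ supplied by the lemma, hence $\mu(\bigcup_{n \ge 0} E_n) \ge 1/C$. The real task is to upgrade this bound on a union to a bound on $E_\infty = \bigcap_N \bigcup_{n \ge N} E_n$. For a fixed $N$, enlarging the range of summation gives
\[
\sum_{N \le s,t \le r}\mu(E_s \cap E_t) \le \sum_{s,t \le r}\mu(E_s \cap E_t) \le C\Bigl(\sum_{n \le r}\mu(E_n)\Bigr)^2,
\]
while in the numerator the divergence of $\sum_{n \ge 0}\mu(E_n)$ makes the finite correction $\sum_{n < N}\mu(E_n)$ asymptotically negligible, so that
\[
\sum_{N \le n \le r}\mu(E_n) = (1 + o(1))\sum_{n \le r}\mu(E_n) \qquad \text{as } r \to \infty.
\]
Inserting these two estimates into the Cauchy--Schwarz bound and letting $r \to \infty$ along $\{r_i\}$ produces $\mu(\bigcup_{n \ge N} E_n) \ge 1/C$ for every fixed $N$.

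Finally, since $\{\bigcup_{n \ge N} E_n\}_{N \ge 0}$ is a decreasing sequence of sets whose intersection is $E_\infty$ and each term has measure at least $1/C$, continuity of measure from above (applicable here because the relevant ambient set has finite measure in every intended application of the lemma, including Theorem \ref{metric}, where $\mu$ is normalised on $\mathbb{I}$) delivers $\mu(E_\infty) \ge 1/C$. The main obstacle is precisely this uniform-in-$N$ preservation of the constant $1/C$: one has to verify that truncating away the initial segment of indices does not degrade the quasi-independence ratio forced by the hypothesis, which is why the divergence of $\sum \mu(E_n)$ is indispensable rather than merely cosmetic. Everything else is a routine rearrangement of the single Cauchy--Schwarz inequality displayed above.
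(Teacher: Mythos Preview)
The paper does not supply its own proof of Lemma~\ref{divbc}: it introduces the lemma as classical and refers the reader to \cite{Divbc} for a proof. Your proposal is precisely the standard Chung--Erd\H{o}s second-moment argument that one finds in such references, and it is correct. The only point worth flagging is the one you already flag yourself: continuity of measure from above on the decreasing family $\bigl\{\bigcup_{n\ge N}E_n\bigr\}_{N\ge0}$ requires that at least one of these sets have finite measure, which the lemma as stated does not guarantee; your remark that the intended application takes place in the probability space $(\mathbb{I},\mu)$ is exactly the right way to close this, and it matches how the lemma is deployed in the paper.
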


\noindent This lemma is now used to show that $A_\infty:=\limsup_{\min(d(M),k)\to\infty}A_{d(M),k}$ has positive measure. The sum \[\sum_{d(M_1)\le r}~\sum_{k_1\le r-d(M_1)}~\sum_{d(M_2)\le r}~\sum_{k_2\le r-d(M_2)}\mu(A_{d(M_1),k_1}\cap A_{d(M_2),k_2})\] is partitioned into two pieces: \begin{align}
    S_1&=\sum_{d(M_1)\le r}~\sum_{k_1\le r-d(M_1)}~\sum_{d(M_2)\le r}~\sum_{\substack{k_2\le r-d(M_2)\\\Box(d(M_1),k_1)\text{ intersects }\Box(d(M_2),k_2)}}\mu(A_{d(M_1),k_1}\cap A_{d(M_2),k_2}),\nonumber\end{align}
    \noindent and\begin{align}
    S_2&=\sum_{d(M_1)\le r}~\sum_{k_1\le r-d(M_1)}~\sum_{d(M_2)\le r}~\sum_{\substack{k_2\le r-d(M_2)\\\Box(d(M_1),k_1)\text{ does not intersect }\Box(d(M_2),k_2)}}\mu(A_{d(M_1),k_1}\cap A_{d(M_2),k_2}).\label{S2A}
\end{align}

\subsubsection*{The Sum $S_1$}

\noindent The sum $S_1$ is dealt with first. The goal is to show that \[S_1\ll \left(\sum_{d(M)\le r}\sum_{k\le r-h}\mu(A_{d(M),k})\right)^2.\]To achieve this, fix $d(M_1)$ and $k_1$, then consider all the values of $d(M_2)$ and $k_2$ such that $\Box(d(M_1),k_1)$ intersects $\Box(d(M_2),k_2)$. As the size of $\Box(d(M),k)$ is determined only by $d(M),k$ and the function $f$, there are only a finite number of square portions $\Box(d(M_2),k_2)$ intersecting $\Box(d(M_1),k_1)$. These values of $M_2$ and $k_2$ are partitioned into disjoint sets $\{P_i\}_{i\ge0}$ depending on how much larger than $\Box(d(M_1),k_1)$ a square portion must be to contain $\Box(d(M_1),k_1)$ and $\Box(d(M_2),k_2)$. Explicitly, $P_i$ contains all the pairs $(d(M_2),k_2)$ such that the minimum size of any square portion containing $\Box(d(M_1),k_1)$ and $\Box(d(M_2),k_2)$ is $i$ more than $\Box(d(M_1),k_1)$. For example, $P_0$ contains only the pairs $(d(M_2),k_2)$ such that $\Box(d(M_2),k_2)=\Box(d(M_1),k_1)$ (that is, $d(M_2)=d(M_1)$ and $k_2=k_1$). Similarly, $P_1$ contains all the pairs $(d(M_1),k_1)$ such that the square portion containing $\Box(d(M_1),k_1)$ and $\Box(d(M_2),k_2)$ is only one larger than the size of $\Box(d(M_1),k_1)$. Below, the diagram used in the calculation of the measure of $P_1$: \begin{figure}[H]
    \centering
    \includegraphics[width=0.75\linewidth]{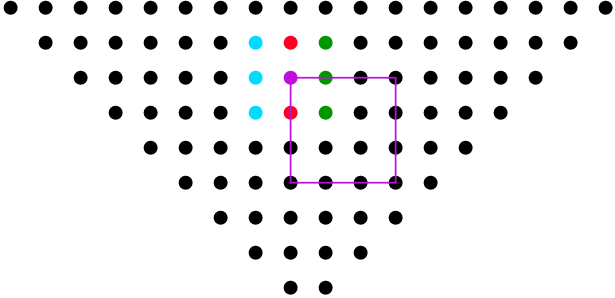}
    \caption{The purple square represents $\Box(d(M_1),k_1)$ and the coloured dots represent the pairs $(d(M),k)\in P_1$.}
    \label{div Pic1}
\end{figure} \noindent Theorem \ref{growth} states that the size of the square portion $\Box(d(M),k)$ is equal to $\left\lfloor\log_q(f(q^{d(M)+k}))\right\rfloor$, and that $d(M)+k+1$ is the column index of the top left entry of the square portion $\Box(d(M),k)$. Therefore, a square portion $\Box(d(M_2),k_2)$ with top left corner in the red positions would have exactly the same size as $\Box(d(M_1),k_1)$. However, a square portion $\Box(d(M_2),k_2)$ with top left corner in the blue (green, respectively) positions would have a size less than or equal to (greater than or equal to, respectively) the size of $\Box(d(M_1),k_1)$. Each starting position determines the shape of the minimal rectangular portion of the number wall that contains both $\Box(d(M_1),k_1)$ and $\Box(d(M_2),k_2)$. \\

\noindent For example, if $d(M_2)$ and $k_2$ are such that $\Box(d(M_2),k_2)$ starts in the top left blue position in Figure \ref{div Pic1}, then $\Box(d(M_2),k_2)$ has size less than or equal to $\Box(d(M_1),k_1)$ and hence $\Box(d(M_1),k_1)$ and $\Box(d(M_2),k_2)$ are contained in a single square portion of length $\left\lfloor\log_q(f(q^{d(M)+k}))\right\rfloor+1$ with top left corner in the blue position of Figure \ref{div Pic1}. One now needs to calculate $\mu(A_{d(M_1),k_1}\cap A_{d(M_2),k_2})$. To this end, let $\Box$ be a square portion of size $l\in\N$. Using Corollary \ref{contain full} and the ultrametric property of the absolute value gives \[\mu(\{\Theta(t)\in\F_q\!\left(\!\left(t^{-1}\right)\!\right): W_q(\Theta(t))\text{ has a window containing }\Box\})=q^{r-l}\cdot q^r=q^{-l}.\] \noindent Hence, in this case, $\mu(A_{d(M_1),k_1}\cap A_{d(M_2),k_2})=q^{-l_1-1}.$\\

\noindent Similarly, if $d(M_2)$ and $k_2$ are such that $\Box(d(M_2),k_2)$ starts in the top middle red position, then $\Box(d(M_1),k_1)$ and $\Box(d(M_2),k_2)$ are contained inside a rectangle with height one greater than its width. In this case, Proposition \ref{bound} implies that
$\mu(A_{d(M_1),k_1}\cap A_{d(M_1)-1,k_1})\ll q^{-l_1-1}.$ \\

\noindent Doing a similar calculation for the other six pairs $(d(M_2),k_2)\in P_1$ shows that \begin{align}\sum_{(d(M_2),k_2)\in P_1}\mu(A_{d(M_1),k_1}\cap A_{d(M_2),k_2})\le8\cdot\mu(A_{d(M_1),k_1}\cap A_{d(M_1)-1,k_1})\ll q^{-l_1-1}.\nonumber\end{align}
\noindent Similarly, the following diagram depicts starting positions for $\Box(d(M_2),k_2)$ when $(d(M_2),k_2)\in P_i$. \begin{figure}[H]
    \centering
    \includegraphics[width=1\linewidth]{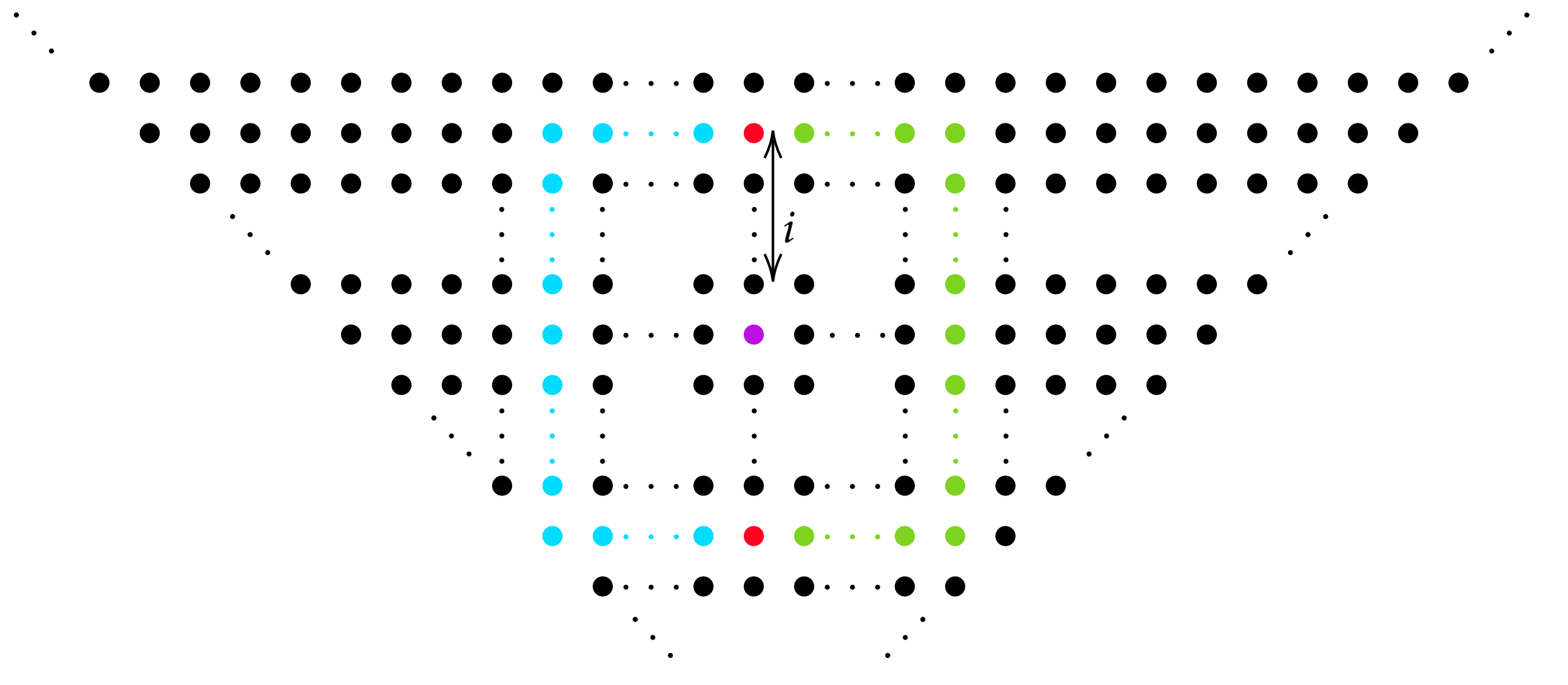}
    \caption{The coloured dots (with the exception of the purple dot) represent the pairs in $P_i$.}
    \label{div pic2}
\end{figure} \noindent As $P_i$ is the boundary of a square where each side is $2i+1$ entries long, there are $8i$ pairs $(d(M_2),k_2)\in P_i$. If $d(M_2)$ and $k_2$ are such that $\Box(d(M_2),k_2)$ starts on the top middle red position, then $\Box(d(M_1),k_1)$ and $\Box(d(M_2),k_2)$ are contained in a rectangle with height $i$ more than its width. Every other pair $(h-i,k-i)\in P_i$ has $\Box(d(M_1),k_1)$ and $\Box(d(M_2),k_2)$ contained in a rectangle with difference between width and height at most $i$. Once again, combining this with Proposition \ref{bound} implies that \begin{align}
    \sum_{(d(M_2),k_2)\in P_i}\mu(A_{d(M_1),k_1}\cap A_{d(M_2),k_2})\le8i\cdot\mu(A_{d(M_1),k_1}\cap A_{d(M_1)-i,k_1})
    \ll 8i \cdot (i+1)\cdot q^{-l_1-i}. \nonumber
\end{align} 
\noindent Depending on the function $f$, the size of the window $\Box(d(M_2),k_2)$ could be different from $\Box(d(M_1),k_1)$. This leads to some pairs $(d(M_2),k_2)\in P_i$ being replaced with $(d(M_2)',k_2')$ representing the starting positions being moved up and to the left. In some cases, pairs in $P_i$ are ignored entirely. Moving a starting position does not effect the calculation and it suffices to assume none are ignored when calculating an upper bound. Furthermore, when $i=l_1+1,$ the sets $P_i$ become empty, as the windows $\Box(d(M_1),k_1)$ and $\Box(d(M_2),k_2)$ no longer intersect. Hence, \begin{align}
    \sum_{d(M_2)\le r}\sum_{\substack{k_2\le r-d(M_2)\\\Box(d(M_1),k_1)\text{ intersects }\Box(d(M_2),k_2)}}\mu(A_{d(M_1),k_1}\cap A_{d(M_2),k_2})&\ll_q q^{-l_1}\cdot\left(1+8\cdot\sum_{i=1}^\infty i^2q^{-i}\right)\nonumber\\
    &=\mu(A_{d(M_1),k_1})\cdot\left(1+8\cdot\sum_{i=1}^\infty i^2q^{-i}\right). \label{divsum1}
\end{align}
\noindent Therefore, from the convergence of the sum in equation, (\ref{divsum1})\begin{align*}
    S_1 \ll_q \sum_{d(M_1)+k_1\le r}\mu(A_{d(M_1),k_1}) \ll_q \left(\sum_{d(M_1)+k_1\le r}\mu(A_{d(M_1),k_1})\right)^2,
\end{align*}with the second inequality using the assumption that $\sum_{d(M_1)\ge0}\sum_{k_1\ge0}\mu(A_{d(M_1),k_1})=\infty,$ which implies there exists some value $r$ such that $\sum_{d(M_1)+k_1\le r}\mu(A_{d(M_1),k_1})>1$. 
\subsubsection*{The Sum $S_2$}
\noindent Next, $S_2$, as defined in (\ref{S2A}), is dealt with.  If $\Box(d(M_1),k_1)$ does not intersect with $\Box(d(M_2),k_2)$, then $\mu(A_{d(M_1),k_1}\cap A_{d(M_2),k_2})$ is given by the number of sequences that have windows containing $\Box(d(M_1),k_1)$ and $\Box(d(M_2),k_2)$ multiplied by the measure of the ball centred around them. By Theorem \ref{twowind}, this is\[\mu(A_{d(M_1),k_1}\cap A_{d(M_2),k_2}) \ll q^{r-l_1-l_2}\cdot q^{-r}=q^{-l_1-l_2}.\]Hence, \begin{align}
    S_2&\ll \sum_{d(M_1)+k_1\le r}\sum_{\substack{d(M_2)+k_2\le r\\\Box(d(M_1),k_1)\text{ does not intersect }\Box(d(M_2),k_2)}} q^{-\left\lfloor\log_q(f(q^{d(M_1)+k_1})\right\rfloor-\left\lfloor\log_q(f(q^{d(M_2)+k_2})\right\rfloor}.\label{divinq1}\end{align}
\noindent To acquire an upper bound on this sum, the condition that $\Box(d(M_1),k_1)$ and $\Box(d(M_2),k_2)$ do not intersect can be dropped:
    \begin{align*}(\ref{divinq1})&\le\sum_{d(M_1)+k_1\le r}\sum_{d(M_2)+k_2\le r} q^{-\left\lfloor\log_q(f(q^{d(M_1)+k_1})\right\rfloor-\left\lfloor\log_q(f(q^{d(M_2)+k_2})\right\rfloor}\\
    &= \left(\sum_{d(M_1)+k_1\le r}q^{-\left\lfloor\log_q(f(q^{d(M_1)+k_1})\right\rfloor}\right)\cdot \left(\sum_{d(M_2)+k_2\le r}q^{\left\lfloor\log_q(f(q^{d(M_2)+k_2})\right\rfloor}\right)\\
    &=\left(\sum_{h+k\le r}\mu(A_{h,k})\right)^2.
\end{align*} Hence, by the Divergence Borel-Cantelli lemma (Lemma \ref{divbc}), the set $A_\infty$ has positive measure. By applying Lemma \ref{fullmeas} with the sets $A_{N(t)}$ from (\ref{AN}) in place of the sets $E_N$, one obtains that $A_\infty$ has full measure, completing the proof.
\end{proof}
\section{Combinatorics on Number Walls}\label{Sect: proofs}
\noindent This section is dedicated to proving all the lemmata stated in Section \ref{Sect:Lemmata}, with each lemma being given its own subsection. 
\subsection{The Free-Determined Lemma}\label{7.1}
This section is dedicated to proving Proposition \ref{wind_extend_lem}.
\noindent The following definitions revolve around extending a finite sequence by one entry and categorising the changes in its finite number wall.
\begin{definition}\label{determined} Let $\mathbf{S}=(s_i)_{1\le i \le r}$ be a finite sequence of length $r$ over a field $\F_q$, and let $W_q(\mathbf{S})=(W_{m,n}(\mathbf{S}))_{m,n\in\Z}$ be the finite number wall generated by $\mathbf{S}$. Assume that $s_{r+1}$ is to be added to the end of the sequence $\mathbf{S}$. Then:\begin{itemize}
    \item An entry on the $(r+1)^\nth$ diagonal of the number wall $W_q( \mathbf{S}\oplus\{s_{r+1}\})$ is \textbf{determined} if it is independent of the choice of $s_{r+1}$.
    \item An entry $W_{i,r+1-i}( \mathbf{S}\oplus\{s_{r+1}\})$ of the number wall on the $(r+1)^\nth$ diagonal is \textbf{free} if for any $x\in\mathbb{F}_q$, there exists a value of $s_{r+1}$ making $W_{i,r+1-i}( \mathbf{S}\oplus\{s_{r+1}\})=x$.
\end{itemize}
\end{definition}
\noindent Given a finite number wall generated by a sequence of length $r$, the following lemma describes how many degrees of freedom there are for the values of the $(r+1)^\nth$ diagonal.
\begin{lemma}[Free-Determined Lemma]\label{detlem}
Let $W_q(\mathbf{S})=(W_{m,n}(\mathbf{S}))_{n,m\in\mathbb{Z}}$ be the finite number wall generated by the sequence $\mathbf{S}=(s_i)_{0\le i \le r}$ over a field $\F_q$. Assume that $s_{r+1}\in\F_q$ is added to the end of $\mathbf{S}$. Then, for $0\le i \le\left \lfloor\frac{r}{2}\right\rfloor$, the value of $W_{i,r+1-i}( \mathbf{S}\oplus\{s_{r+1}\})$ is determined if and only if $W_{i-1,r-i}(\mathbf{S})=0$. Furthermore, if $W_{i,r+1-i}( \mathbf{S}\oplus\{s_{r+1}\})$ is not determined, then it is free. Also, picking a value for any non-determined entry on the $(r+1)^\nth$ diagonal uniquely decides the value for every remaining non-determined entry on the $(r+1)^\nth$ diagonal. 
\end{lemma}
\noindent Lemma \ref{detlem} is illustrated below. \begin{figure}[H]
    \centering
    \includegraphics[width=0.75\linewidth]{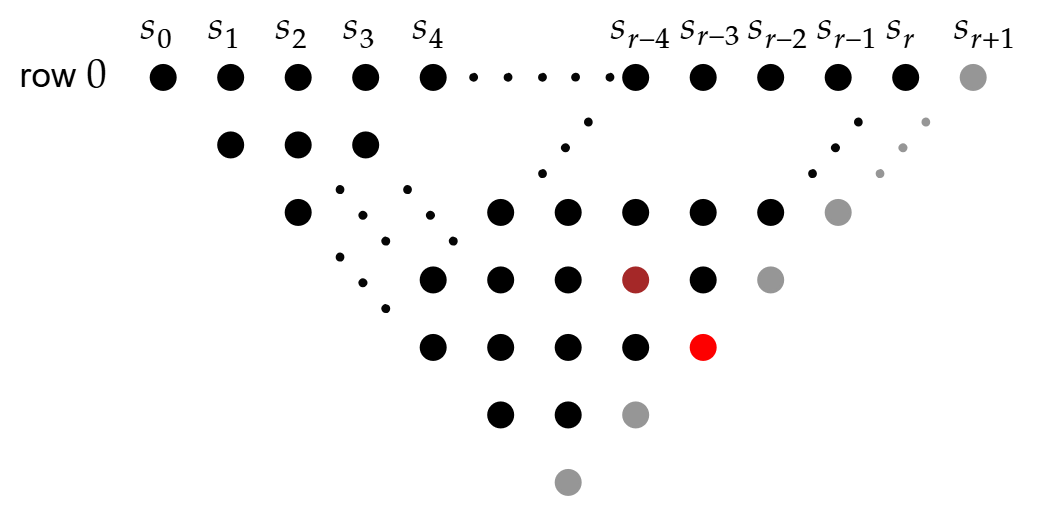}
    \caption{The number wall generated by $ \mathbf{S}\oplus\{s_{r+1}\}$. Each entry on row zero has the corresponding entry of $\mathbf{S}$ written above it. The $(r+1)^\nth$ diagonal is in grey. Lemma \ref{detlem} states that the bright red entry is determined if and only if the dark red entry is zero, and it is free otherwise.}
    \label{fig:enter-label}
\end{figure}
\begin{proof}
\noindent The value of $W_{i,r+1-i}(\mathbf{S})$ is calculated from the definition of a number wall: \begin{equation}
W_{i,r+1-i}(\mathbf{S})=\det(T_{\mathbf{S}}(i,r+1-i))=\det\begin{pmatrix}s_{r+1-i}&\dots&s_{r+1}\\
\vdots&\ddots&\vdots\\
s_{r+1-2i}&\dots&s_{r+1-i}\end{pmatrix}.\nonumber
\end{equation}
\noindent Due to the Toeplitz structure, $s_{r+1}$ only occurs in the top right corner of $T_{\mathbf{S}}(i,r+1-i)$. By expanding along the top row, \begin{equation}\label{ID_div}
W_{i,r+1-i}(\mathbf{S})=s_{r+1}\cdot \det(T_{\mathbf{S}}(i-1,r-i))+Y,
\end{equation}
\noindent where $Y$ is a number that depends only on $\{s_i:1\le i \le r\}$. Hence, the value of $W_{i,r+1-i}(\mathbf{S})$ is independent of $s_{r+1}$ if and only if $\det(T_{\mathbf{S}}(i-1,r-i))=W_{i-1,r-i}(\mathbf{S})=0$. If $W_{i-1,r-i}(\mathbf{S})\neq0$, then for any chosen value of $W_{i,r+1-i}(\mathbf{S})$ there exists a unique choice of $s_{r+1}$ that achieves it. Namely, $s_{r+1}=(W_{i,r+1-i}(\mathbf{S})-Y)/\det(T_{\mathbf{S}}(i-1,r-i))$. Note that this division is well defined by equation (\ref{ID_div}). As the value of $s_{r+1}$ has now been fixed, the value of every non-determined entry on diagonal $r+1$ is also fixed.
\end{proof}
\noindent The Free-Determined Lemma (Lemma \ref{detlem}) has the following simple corollary:

\begin{cor} \label{diag}
Let $\mathbf{S}=(s_i)_{0\le i \le r}$ be a finite sequence over a field $\F_q$ and let $s_{r+1}', s_{r_1}''\in\F_q$. Furthermore, define the sequences $ \mathbf{S}'= \mathbf{S}\oplus\{s_{r+1}'\}$ and $ \mathbf{S}''= \mathbf{S}\oplus\{s_{r+1}''\}$. If $0\le j\le\left\lfloor\frac{r}{2}\right\rfloor$ is such that $W_{j,r+1-j}( \mathbf{S}')$ and $W_{j,r+1-j}( \mathbf{S}'')$ are not determined, then $W_{j,r+1-j}( \mathbf{S}')=W_{j,r+1-j}( \mathbf{S}'')$ if and only if $s_{r+1}'=s_{r+1}''$. 
\end{cor}

\begin{proof}
\noindent Clearly, if $s_{r+1}'=s_{r+1}''$, then the entire $(r+1)^\nth$ diagonal is identical. Assume conversely that $W_{j,r+1-j}( \mathbf{S}')= W_{j,r+1-j}( \mathbf{S}'')$ but that one has not decided on the values of $s_{r+1}'$ and $s_{r+1}''$. Applying Lemma \ref{detlem}, shows that this also fixes the value for every non-determined entry in the diagonal. Furthermore, by the definition of the number wall the values on the row zero cannot be determined (as every entry in row $-1$ is equal to 1), which completes the proof. 
\end{proof}

\begin{remark}
    That $t$-LC (and hence $P(t)$-LC from Theorem \ref{mainresult}) is false when $\F_q$ is infinite (originally established by de Mathan and Teulié in \cite{padic}) is an immediate consequence of Lemma \ref{detlem}. Indeed, if the sequence $(s_i)_{1\le i \le r}$ over $\F_q$ has no zero entries in its finite number wall, then every entry on the $(r+1)^\nth$ diagonal is free. Now, Lemma \ref{detlem} shows there are infinite choices for $s_{r+1}$ that do not create a zero entry on the $(r+1)^\nth$ diagonal. A simple induction beginning with any nonzero sequence of length one generates a counterexample to $t$-LC. 
\end{remark}
\noindent Given an open window in a finite number wall, denoted $\mathcal{W}$, Corollary \ref{diag} implies there is a unique extension to the generating sequence that extends $\mathcal{W}$ into a window of a given size.
\subsubsection*{Proof of Proposition \ref{wind_extend_lem}}
\noindent Proposition \ref{wind_extend_lem} is a direct corollary of the Free-Determined Lemma (Lemma \ref{detlem}).
\begin{proof}[Proof of Proposition \ref{wind_extend_lem}]
    The proof is achieved by repeatedly applying Lemma \ref{detlem}. Indeed, let $W_q(\mathbf{S})=(W_{m,n}(\mathbf{S}))_{m,n\in\Z}$ and say $\mathcal{W}$ has its top left corner on row $m\in\N$ and column $n\in\N$. Then, $W_{m,n+l-1}(\mathbf{S})=0$ and this entry is on the $r^\nth$ diagonal. Furthermore, $W_{m-1,n+l-1}(\mathbf{S})$ is nonzero since it is in the north side of the inner frame of $\mathcal{W}$. Therefore, by Lemma \ref{detlem} $W_{m,n+l}(\mathbf{S})$ is free and there is a unique value of $s_{r+1}$ that makes $W_{m,n+l}(\mathbf{S})=0$. By the Square Window Theorem \ref{window}, $\mathcal{W}$ has been extended into a right-side open window of size $l+1$. This argument is repeated $l'-l$ times to complete the proof. 
\end{proof}

\subsection{The Blade of a Finite Number Wall}\label{Sect:Blade}

\noindent The goal of this section is to prove Lemma \ref{contain}. The first step towards this is to describe the possible formations of entries that can occur at the bottom of the defined part of a finite number wall. 
\subsubsection*{Left and Right-side Blades}
\begin{definition}\label{blade_def}
     Given a finite number wall, its \textbf{blade} is the values on the bottom two rows that contain defined entries. Explicitly, if the number wall is generated by a sequence $\mathbf{S}$ of length $2r+1$ for $r\in\N$, then the blade is the entries in rows $r$ and $r-1$ which are defined, and if the sequence has length $2r$ then the blade is comprised of the entries in row $r-1$ and $r-2$ which are defined. Furthermore, define the \textbf{right-side (left-side}, respectively)\textbf{ blade} as entries in the two right-most (left-most, respectively) columns of the blade. That is, the right-side blade is the entries of the blade of $W_q(\mathbf{S})$ in columns $r$ and $r+1$, regardless of whether $\mathbf{S}$ has length $2r$ or $2r+1$.
     \end{definition}
\noindent These definitions are illustrated below: \begin{figure}[H]
    \centering
    \includegraphics[width=0.75\linewidth]{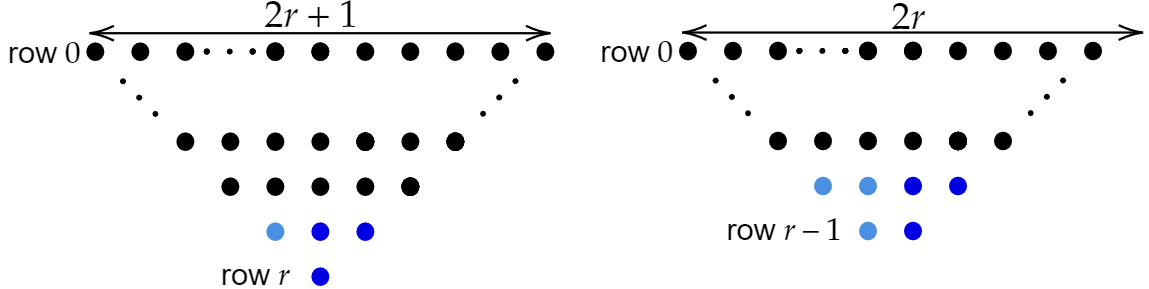}
    \caption{The blades of these number walls are coloured in blue, with the right-side blade in a darker shade. }
\end{figure}
 \noindent Note, that if the finite sequence has odd length the right-side blade and the left-side blade overlap. \\
\subsubsection*{The Symmetry of Finite Number Walls}
\noindent Studying the right-side blade is sufficient for most purposes, but all of the results in this subsection are also true for the left-side blade by symmetry from the following Lemma.

\begin{lemma}
    Let $W_q(\mathbf{S})$ be the finite number wall of the finite sequence $\mathbf{S}=(s_1,\dots,s_r)$ over a field $\F_q$. Define $ \mathbf{S}'=(s_r,\dots,s_1)$ as the `reflection' of $\mathbf{S}$. Then the number wall of $ \mathbf{S}'$ is the number wall of $\mathbf{S}$, reflected vertically. More precisely, let $m$ and $n$ be natural numbers satisfying $0\le m\le \left\lfloor\frac{r-1}{2}\right\rfloor$ and $0\le n\le r-1-2m$, and define as usual $W_{m,n}(\mathbf{S})$ and $W_{m,n}( \mathbf{S}')$ to be the entry on column $n$ and row $m$ of the number walls of $\mathbf{S}$ and $ \mathbf{S}'$, respectively. Then, $W_{m,r-m-n}( \mathbf{S}')=W_{m,m+n+1}(\mathbf{S})$.\label{reflect}
\end{lemma}

\noindent Lemma \ref{reflect} is illustrated by Figure \ref{reflectpic}:\begin{figure}[H]
    \centering
    \includegraphics[width=1\linewidth]{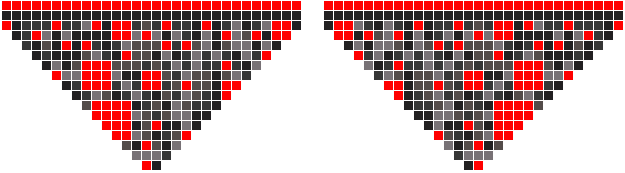}
    \caption{Left: The number wall of a sequence $\mathbf{S}$ over $\F_5$. Each entry of $\mathbf{S}$ is picked with equal probability assigned to every value of $\F_5$. Right: The number wall of $ \mathbf{S}'$, defined as in Lemma \ref{reflect}.}
    \label{reflectpic}
\end{figure}
\begin{proof}
    
    The entry of $W(\mathbf{S})$ in column $m+n+1$ and row $m$ is equal to the determinant of $T_{\mathbf{S}}(m,m+n+1)$. Similarly, the entry in column $r-m-n$ and row $m$ of $W( \mathbf{S}')$ is $T_{\mathbf{S}'}(m,r-m-n)$. The definition of $T_{\mathbf{S}'}(r-m-n,m)$ shows it is the transpose of $T_{\mathbf{S}}(m,m+n+1)$, completing the proof.
\end{proof}
\subsubsection*{The Profile of a Number Wall}
\noindent It is often only important that an entry in the number wall is zero or nonzero, with its exact value in the latter case being irrelevant, whence this definition:
\begin{definition}\label{profile_def}
    Let $W_q(\mathbf{S})$ be the number wall of a (finite or infinite) sequence $\mathbf{S}$ over a field $\F_q$. The \textbf{profile} of $W_q(\mathbf{S})$ is a two dimensional array with the same width and height as $W_q(\mathbf{S})$ where every non-zero entry has been replaced with the same symbol. In other words, the profile of a number wall shows only where its zero entries are located.
\begin{figure}[H]
    \centering
    \includegraphics[width=0.75\linewidth]{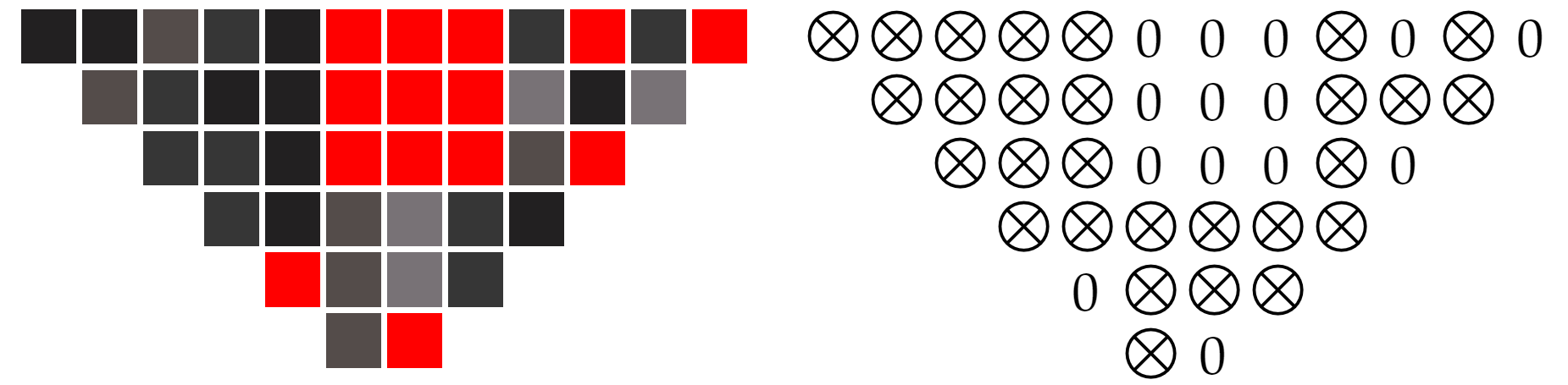}
    \caption{Left: The number wall $W(\mathbf{S})$ for the sequence $\mathbf{S}=(1,1,3,2,1,0,0,0,2,0,2,0)\in\F_5^{12}$. The windows are coloured in red. Right: The profile of $W(\mathbf{S})$.}
\end{figure}
\end{definition}
\noindent This definition allows for dot diagrams to distinguish between zero and nonzero entries without worrying about specific values. 
\subsubsection*{The Profiles of the Right-Side Blades}
\noindent There are seven possible profiles for the right-side blade, as the profile with a nonzero entry in the top left and zeros in the top right and bottom left is not possible due to the Square Window Theorem (Theorem \ref{window}). \begin{figure}[H]
    \centering
    \includegraphics[scale=0.3]{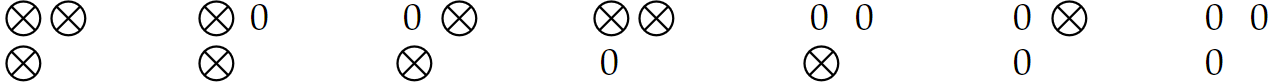}
    \caption{The seven possible right-side blade profiles. }
    \label{bladeshapes}
\end{figure}
\noindent In text, these right-side blade profiles are denoted by $\XXX,~\XXO,~\XOX,~\OXX,~\XOO,~\OOX,~\OOO.$ The right-side blade comprised of all zeros (furthest right in Figure \ref{bladeshapes}) is called the \textbf{zero right-side blade}. All other blades are described as \textbf{nonzero}. \\

\subsubsection*{Extending one Right-Side Blade into Another}

\noindent Let $\mathbf{S}=(s_i)_{1\le i \le r}$ be a finite sequence of length $r$ in $\F_q$ whose number wall has a nonzero right-side blade. There are $q^2$ different sequences $\{s_{r+1},s_{r+2}\}$, such that $ \mathbf{S}':= \mathbf{S}\oplus\{s_{r+1},s_{r+2}\}$ has length $r+2$. These $q^2$ continuations can be partitioned by the different blades they create in $W_q( \mathbf{S}')$. Indeed, let $\{u_i\}_{i=1,2,3}$ and $\{v_i\}_{i=1,2,3}$ be the entries in the right-side blade of the sequence $\mathbf{S}$ and $ \mathbf{S}'$ respectively.\vspace{-0.25cm} \begin{figure}[H]
    \centering
    \includegraphics[width=0.25\linewidth]{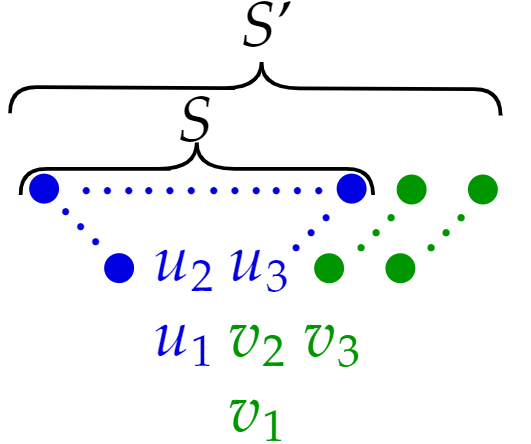}
    \caption{How $u_1,u_2,u_3,v_1,v_2$ and $v_3$ appear in the number wall of $ \mathbf{S}'$ when $r$ is odd.}
    \label{bp}
\end{figure}

\noindent Using Lemma \ref{detlem}, the following diagrams are created. Each shows a given right-side blade profile for $\mathbf{S}$ (in blue, representing $u_1,u_2$ and $u_3$) and all the possible right-side blade profiles for $ \mathbf{S}'$ (in green, representing $v_1, v_2$ and $v_3$), with the number of possible continuations from $\mathbf{S}$ to $ \mathbf{S}'$ giving this right-side blade written underneath. These are referred to as the \textbf{Tree Diagrams}. There are six in total, and two of them are explained in detail. The rest follow similarly. 
\begin{figure}[H]
    \centering
    \includegraphics[width=0.5\linewidth]{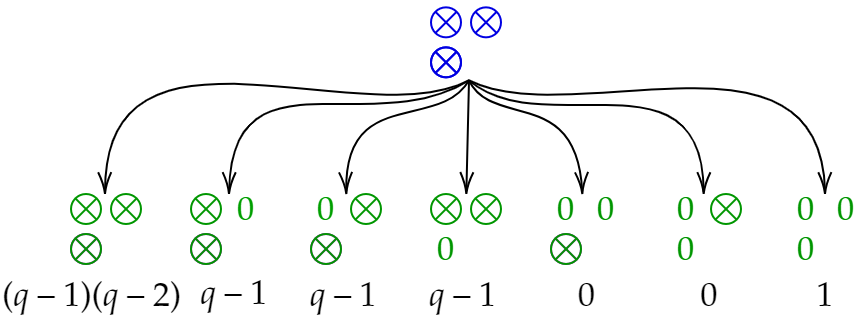}
    \caption{The $\hspace{-7pt}\substack{\p{X}XX\\X}$-Tree Diagram with the corresponding frequency of the possible extensions.}
    \label{XXXTD}
\end{figure}
\noindent In this case, using the notation from Figure \ref{bp}, all the values of $\{u_i\}_{i=1,2,3}$ are nonzero, and hence Lemma \ref{detlem} implies all the values $\{v_i\}_{i=1,2,3}$ are free. Lemma \ref{detlem} also implies there is only one choice of $s_{r+1}$ that would give $v_2=0$, and hence there are $q-1$ choices that give $v_2\neq0$. In the same way, there are unique choices of $s_{r+2}$ that give $v_1=0$ and $v_3=0$. The Square Window Theorem (Theorem \ref{window}) implies these choices coincide if and only if $v_2=0$, explaining why the right-most green right-side blade in Figure \ref{XXXTD} has only one way of occurring and hence why there are $q-1$ choices for the third right-side blade. Furthermore, if $v_2\neq0$ then it is impossible that $v_1=0=v_3$ by the Square Window Theorem (Theorem \ref{window}). Hence, there are $(q-2)$ choices for $s_{r+2}$ making neither $v_1$ or $v_3$ zero, giving $(q-1)(q-2)$ choices for $s_{r+1}$ and $s_{r+2}$ which result in the left-most green right-side blade in Figure \ref{XXXTD}. The same reasoning explains the second and fourth right-side blades. Because the sequence $\mathbf{S}$ has right-side blade $\XXX$, it is impossible for $ \mathbf{S}'$ to have right-side blade $\OOX$ or $\XOO$, as this would result in a non-square window.
\begin{figure}[H]
    \centering
    \includegraphics[width=0.5\linewidth]{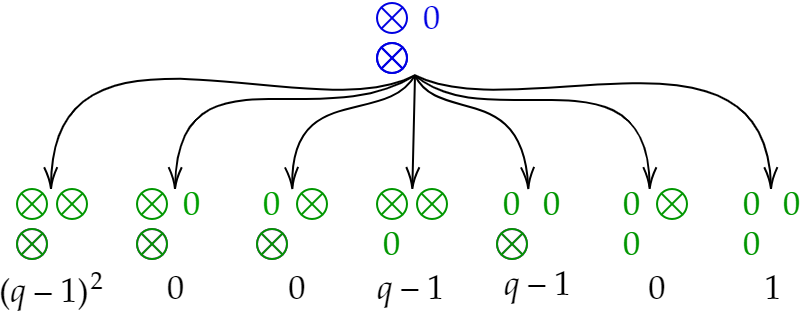}
    \caption{The $\XXO$-Tree Diagram.}
    \label{XXOTD}
\end{figure}
\noindent This case is much the same as the previous Tree Diagram. Using notation from Figure \ref{bp}, one has that $u_3$ is zero and hence $v_3$ is determined. However, it still depends on the value of $s_{r+1}$, and $v_2$ is free. Hence, there is one choice for $s_{r+1}$ making $v_2=0$ and $q-1$ choices for $s_{r+1}$ giving $v_2\neq0$. In the latter case, it is impossible to have $v_3=0$ as this would create two windows intersecting diagonally, violating the Square Window Theorem (Theorem \ref{window}). This explains the second right-side blade. However, $v_1$ is free and hence there are $q-1$ choices for $s_{r+1}$ making it nonzero and a single choice making it zero, explaining the first and fifth right-side blade. The third and sixth right-side blades are impossible due to the Square Window Theorem (Theorem \ref{window}). The third and seventh right-side blades follow exactly as in Figure \ref{XXXTD}.  \\

\noindent The remaining Tree Diagrams are shown below. The reader is encouraged to verify them to gain a better understanding.
\begin{figure}[H]
    \centering
    \includegraphics[width=1\linewidth]{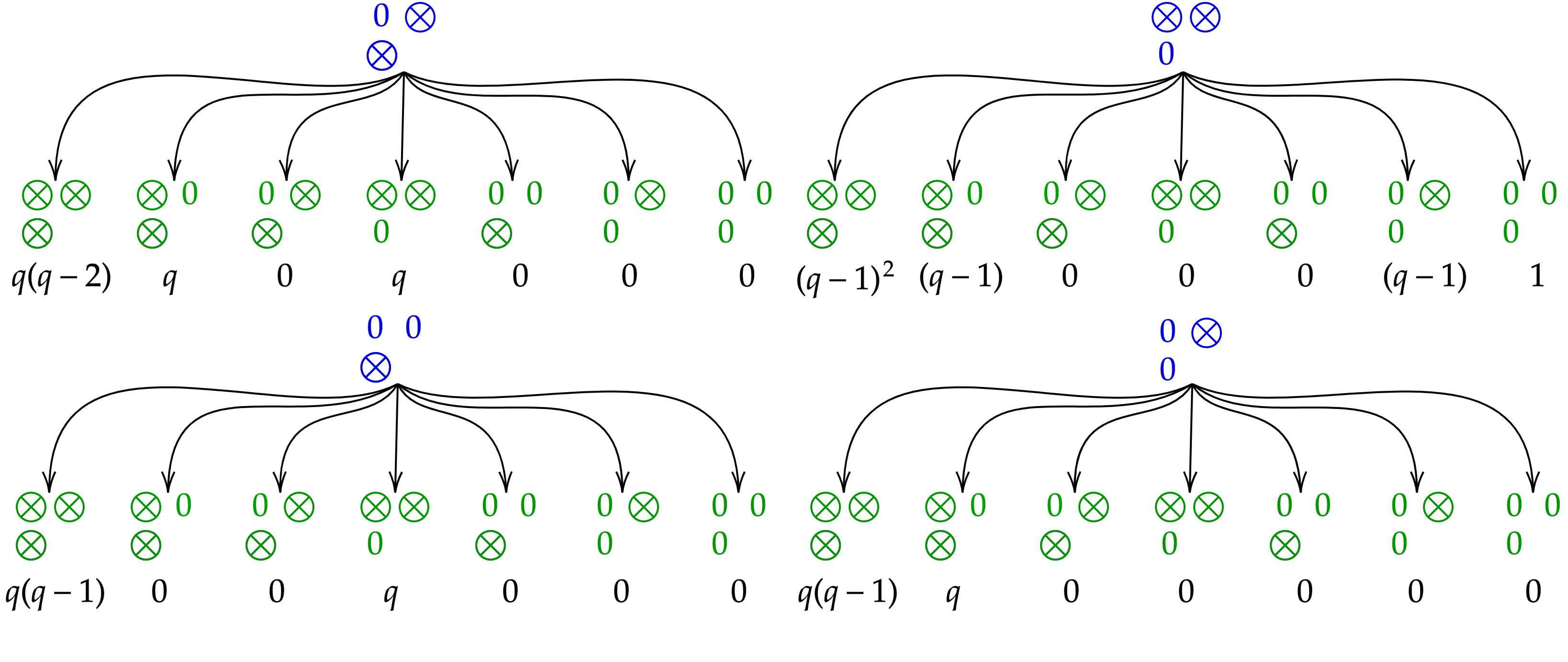}
    \caption{Top left: The $\XOX$-Tree Diagram. Top right: The $\OXX$-Tree Diagram. Bottom left: The $\XOO$-Tree Diagram. Bottom right: The $\OOX$-Tree Diagram.}
    \label{XOXTD}
\end{figure}
\noindent Note that there is no $\OOO$-Tree Diagram. This is because, in this case more context is required to calculate the values of $\{v_i\}_{i=1,2,3}$.
\subsubsection*{Extending Finite Sequences by an Even Number of Entries }
\noindent Given a finite sequence $\mathbf{S}$ with a nonzero right-side blade, the Tree Diagrams show how many length-two continuations of $\mathbf{S}$ give any desired right-side blade. The function introduced in the following definition generalises this to continuations of any arbitrary even length.

\begin{definition}\label{Qdef}
      Define the function \begin{align*}C:\{\text{nonzero right-side blades}\}\times\{\XXX,\XXO,\OXX\}\times(\N\cup\{0\})&\to\N\cup\{0\},\\
     (B_1,B_2,m)&\mapsto C(B_1,B_2,m)\end{align*}as the number of ways a finite sequence whose number wall has right-side blade $B_1$ can be extended by $2m$ entries such that the number wall of the resulting sequence has right-side blade $B_2$.\end{definition} 

\noindent For example, $C(\XXX,\XXO,1)$ is the number of ways a finite sequence whose number wall has right-side blade $\XXX$ can be extended by two digits to have a right-side blade $\XXO$. The $\XXX$-Tree Diagram shows that $C(\XXX,\XXO,1)=q-1$. When $m=0$, it will be convenient later to define $C(B_1,B_2,0)=1$ when $B_1$ is equal to $B_2$, and 0 otherwise.\\

\noindent Although it is not clear \textit{a priori}, the map $C$ is well defined. That is, the value of $C(B_1,B_2,m)$ depends only on the stated variables and not on any other specifics about the choice of sequence. This is justified below, in Lemma \ref{blades}. The same lemma gives explicit values of $C(B_1,B_2,m)$. These formulas do not need to be committed to memory and are not individually important. They are used later in the proof of Lemma \ref{contain}.

\begin{lemma}\label{blades}
   Let $B_1$ and $B_2$ be nonzero right-side blades and $m$ be a natural number. Then the function $C(B_1,B_2,m)$ is well-defined. Explicitly, the value of $C(B_1,B_2,m)$ only depends on the stated variables, and not any other values in the number wall. Furthermore, $C(B_1,B_2,m)$ takes the following values for $m\ge1$:
   \begin{enumerate}[leftmargin=2cm,rightmargin=2cm]
       \item[\large$B_2=\XXX:$ ]
\begin{enumerate}
    \item[(1.a)] $C(\XOO,\XXX,m)=C(\OOX,\XXX,m)=\begin{cases}\frac{q(q-1)^2}{q+1}(q^{2m-2}-1)&\text{ if }m\ge 2\\q(q-1)&\text{ if }m=1.\end{cases}$
    \item[(1.b)] $C(\XOX,\XXX,m)=\begin{cases}\frac{q(q-1)}{q+1}(q^{2m-1}-q^{2m-2}+2)&\text{ if } m\ge2\\q(q-2)&\text{ if } m=1.\end{cases}$
    \item[(1.c)] $C(\XXX,\XXX,m)=\frac{q-1}{q+1}(q^{2m}-q^{2m-1}-2)$.
    \item[(1.d)] $C(\XXO,\XXX,m)=C(\OXX,\XXX,m)=\frac{(q-1)^2}{q+1}(q^{2m-1}+1)$.
    \item[]
\end{enumerate}
       \item[\large$B_2=\XXO:$ ]
\begin{enumerate}
    \item[(2.a)]$C(\XXX,\XXO,m)=C(\OXX,\XXO,m)=\frac{q-1}{q+1}(q^{2m-1}+1)$
       \item[(2.b)] $C(\XXO,\XXO,m)=\frac{q(q-1)}{q+1}(q^{2m-2}-1)$
       \item[(2.c)] $C(\XOX,\XXO,m)=C(\OOX,\XXO,m)=\begin{cases}\frac{q(q-1)}{q+1}(q^{2m-2}-1)&\text{ if }m\ge2\\q&\text{ if }m=1\end{cases}$
       \item[(2.d)] $C(\XOO,\XXO,m)=\begin{cases}\frac{q^2(q-1)}{q+1}(q^{2m-3}+1)&\text{ if }m\ge2\\0&\text{ if }m=1.\end{cases}$
       \item[]
\end{enumerate}
       \item[\large$B_2=\OXX:$ ]
\begin{enumerate}
    \item[(3.a)] $C(\XXX,\OXX,m)=C(\XXO,\OXX,m)=\frac{q-1}{q+1}(q^{2m-1}+1)$
       \item[(3.b)] $C(\XOX,\OXX,m)=C(\XOO,\OXX,m)=\begin{cases}\frac{q(q-1)}{q+1}(q^{2m-2}-1)&\text{ if }m\ge2\\q&\text{ if }m=1.\end{cases}$
       \item[(3.c)] $C(\OXX,\OXX,m)=\frac{q(q-1)}{q+1}(q^{2m-2}-1)$
       \item[(3.d)] $C(\OOX,\OXX,m)=\begin{cases}\frac{q^2(q-1)}{q+1}(q^{2m-3}+1)&\text{ if }m\ge2\\0&\text{ if }m=1.\end{cases}$
\end{enumerate}
   \end{enumerate}
\end{lemma}
\noindent In the upcoming proof and throughout the paper, numbers written above an entry (or collection of entries in a brace) on the top row of a dot diagram indicate how many choices there are in $\F_q$ for that entry resulting in the indicated diagram. Unless specified otherwise, these choices are made from left to right. 
\begin{proof}
\noindent The lemma is proved by induction on $m$. The base cases are given by the Tree Diagrams (Figures \ref{XXXTD}, \ref{XXOTD} and \ref{XOXTD}), and it is clear these are well-defined. Assume that these formulas hold and that the function is well-defined up to $m-1$. The following dot diagram displays the situation:  \begin{figure}[H]
       \centering
       \includegraphics[width=0.5\linewidth]{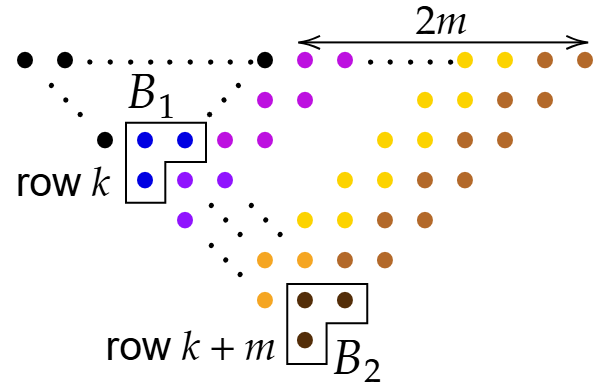}
       \caption{The black dots to the top left represent the starting number wall, with its right-side blade in blue. Each pair of colours represents an additional two entries being added to the sequence, with the right-side blade created by these additions being in a darker shade. This dot diagram assumes the starting sequence has odd length. In the even length case, the first element of the starting sequence can be entirely ignored, reducing back to the odd case.}
   \end{figure}

\noindent Every formula is proved using the same idea. The Tree diagram for $B_1$ shows which possible profiles the right-side blade could take when the generating sequence is extended by two entries, and how many of those extensions result in each right-side blade profile. The induction hypothesis is then applied and some basic algebra is completed.\\

\noindent  The proof of Lemma \ref{blades} is split into two parts. Within each part, the formulae are proved in triples: given a choice of $B_1$, the value of $C(B_1,B_2,m)$ is derived for a generic choice of $B_2$. To obtain an individual formula, a specification of the profile of $B_2$ is made and a calculation is performed. Each formulae will be referred to with the labels from Lemma \ref{blades}. 
\subsubsection*{Proof of Equations (1.a), (1.b), (2.c), (2.d), (3.b) and (3.d)}

    The $\XOO$-Tree Diagram (Figure \ref{XOXTD}) shows that when extending the sequence by two, there are $q(q-1)$ ($q$, respectively) ways to get a right-side blade of profile $\XXX$ (of profile $\OXX$, respectively). With the notations of Definition \ref{Qdef}, it is clear that \[C(\XOO,B_2,m)=q(q-1)\cdot C(\XXX,B_2,m-1)+q\cdot C(\OXX,B_2,m-1).\] Picking a specific right-side blade profile for $B_2$, using the induction hypothesis and doing some simple rearranging gives formulae $(1.a)$, $(2.d)$ and $(3.b)$. It is also clear from this recurrence formula that, if $C(\XXX,B_2,m-1)$ and $C(\OXX,B_2,m-1)$ are well-defined, then $C(\XOO,B_2,m)$ is also. In a similar fashion, the $\OOX$- Tree Diagram (Figure \ref{XOXTD}) shows that\[C(\OOX,B_2,m)=q(q-1)\cdot C(\XXX,B_2,m-1)+q\cdot C(\XXO,B_2,m-1)\] and the $\XOX$-Tree Diagram (Figure \ref{XOXTD}) that $C(\XOX,B_2,m)$ is equal to\[q(q-2)\cdot C(\XXX,B_2,m-1)+q\cdot C(\XXO,B_2,m-1)+ q\cdot C(\OXX,B_2,m-1).\] Once again, picking a specific profile for $B_2$, using the induction hypothesis and rearranging proves the two equalities in $(1.a)$, $(2.c)$, and also the equalities $(3.d)$, $(1.b)$, and $(3.b)$.
\subsubsection*{Proof of Equations (1.c), (1.d), (2.a), (2.b), (3.a) and (3.c)}
    \noindent The formulae for $B_1=\XXX,~ \XXO$ and $\OXX$ are not as simple. The first to be proved is $C(\XXX,B_2,m)$. Using the~$\XXX$-Tree Diagram (Figure \ref{XXXTD}), \begin{align}&C(\XXX,B_2,m)=\nonumber\\&(q-1)(q-2)\cdot C(\XXX,B_2,m-1)+(q-1)\cdot C(\XXO,B_2,m-1)\nonumber\\&+(q-1)\cdot C(\XOX,B_2,m-1)+(q-1)\cdot C(\OXX,B_2,m-1)\nonumber\\&+\widetilde{C}_{\XXX}(\OOO,B_2,m-1).\label{prodind}\end{align} 
    \noindent Here, $\widetilde{C}_{\XXX}(\OOO,B_2,m-1)$ is the number of ways to add $2(m-1)$ digits to a generating sequence with right-side blade $\XXX$, with the additional constraint that the first two digits result in the zero right-side blade. This is illustrated below: \begin{figure}[H]
        \centering
        \includegraphics[width=0.75\linewidth]{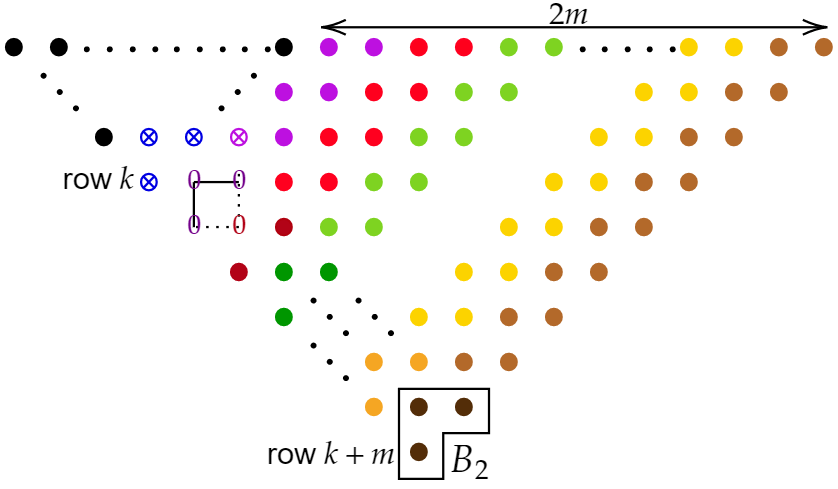}
        \caption{$\widetilde{C}_{\XXX}(\OOO,B_2,m-1)$ is equal to the number of continuations of length $2m$ to the generating sequence (black dots, top row), such that the finite number wall of the generating sequence extended by the first two digits (purple dots, top row) has the zero right-side blade and the full continuation (ending in brown dots, top row) gives the right-side blade $B_2$. The dotted line for the window with top left corner on row $k$ indicates its size has not been determined.}
    \end{figure}
    
    \noindent Let $\mathcal{W}$ be the window created on row $k$. That is, the window that overlaps with the zero right-side blade created by the 2-digit extension of the generating sequence. Evaluating $\widetilde{C}_{\XXX}(\OOO,B_2,m-1)$ amounts to counting the number of continuations of the generating sequence given every possible size of $\mathcal{W}$. For example, the minimum possible size of $\mathcal{W}$ is $2\times 2$, since it contains the zero right-side blade. If $\mathcal{W}$ has side length 2, then the number wall looks as the left one below:\begin{figure}[H]
        \centering
        \includegraphics[width=1\linewidth]{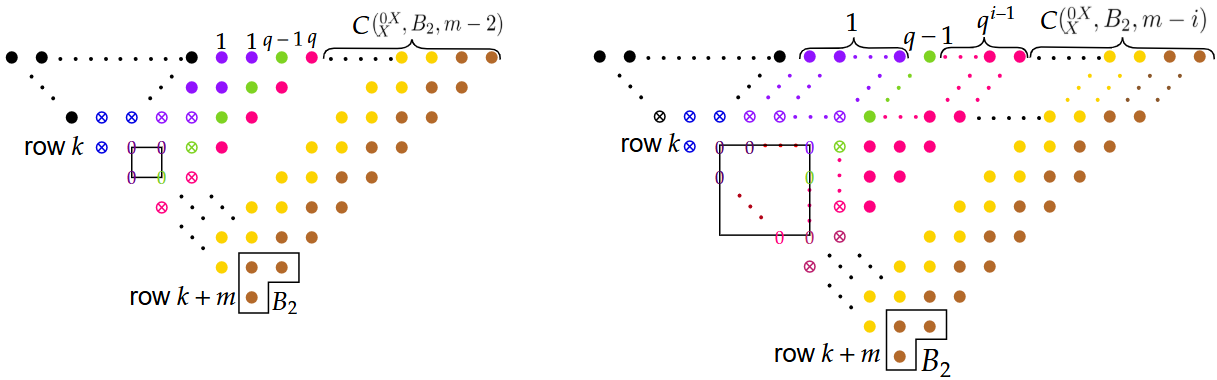}
        \caption{\textbf{Left:} If $\mathcal{W}$ has minimal size, then Lemma \ref{detlem} shows that the first of the two red entries in row zero can take any of the $q-1$ different values that would close the window. The second red entry in row zero is free to take any of the $q$ possible values. \textbf{Right:} The dot diagram when the window on row $k$ is size $i$.}
        \label{bi}
    \end{figure}
    \noindent The case when $\mathcal{W}$ is of size $2\times 2$ contributes $q(q-1)\cdot C(\XOX,B_2,m-2)$ to the total sum. In general, if $\mathcal{W}$ has size $i\times i$, it contributes $(q-1)q^{i-1}C(\XOX,B_2,m-i)$ to the total. This is illustrated in the right-hand side of Figure \ref{bi}. Summing over all values of $i$ gives \begin{align}
        C(\XXX,B_2,m)&=(q-1)(q-2)\cdot C(\XXX,B_2,m-1)+(q-1)\cdot C(\XXO,B_2,m-1)\nonumber\\&+(q-1)\cdot C(\OXX,B_2,m-1)+\sum_{i=1}^{m-1}(q-1)q^{i-1}\cdot C(\XOX,B_2,m-i).\label{indprod}
    \end{align}
\noindent Note that the $C(\XOX,B_2,m-1)$ term from (\ref{prodind}) has been absorbed into the sum above. At this point, a specific choice for $B_2$ is made, the induction hypothesis is used and the sum is evaluated using the geometric sequence formula to obtain relations $(1.c)$, $(2.b)$ and $(3.a)$.\\

\noindent The values of $C(\OXX,B_2,m)$ and $C(\XXO,B_2,m)$ are derived using an identical method. For this reason, the full detail of these proofs is omitted. Instead, only the recurrence relations akin to equation (\ref{indprod}) are provided:\vspace{-0.2cm}\begin{align*}
    C(\XXO,B_2,m)=&(q-1)^2\cdot C(\XXX,B_2,m-1)+(q-1)\cdot C(\OXX,B_2,m-1)\nonumber\\&+\sum_{i=1}^{m-1}(q-1)q^{i-1}\cdot C(\XOO,B_2,m-i).\nonumber\\
    C(\OXX,B_2,m)=&(q-1)^2\cdot C(\XXX,B_2,m-1)+(q-1)\cdot C(\XXO,B_2,m-1)\nonumber\\&+\sum_{i=1}^{m-1}(q-1)q^{i-1}\cdot C(\OOX,B_2,m-i).\nonumber
\end{align*}\vspace{-0.7cm}
\end{proof}

\subsubsection*{Establishing Lemma \ref{contain}}
\noindent Recall the statement of Lemma \ref{contain}. Figure \ref{contain_fig1} is recreated with additional details that will aid in the proof. \begin{figure}[H]
    \centering
    \includegraphics[width=1\linewidth]{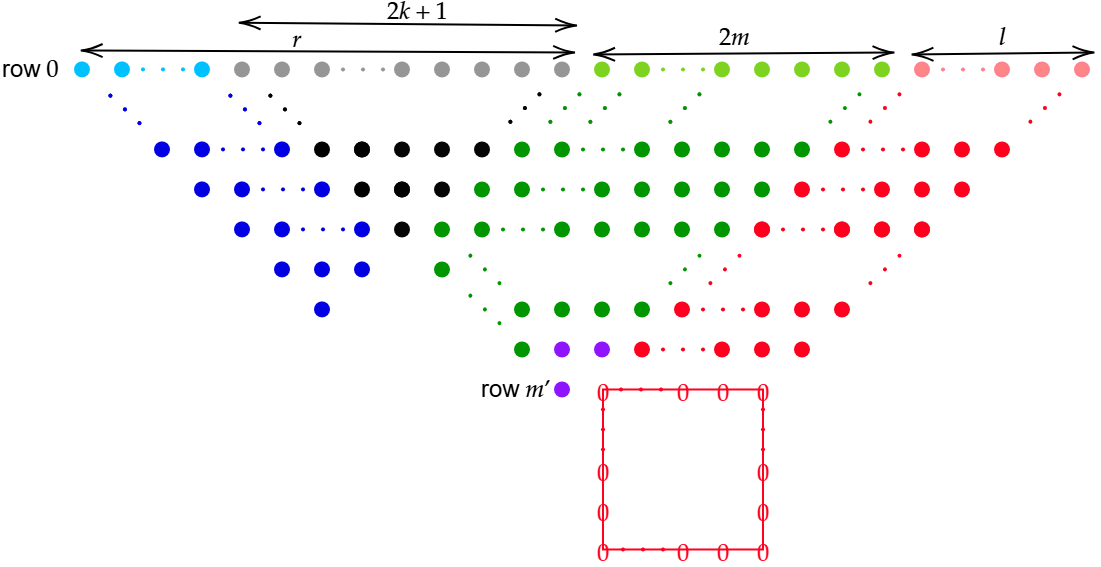}
    \caption{The finite sequence $\mathbf{S}$ (light blue and grey dots) of length $r$ generates a finite number wall (light blue, dark blue, grey and black dots). The subsequence $ \mathbf{S}_k$ (grey dots) generates the finite number wall (grey and black dots) $W_q( \mathbf{S}_k)$. The sequence $ \mathbf{S}'$ (light red dots) extends $ \mathbf{S}_k$ in such a way that $W_q( \mathbf{S}_k\oplus \mathbf{S}')$ (grey, black, light and dark red dots) has a window containing the the square portion $\Box(l,m',n')$ (red square).}
    \label{Contain_proof_fig1}
\end{figure}

\noindent Using notation from Figure \ref{Contain_proof_fig1}, the goal is to calculate how many extensions of length $2m+l$ (since, in this case, $i=1$) there are to $\mathbf{S}$ such that the resulting number wall has a (complete or incomplete) window containing $\Box$.

\begin{proof}[Proof of Lemma \ref{contain}]
\noindent Firstly, it is sufficient to assume that $r=2k+i$. If this is not the case, the first $r-2k-i$ entries of $\mathbf{S}$ can be completely ignored, which returns the proof to the case $r=2k+i$. This has no effect on the calculation, since these entries are not part of any minimal generators of $\Box$. \\

\noindent Let $B$ be the right-side blade of $W_q( \mathbf{S}_k)$. The proof now splits into multiple cases.\begin{itemize}
    \item{Case 1:} $i=1$ and $B$ is not the zero right-side blade;
    \item{Case 2:} $i=1$, $B$ is the zero right-side blade and the incomplete window that contains $B$ is right-side closed;
    \item{Case 3:} $i=1$, $B$ is the zero right-side blade and the incomplete window that contains $B$ is right-side open;
    \item{Case 4:} $i=2$ and the entry of $B$ with the greatest row index is nonzero.
    \item{Case 5:} $i=2$, the entry of $B$ with the greatest row index is zero and the incomplete window that contains this entry of $B$ is right-side closed;
    \item{Case 6:} $i=2$, the entry of $B$ with the greatest row index is zero and the incomplete window that contains this entry of $B$ is right-side open.
\end{itemize} 

\noindent The majority of the work takes place in cases 1 and 3. Furthermore, cases 5 and 6 are extremely similar to cases 2 and 3 and therefore only the differences in the proofs are mentioned. 
\subsubsection*{Case 1}
\noindent In Case 1, it is assumed that $W_q( \mathbf{S}_k)$ has a nonzero right-side blade. The diagram below illustrates the set up. \begin{figure}[H]
    \centering
    \includegraphics[width=0.75\linewidth]{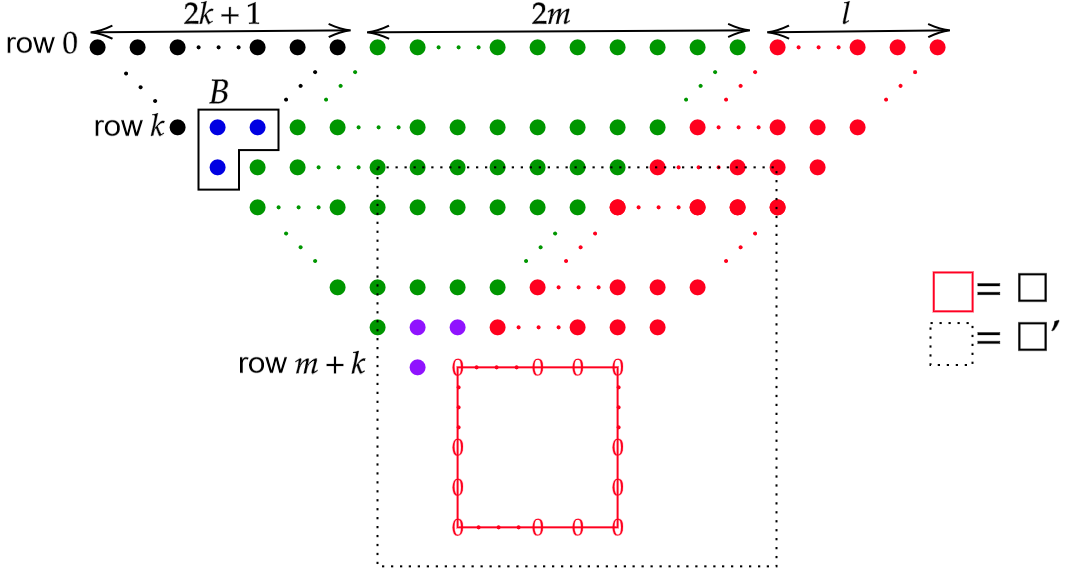}
    \caption{Case 1. The sequence $ \mathbf{S}_k$ (top row, black) has nonzero blade (dark blue). It is extended by $2m+l$ entries (top row, green and red) so that the extended number wall has an incomplete window (dotted square) containing the square portion $\Box$ (red square). Here, the dotted lines mean that this is just one of many possible windows. The right-side blade of the sequence of length $2k+2m+1$ is in purple. }\label{contain pic}
\end{figure}\noindent The result is proved by summing over all the sequences whose first $2k+1$ digits agree with $ \mathbf{S}_k$ and whose number walls have (incomplete or complete) windows that contain $\Box$. These sequences are partitioned by the row and column that their windows begin in. For example, the first section of the partition contains all the sequences of length $2k+2m+l+1$ that are extensions of $ \mathbf{S}_k$ and whose number wall has a window containing $\Box$ that has its top left corner in either row $m+k$ or column $n$. This partition is illustrated by the left dot diagram below. The right diagram is used later in the proof and should be ignored for now.\begin{figure}[H]
    \centering
    \includegraphics[width=1\linewidth]{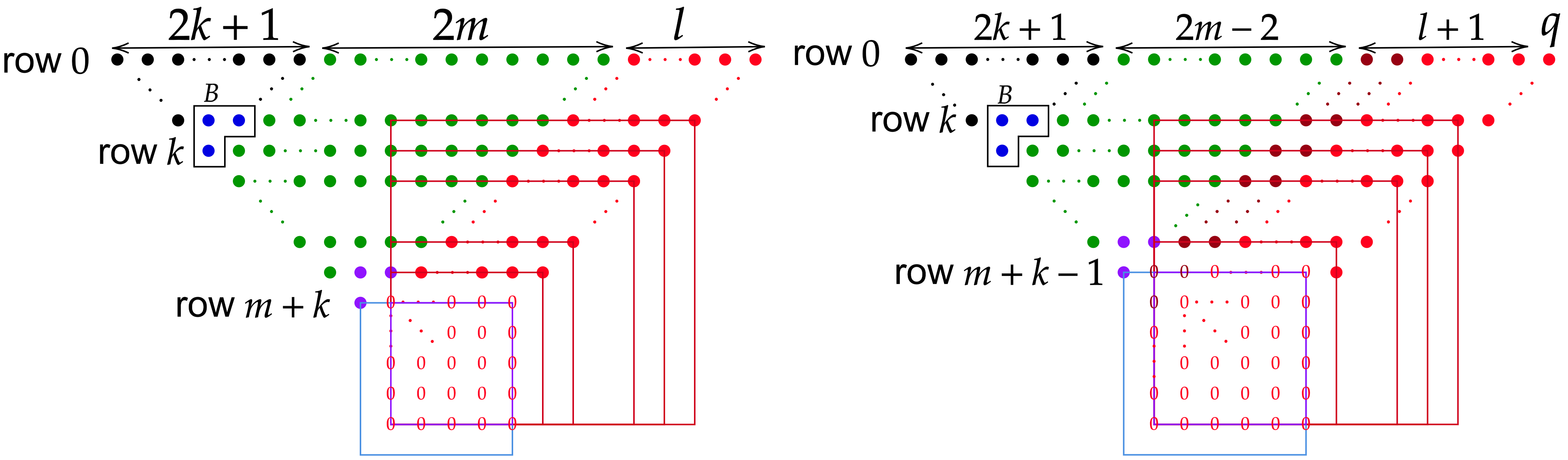}
    \caption{\textbf{Left:} The purple square denotes the incomplete window that contains $\Box$ and which has its top left corner in the same row and column as $\Box$; that is, $\Box$ itself. The blue square represents the incomplete window that contains $\Box$ which starts on the same row as $\Box$ but not the same column. All the red squares represent windows that start in the same column as $\Box$. \textbf{Right:} When the starting row or column has been decreased by one, the square portions in this partition are one row higher. This shortens the green extension to the black sequence by two terms. The dark red entries are the ones that were in the green extension to $ \mathbf{S}_k$ in the first partition but are now part of the unique extension required to generate the window.}
    \label{owco}
\end{figure}

\noindent \noindent With notation from Definition \ref{Qdef}, there are $C(B,\XXX,m)$ ways to extend $\textbf{S}_k$ by $2m$ entries such that the purple right side blade (left image of Figure \ref{owco}) has profile $\XXX$. Then, the Free-Determined Lemma (Lemma \ref{detlem}) implies there is a unique continuation of length 1 that creates a zero in the top left corner of $\Box$. Then, Proposition \ref{wind_extend_lem} shows that there is a unique extension of length $l-1$ that would create a right-side open incomplete window that contains the square portion $\Box$. This right-side open incomplete window is the purple square of Figure \ref{owco}. Now consider the window denoted by the blue square in the left image of Figure \ref{owco}. In this case, the purple right-side blade must have profile $\OXX$, and hence by applying Proposition \ref{wind_extend_lem}, there are $C(B,\OXX,m)$ ways of obtaining this right-side open incomplete window that contains $\Box$. Next, note that a sequence cannot have more than one of the red windows from Figure \ref{owco} in its number wall, as they intersect. Also note that each of the red windows requires that the purple right-side blade has profile $\XXO$. Hence, the sequences counted by $C(B,\XXO,m)$ are precisely the sequences whose number walls contain one of the red windows. Each of these sequences must then be extended $l$ times by applying Proposition \ref{wind_extend_lem}. Therefore, the contribution of this whole partition to the total sum is \[C(B,\XXX,m)+C(B,\OXX,m)+C(B,\XXO,m).\] 

\noindent The second section of the partition is similar, but contains all the sequences of length $2k+2m+l+1$ that have windows containing $\Box$ that have their top left corner in row $m+k-1$ or column $n-1$. This is illustrated in the right-hand side of Figure \ref{owco}. As before, begin with the window drawn in purple in the right image of Figure \ref{owco}. The purple right-side blade has moved in position up one row and left one column and it once again must have profile $\XXX$. There are $C(B,\XXX,m-1)$ ways to attain a continuation to $ \mathbf{S}_k$ that satisfies this. However, these continuations are of length $2m-2$, instead of $2m$ as in the previous case. Furthermore, the size of the windows in this partition has only increased by one. Therefore, the final entry in the a sequence of length $2k+2m+l+1$ is free to take any value and hence there are $qC(B,\XXX,m-1)$ such sequences. The argument is the same for the red and blue windows containing the square portion $\Box$ in Figure \ref{owco}, and so this case contributes \[q\big(C(B,\XXX,m-1)+C(B,\OXX,m-1)+C(B,\XXO,m-1)\big)\] to the total sum. Each partition is done in a similar way, leading to a total value of \begin{equation}\sum^{m}_{i=0}q^i\big(C(B,\XXX,m-i)+C(B,\OXX,m-i)+C(B,\XXO,m-i)\big).\label{containtotal}\end{equation} From Lemma \ref{blades}, it can be seen that for $m>1$,\begin{equation}C(B,\XXX,m)+C(B,\OXX,m)+C(B,\XXO,m)=q^{2m-1}(q-1).\label{trouble}\end{equation} If $B=\XXX,\XXO,$ or $\OXX$, this formula also applies for $m=1$ and the sum equals 1 for $m=0$. If $B=\OOX,\XOO$ or $\XOX$, then at $m=1$, the left-hand side of (\ref{trouble}) has value $q^2$.\\

\noindent In the former case when $B=\XXX,\XXO$ or $\OXX$, \begin{align*}(\ref{containtotal})&=\sum_{j=0}^{m-1} q^{2m-1-2j}(q-1)\cdot q^j + q^{m}
    =q^{m}((q-1)(q^{m-1}+q^{m-2}+\dots+q+1)+1)=q^{2m}.
\end{align*}

\noindent In the latter case when $B=\XOO,\OOX,$ or $\XOX$, 
\begin{align*}
(\ref{containtotal})&=\sum_{j=0}^{m-2} q^{2m-1-2j}(q-1)\cdot q^j + q^{m-1}\cdot q^2
    =q^{m+1}((q-1)(q^{m-2}+q^{m-3}+\dots+q+1)+1)=q^{2m}.
\end{align*}
This concludes Case 1.1. 

\subsubsection*{Case 2}
\noindent In this case, it is assumed that the window that contains the right-side blade $B$ is right-side closed. For brevity, call this window $\mathcal{W}$. Figure \ref{3.5.6} illustrates this case. Because $\mathcal{W}$ is closed, there exists a maximal $s\in\N$ such that when $ \mathbf{S}_k$ is extended to be a sequence of length $2k+1+s$, denoted $ \mathbf{S}_k'$, the right-side blade of $W_q( \mathbf{S}_k')$, denoted $B'$, is part of $\mathcal{W}$ or its inner frame and is therefore independent of the choice of extension. Furthermore, $B'$ is not the zero right-side blade, since if it were then it would be contained within the window $\mathcal{W}$ and hence by increasing $s$ by two would result in a new right-side blade that was contained in the inner frame of $\mathcal{W}$, contradicting the maximality of $s$. \begin{figure}[H]
    \centering
    \includegraphics[width=0.75\linewidth]{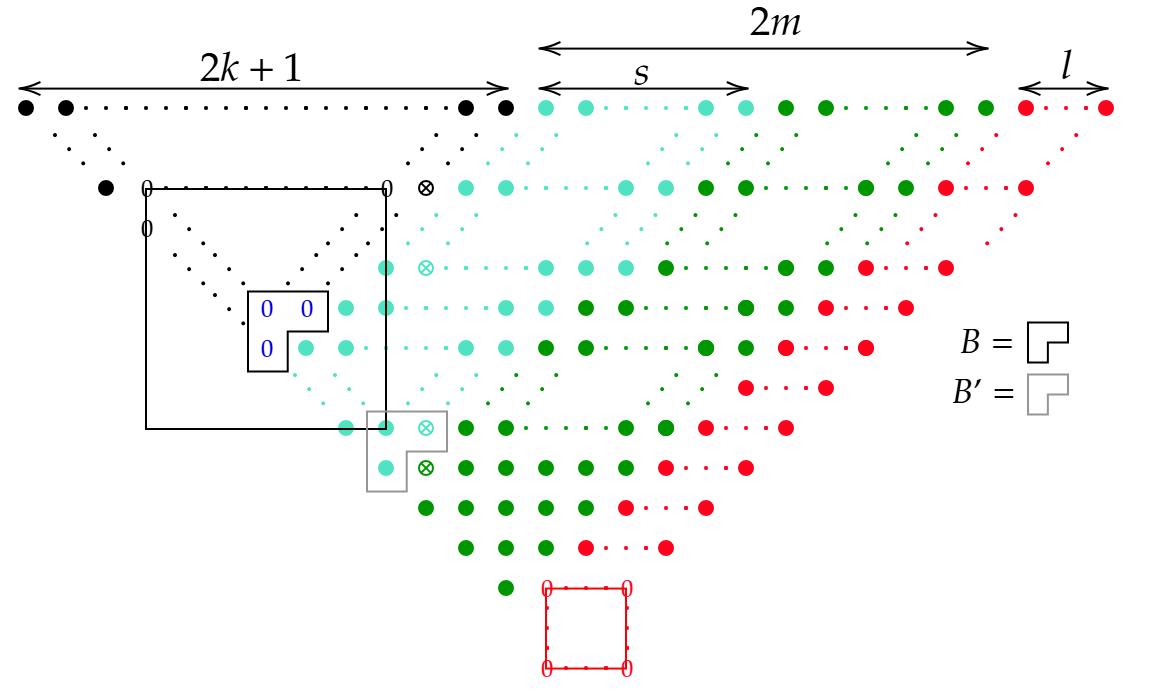}
    \caption{The number wall $W_q( \mathbf{S}_k)$ (black dots, zeroes, crossed dot and rotated L-shape) has a window (black square) that contains the zero right-side blade $B$ (dark blue zeroes). This window is right-side closed. The right-side blade $B'$ of the extended sequence is outlined in grey.  }
    \label{3.5.6}
\end{figure} \noindent Applying Case 1 to any of the $q^s$ possible extensions completes the proof in this case.

\subsubsection*{Case 3}
\noindent This case occurs when the window $\mathcal{W}$ in $W_q( \mathbf{S}_k)$ that contains the right-side blade $B$ is right-side open. The proof is achieved by summing over all the possible sizes $\mathcal{W}$ could attain. To begin, let $u\in\N$ be such that $\mathcal{W}$ has its top left corner on row $k-u$, and let $v\in\N$ be such that $\mathcal{W}$ has its bottom right corner on row $k+v$. Hence, $\mathcal{W}$ has side length $u+v+1$. There are $q-1$ ways to extend $ \mathbf{S}_k$ by one digit (into a sequence of length $2k+2$) in such a way that $\mathcal{W}$ becomes right-side closed. Then, the number wall of \textit{any} extension of $ \mathbf{S}_k$ of length $2v+1$ (such that $ \mathbf{S}_k$ becomes a sequence of length $2k+2v+3$) will have a nonzero right-side blade $B'$. 

\begin{figure}[H]
    \centering
    \includegraphics[width=0.8\linewidth]{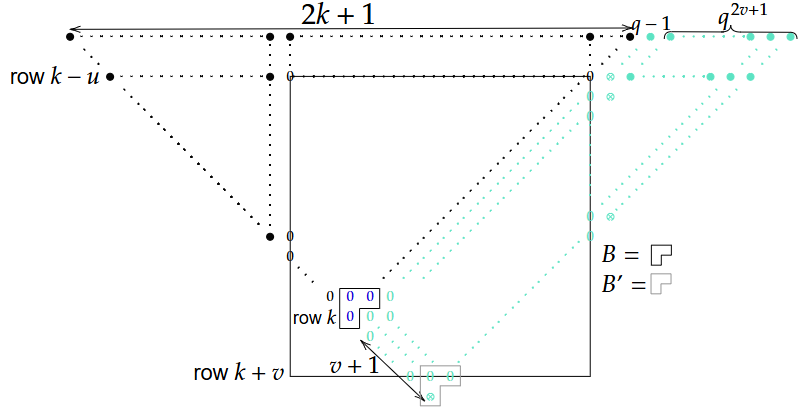}
    \caption{The window $\mathcal{W}$ is depicted by the black square. It can be closed in $q-1$ ways, represented by the first light blue dot on row zero, and then the next $q^{2v+1}$ entries of $ \mathbf{S}_k$ can be chosen freely to obtain the right-side blade $B'$}
\end{figure}
\begin{remark}In the figure, $B'$ is the $\XOO$ right-side blade. It could also be the $\XOX$ or the $\OOX$ right-side blade, depending on the position of $\mathcal{W}$. This does not effect the calculations.\end{remark}

\noindent From here, Case 1 implies that there are $q^{2m-2v-2}$ continuations of $ \mathbf{S}_k$ of length $2m-2v+2+l$ whose number walls have windows containing $\Box$, contributing $(q-1)q^{2m-1}$ to the total number of possible extensions to $ \mathbf{S}_k$ that result in a finite sequence of length $2k+2m+l+1$ and whose number wall has a window containing $\Box$. \\

\noindent Now, instead of closing $\mathcal{W}$, one could extend it by length $j$ (by Proposition \ref{wind_extend_lem}) for any $j\in\N$ such that $\mathcal{W}$ nor its inner frame (should it be closed at that size) intersect $\Box$. This is illustrated below. \begin{figure}[H]
    \centering
    \includegraphics[width=01\linewidth]{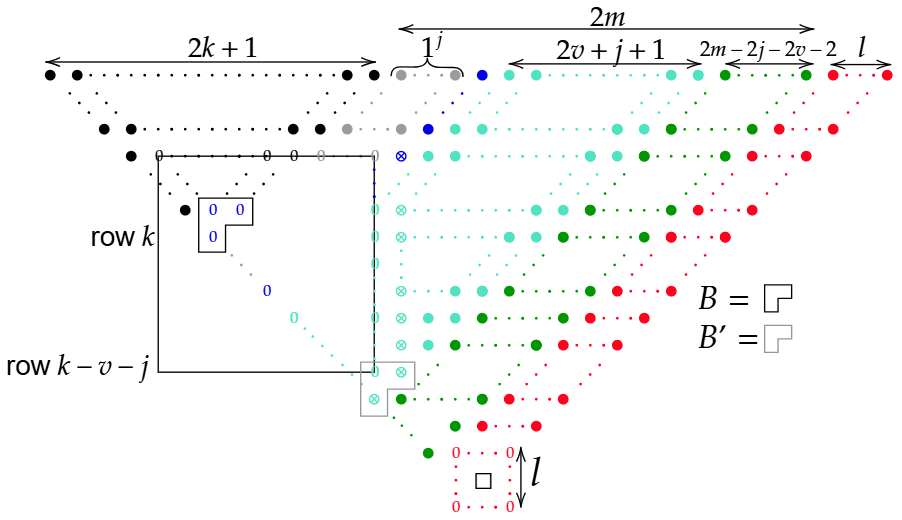}
    \caption{The window $\mathcal{W}$ (black square) has been extended by $j$ entries (grey dots in row zero) before being closed (top row, dark blue dot).}
\end{figure}

\noindent In this case, $ \mathbf{S}_k$ is uniquely extended by $j$ entries before one further entry (of which are there $q-1$ choices) is appended to the end in order to close $\mathcal{W}$. This increases the row index of the bottom row of $\mathcal{W}$ by $j$, meaning that $ \mathbf{S}_k$ can extended by $2v+j+1$ digits, all of which are chosen with complete freedom, and the resulting number wall will have the right-side blade profile $\XOO$. Then, as was just explained in the instance $j=0$, one applies Lemma \ref{contain} to obtain a contribution of $q^{2v+j+1}\cdot(q-1)q^{2m-2j-2v-2}=(q-1)q^{2m-j-1}$ to the total number of extensions of $ \mathbf{S}_k$ that have a window containing $\Box$.
\noindent The value of $j$ ranges from $0$ to $m-v-1$, and hence summing over all values of $j$ yields \begin{equation}\label{eqn: corections1}
    (q-1)\sum^{m-v-1}_{j=0}q^{2m-1-j} = q^{2m} - q^{m+v}
\end{equation}
\noindent Finally, one must consider the case where $\mathcal{W}$ is expanded to contain $\Box$. 
\begin{figure}[H]
    \centering
    \includegraphics[width=1\linewidth]{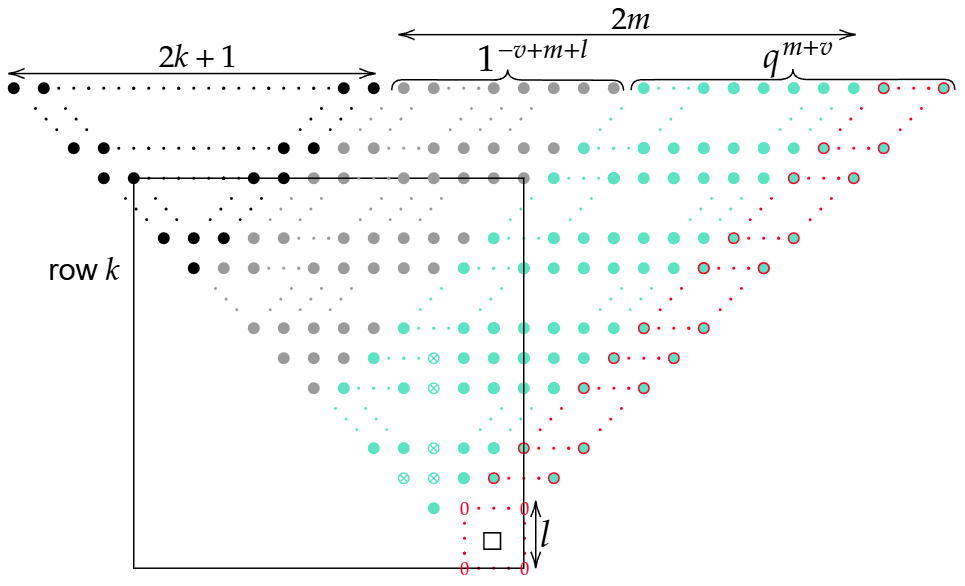}
    \caption{The dot diagram when window $\mathcal{W}$ (black square) now contains $\Box$ (red square). The entries that were chosen uniquely to create $\Box$ (red in Figure \ref{3.5.6}) can now be chosen arbitrarily. }
\end{figure}
\noindent That is, $\mathcal{W}$ has been extended by $m-v+l$ entries. As there is no longer any need to close $\mathcal{W}$, this contributes $q^{m+v}$ to the final sum. Combining this with equation (\ref{eqn: corections1}) implies that the total sum is\begin{align}
q^{m+v}+q^{2m}-q^{m+v}=q^{2m}.\label{wind cont sum}
\end{align}

\noindent This concludes Case 3.

\subsubsection*{Case 4}

\noindent Most of the hard work for this case has already been done. The left image in the following figure illustrates the set up. The right image is used later in the proof.\begin{figure}[H]
    \centering
    \includegraphics[width=1\linewidth]{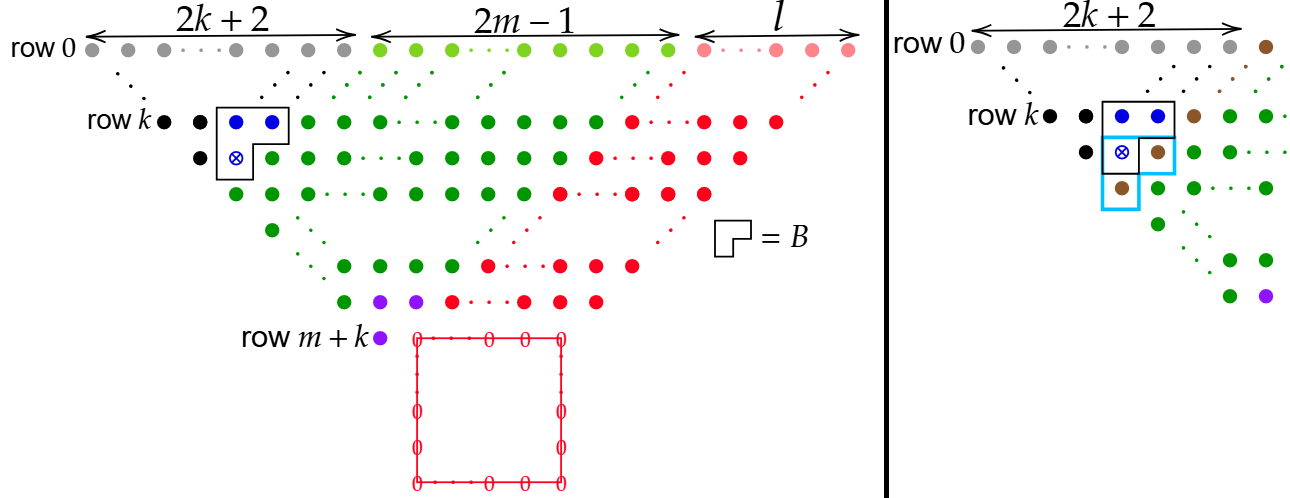}
    \caption{\textbf{Left}: The set up of Case 4, with the same colours as Figure \ref{Contain_proof_fig1}. \textbf{Right}: Assume the bottom entry of the right-side blade $B$ (crossed blue dot) is nonzero. Then any single digit extension (brown dot) to $ \mathbf{S}_k$ (grey dots) will result in a right-side blade $B'$ (light blue reversed L-shape figure) that is nonzero.}\label{fig:contain2.1}
\end{figure}

\noindent As the bottom entry of the right-side blade $B$ is nonzero, every single-digit extension to $ \mathbf{S}_k$ will result a sequence of length $2k+3$ whose finite number wall has a nonzero right-side blade. This is illustrated in the right image of Figure \ref{fig:contain2.1}. Thus, this reduces to the $i=1$ case and so there are $q\cdot q^{2m-2}$ possible extensions. Multiply this by $q$, which corresponds to the arbitrary single-digit extension to $ \mathbf{S}_k$, completes this case.

\subsubsection*{Case 5}
\noindent This case is almost identical to Case 2, and so only the differences are provided.  Indeed, if $B$ were the zero right-side blade, the only change from Case 2 would be that the value of $s$ had decreased by one. \\

\noindent If $B$ were not the zero right-side blade, then by the assumption that the window containing the lowest element of $B$ is right-side closed, $B=~\OOX$. Then, any three-digit extension to $ \mathbf{S}_k$ reduces this case back to Case 1 with $2k+1$ replaced with $2k+5$. 

\subsubsection*{Case 6}
\noindent This is very similar to Case 3. Indeed, the only significant difference is that (borrowing notation from Case 3) instead of $q^{2v+j+1}$ entries being chosen arbitrarily, one has $q^{2v+j}$ such entries instead. One other minor change is that the incomplete window $\mathcal{W}$ could be of size $1\times1$, which would make $v=0$.
\end{proof}
\subsubsection*{Establishing Corollary \ref{contain full}}
\begin{proof}[Proof of Corollary \ref{contain full}]
\noindent First, the condition that $r\ge m+n+l$ is required so that the finite sequence is long enough to determine if there is a window (of any kind) containing $\Box$. This, along with the rest of this proof, is depicted below. \begin{figure}[H]
    \centering
    \includegraphics[width=1\linewidth]{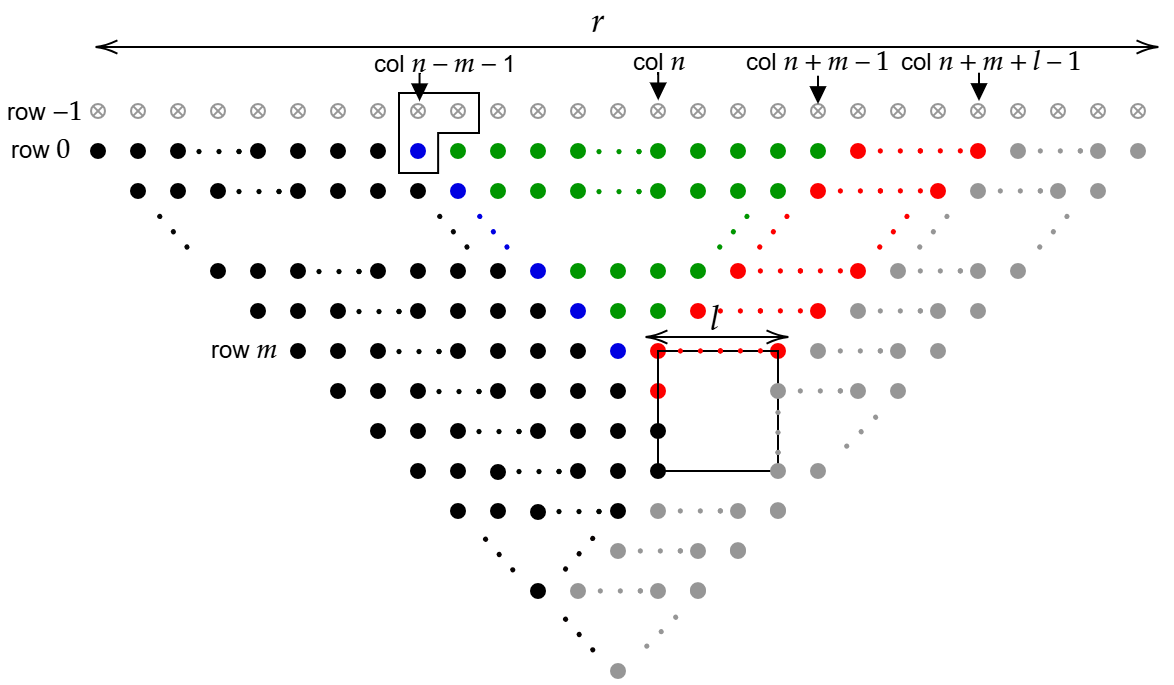}
    \caption{In order for the number wall of a finite sequence of length $r$ to have a window that contains $\Box$ (black square), it is necessary that $r>n+m+l$. The entries making up a minimal generator of $\Box$ are given by the green and red entries in row $0$.}
\end{figure} \noindent The entries of the sequence with index $n-m-1\le i \le n+m+l-1$ are calculated first. The first entry in this range is picked arbitrarily. Since row $-1$ is made of entirely 1s, this forms a right-side blade of profile $\XXX$ or $\OXX$. Either way, Lemma \ref{contain} is applied to show there are $q^{2m}$ extensions to this single digit sequence that result in a sequence whose number wall has a window (of any kind) containing $\Box$. The choices for the remaining entries of the sequence have no effect,  giving a total number of sequences of length $r$ as $$q^{r-2m-1-l}\cdot q^{2m+1}=q^{r-l}.$$ This concludes the proof.
\end{proof}
\subsection[Counting Sequences with Windows Containing \\Rectangular Portions]{Counting Sequences with Windows Containing Rectangular Portions}\label{Sect:Rect}
\noindent The goal of this section is to prove Lemma \ref{bound}, which is an immediate corollary of Lemma \ref{Rect} below which is a stronger result that counts precisely how many finite sequences of length $r\in\N$ exist whose finite number walls have (complete or incomplete) windows containing a given rectangular portion. Recall the notation $R_{r,q}(l,d,m,n)$ from Definition \ref{Rect_prtn}.

\begin{lemma}\label{Rect}
   Let $r,n,m,l\in\N$, $d\in\Z$ satisfying $m-n\le d\le l-1$ and let $q$ be a prime power. Assume also that $r>2(n+m+l)$. Then, \begin{enumerate}
        \item If $m-n\le d\le0$, then $\#R_{r,q}(l,d,m,n)=$ \begin{equation}\frac{q^{r-(l+2m-d+1)}}{q+1}\left((1-d)\cdot q^{2m+2}+(d+1)\cdot q^{2m+1}-d\cdot(q-1)\right).\label{vertrec}\end{equation}
       \item If $0< d\le m$, then $\#R_{r,q}(l,d,m,n)=$\begin{equation}\frac{q^{r-(l+2m)}}{q+1}\cdot \left((d+1)\cdot q^{2m+1} -(d-1)\cdot q^{2m}- \frac{q^{2d}-1}{q+1}\right).\label{horrec}\end{equation}
       \item If $d>m$, then equation (\ref{horrec}) holds upon replacing $d$ with $m$ in the right-hand side.  
   \end{enumerate} 
   \noindent 
\end{lemma}

\begin{remark}\label{rect_remark}
     The condition that $r/2>n+m+l$ ensures that at least one of the longest sides of the rectangle specified by $R_{r,q}({l,d,m,n})$ is fully contained in the number wall, as well as all the subsequent rectangular portions that are involved in the upcoming proof. In practice, Lemma \ref{Rect} is applied to calculate the measure of a set of Laurent series that have windows containing a given rectangular portion, and so $r$ is considered to be arbitrarily large.  
\end{remark}
\begin{remark}
    When $d=0$, equation (\ref{vertrec}) reduces to Corollary \ref{contain full}.
\end{remark} 

\noindent Corollary \ref{contain full} and Lemma \ref{Rect} provide a complete picture for counting how many finite sequences have a window containing any connected portion of zeros in their number wall. Even if the portion of zeros is not in a rectangular shape, a minimally sized rectangle can be drawn around it and the Square Window Theorem (Theorem \ref{window}) shows that this rectangle is also fully zero. 
\begin{proof}[Proof of Lemma \ref{Rect}]
\noindent Equations (\ref{vertrec}) and (\ref{horrec}) are proved by induction on $d$, where the base case of each occurs when $d=\pm1$. To complete the induction, recurrence relations are proved that allow one to calculate $R_{r,q}(l,d,m,n)$ in terms of $R_{r,q}(l,d',m,n)$ for $|d'|<|d|$. \\

\noindent The proof of Lemma \ref{Rect} is split into four parts. The first establishes recurrence relations that are used to complete the base case of the induction (part 2). The inductive step is completed in the third part, before the third statement of Lemma \ref{Rect} is proven in the final part.\subsubsection*{Calculating $R_{r,q}(l,d,m,n)$ with a Recurrence Relation in $d$.} 
\noindent  Assume first that $m-n\le d<0$. Let $\mathbf{S}$ be a sequence in $R_{r,q}(l,d,m,n)$. Then, there are three possibilities:\begin{enumerate}
\item[(1.1)] $\textbf{S}\in R_{r,q}({l+1,d+1,m,n-1})$ and $\textbf{S}\not\in R_{r,q}({l+1,d+1,m,n})$,
\item[(1.2)] $\textbf{S}\not\in R_{r,q}({l+1,d+1,m,n-1})$ and $\textbf{S}\in R_{r,q}({l+1,d+1,m,n})$,
\item[(1.3)] $\textbf{S}\in R_{r,q}({l+1,d+1,m,n-1})$ and $\textbf{S}\in R_{r,q}({l+1,d+1,m,n})$.
\end{enumerate}
\noindent These three cases are illustrated by the left image in Figure \ref{rect pic}. For now, ignore the right picture.
\begin{figure}[H]
    \centering
    \includegraphics[width=1\linewidth]{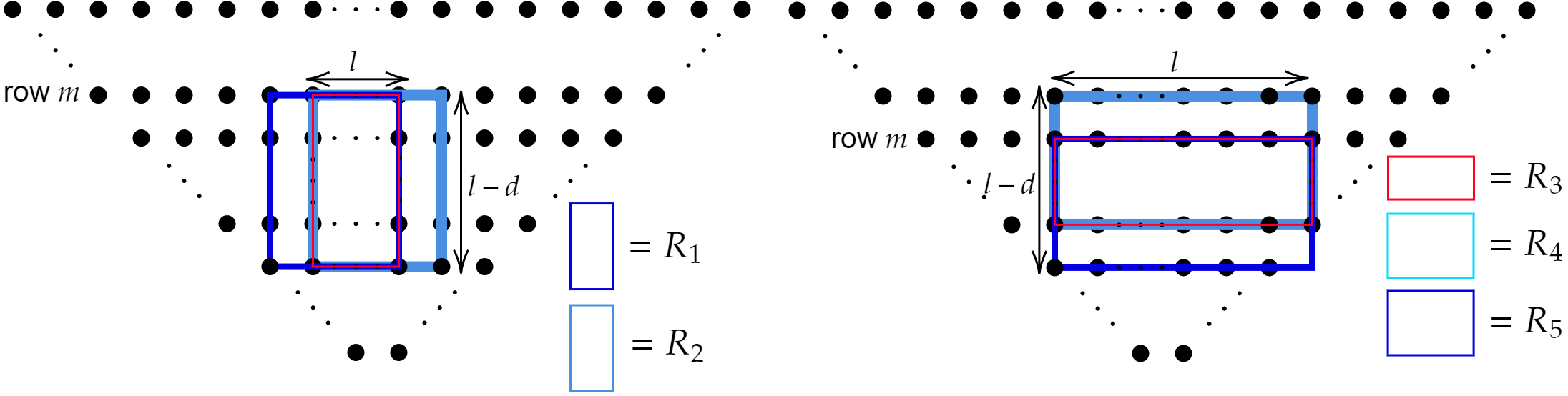}
    \caption{\textbf{Left:} When $d<0$, the sequences in $R_{r,q}(l,d,m,n)$ have a rectangular zero portion (represented by the red rectangle) in their number wall. The sequences in case (1.1) above have a window containing the rectangular portion $R_1$ and not $R_2$ in their number wall. Similarly, if a sequence is in case (1.2), its number wall has a window containing the rectangular portion $R_2$ and not $R_1$. If a sequence is in case (1.3), it has a single window containing both $R_1$ and $R_2$. \textbf{Right:} When $d>0$, the sequences in $R_{r,q}(l,d,m,n)$ generate a number wall such that every entry in the rectangular portion represented by the red rectangle $R_3$ is zero. This means the number walls either have a window containing only this rectangular portion $R_4$, only $R_5$ or both.}
    \label{rect pic}
\end{figure}
\noindent If $\mathbf{S}$ is in case (1.3), then $S\in R_{r,q}({l+2,d+2,m,n-1})$. Also, the number of sequences in case (1.1) is \[\#R_{r,q}({l+1,d+1,m,n-1})-\#R_{r,q}({l+2,d+2,m,n-1}).\] Similarly, the number of sequences in case (1.2) is \[\#R_{r,q}({l+1,d+1,m,n})-\#R_{r,q}({l+2,d+2,m,n-1}).\]
Summing over the three cases gives the formula \begin{align}\#R_{r,q}(l,d,m,n)&=\#R_{r,q}({l+1,d+1,m,n-1})+\#R_{r,q}({l+1,d+1,m,n})\nonumber\\&-\#R_{r,q}({l+2,d+2,m,n-1}).\label{vert_rec_eqn}\end{align}
\noindent Note that, if the top right corner of the specified rectangle is on the right-most diagonal (or the top left corner on the left-most diagonal) of the finite number wall, this method fails. However, in this case the specified rectangle is square (as its height is more than its width and the longest side is assumed to be fully on the number wall by the condition $r>2(n+m+l)$ and Remark \ref{rect_remark}) and consequently the problem reduces to Corollary \ref{contain full}.\\

\noindent The method is fundamentally identical for the proof of the case $0<d\le m$. By the same method as in the $d<0$ case, one then obtains  \begin{align}\#R_{r,q}(l,d,m,n)=\#R_{r,q}({l,d-1,m,n})+\#R_{r,q}({l,d-1,m-1,n})\nonumber\\-\#R_{r,q}({l,d-2,m-1,n}).\label{Rect6}\end{align} This is illustrated by the right-side diagram of Figure \ref{rect pic}.
\subsubsection*{The Base Case of the Induction}
\noindent The base case of the induction is now proven, starting with $d=-1$. Using equation (\ref{vert_rec_eqn}), one obtains that \begin{align}
    \#R_{r,q}({l,-1,m,n})=&\#R_{r,q}({l+1,0,m,n-1})+\#R_{r,q}({l+1,0,m,n})\nonumber\\&-\#R_{r,q}({l+2,1,m,n}).\label{rect_base_case_1}
\end{align}
\noindent Then, applying equation (\ref{Rect6}) to the final term of equation (\ref{rect_base_case_1}) gives \begin{align}
    (\ref{rect_base_case_1})=&\#R_{r,q}({l+1,0,m,n-1})+\#R_{r,q}({l+1,0,m,n})\nonumber\\&-\#R_{r,q}({l+2,0,m,n-1})-\#R_{r,q}({l+2,0,m-1,n-1})\nonumber\\&+\#R_{r,q}({l+2,-1,m-1,n-1}).\label{rect_base_case_2}
\end{align}
\noindent In four of the five terms in Equation (\ref{rect_base_case_2}), one has that $d=0$ and hence Corollary \ref{contain full} implies that $\#R_{r,q}({l,0,m,n})=q^{r-l}$. Therefore, one obtains \begin{equation}\label{rect_base_case_3}
    \#R_{r,q}({l,-1,m,n})= 2q^{r-(l+1)}-2q^{r-(l+2)} + \#R_{r,q}({l+2,-1,m-1,n-1}).
\end{equation}The final term of equation (\ref{rect_base_case_3}) is identical to $\#R_{r,q}({l,-1,m,n})$ but with $l$ increased by 2 and $m$ and $n$ decreased by 1. A simple induction on $m$ yields \begin{equation}
    (\ref{rect_base_case_3})=\sum_{i=0}^{m-1} \left(2q^{r-(l+2i+1)}-2q^{r-(l+2i+2)}\right) +\#R_{r,q}({l+2m,-1,0,n-m}).\label{rect_base_case_4}
\end{equation}
\noindent By the same method, one has that \begin{align*}\#R_{r,q}({l+2m,-1,n-m,0})=&\#R_{r,q}({l+2m+1,0,0,n-m-1})\\&+\#R_{r,q}({l+2m+1,0,0,n-m})\\&-\#R_{r,q}({l+2m+2,0,0,n-m-1})\\
=& 2q^{r-(l+2m+1)}-(q^{r-(l+2m+2)}+0),\end{align*} as $\#R_{r,q}({l,0,-1,n})=0$ for all $n,l\in\N$. Combining this with equation (\ref{rect_base_case_4}) gives\begin{align*}
    (\ref{rect_base_case_4})&=\sum_{i=0}^{m} \left(2q^{r-(l+2i+1)}-2q^{r-(l+2i+2)}\right) + q^{r-(l+2m+2)}\\&=\frac{q^{r-(l+2m+2)}}{q+1}(2q^{2m+2}+q-1),
\end{align*}
\noindent which proves the base case of equation (\ref{vertrec}). The $d=1$ case is fundamentally identical. Indeed, one then obtains that \begin{align*}
    \#R_{r,q}({l,1,m,n})=&2\sum_{i=0}^{m-1}\left(q^{r-(l+2i)}-q^{r-(l+2i+1)}\right) + \#R_{r,q}({l+2m,1,0,n-m})\\
    &=2\sum_{i=0}^{m-1}\left(q^{r-(l+2i)}-q^{r-(l+2i+1)}\right) + q^{r-(l+2m)}\\
    &= \frac{q^{r-(l+2m)}}{q+1}(2q^{2m+1}-(q-1))
\end{align*}
\noindent as required. Note, that to prove equation (\ref{horrec}) by induction, one also needs to establish the case where $d=2$, since equation (\ref{horrec}) does not hold when $d=0$. This is done just as before, by applying the recurrence relations from equation (\ref{Rect6}) along with Corollary \ref{contain full}. This gives \[\#R_{r,q}(l,2,m,n)=\frac{q^{r-(l+2m)}}{q+1}\left(3q^{2m+1}-q^{2m}-(q-1)(q^2+1)\right),\] which agrees with equation (\ref{horrec}).
\subsubsection*{The Induction Step}
\noindent The induction step is completed by using the induction hypothesis to apply equation (\ref{vertrec}) to equation (\ref{vert_rec_eqn}) (or equation (\ref{horrec}) to equation (\ref{Rect6})) and then undergoing a simple calculation. Indeed, when $d<0$, $\#R_{r,q}(l,d,m,n)~=~$\begin{align}&2\cdot \frac{q^{r-(l+2m-d+1)}}{q+1}\left(-d\cdot q^{2m+2}+(d+2)\cdot q^{2m+1}-(d+1)\cdot(q-1)\phantom{q^d}\hspace{-0.4cm}\right)\nonumber\\-&\frac{q^{r-(l+2m-d+1)}}{q+1}\left((-d-1)\cdot q^{2m+2}+(d+3)\cdot q^{2m+1}-(d+2)\cdot(q-1)\right)\nonumber\\=~&\frac{q^{r-(l+2m-d+1)}}{q+1}\left((1-d)\cdot q^{2m+2}+(d+1)\cdot q^{2m+1}-d\cdot(q-1)\right).\end{align} 

\noindent Equation (\ref{horrec}) is proved similarly. 

\subsubsection*{Proving Part 3 of Lemma \ref{Rect}}
\noindent Suppose $d>m$ and let $R_6$ be the rectangle starting in column $n$ and row $m$ with width $l$ and height $l-d$. This implies that the $d-m$ rows underneath $R_6$ are also zero. This is illustrated below:\begin{figure}[H]
    \centering
    \includegraphics[width=0.75\linewidth]{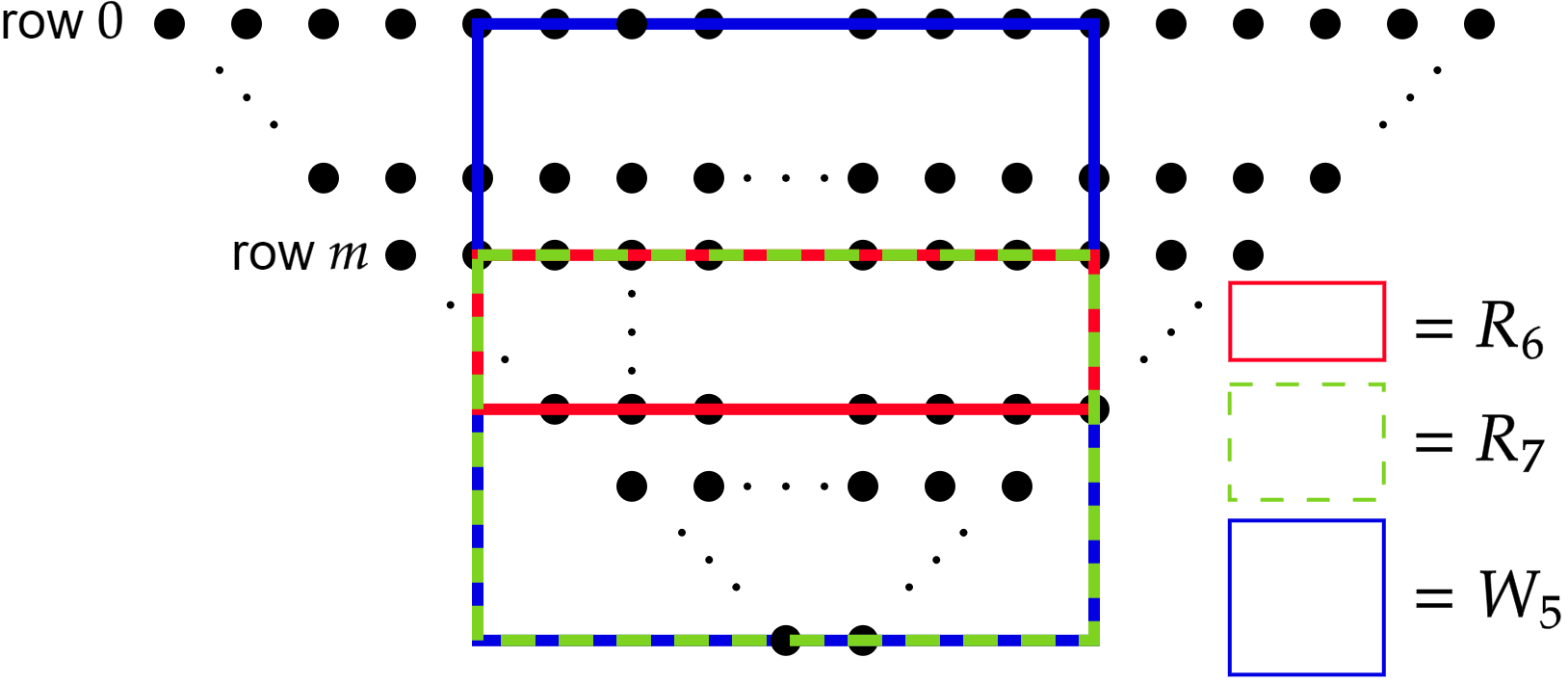}
    \caption{The difference in the height and width of $R_6$ (red rectangle) is greater than its starting depth. The window $W_5$ (blue square) starts on row 0 and contains $R_6$. The green dotted rectangle $R_7$ shows the rows which must be contained in any window containing $R_6$.}
    \label{Rect 4}
\end{figure}
\vspace{-0.4cm}
\noindent The Square Window Theorem (Theorem \ref{window}) implies the rectangular portion $R_6$ must be fully contained in a single square window. Furthermore, the window must have side lengths greater than or equal to $l$.  As a window cannot start higher than row zero, there is no window that contains $R_6$ that does not also contain the rectangular portion $R_7$; namely, the additional $m-d$ rows underneath $R_6$. The difference between the width and height of $R_7$ is $m$, reducing the question to the previous case. This concludes the proof.
\end{proof}\vspace{-0.6cm}

\subsection{Counting Finite Sequences with Windows Containing Pairs of Square Portions}
\noindent This subsection proves Lemma \ref{twowind}. Recall that, given square portions $\Box_1=\Box(l_1,n_1,m_1)$ and $\Box_2=\Box(l_2,n_2,m_2)$, the set $W_{r,q}(\Box_1,\Box_2)$ refers to all the sequences of length $r$ over $\F_q$ whose number wall has a pair of windows that each contain one of $\Box_1$ and $\Box_2$.  The proof of Lemma \ref{twowind} shows that the bound given by equation (\ref{twowindbound}) is sharp in many cases. \\\vspace{-0.4cm}

\subsubsection*{Minimal Generators of Two Square Portions}
\noindent Before the proof begins, the definition of a \textbf{minimal generator} of a square portion is extended to be defined for two square portions in the natural way. That is, given two square portions $\Box_1(l_1,m_1,n_1)$ and $\Box_2(l_2,m_2,n_2)$, a minimal generator of $\Box_1$ and $\Box_2$ is any finite sequence $\mathbf{M}(\Box_1,\Box_2)$ that satisfies following two properties. \begin{itemize}
    \item $W_q(\mathbf{M}(\Box_1,\Box_2))$ has a (complete or incomplete) window or pair of windows containing both $\Box_1$ and $\Box_2$.
    \item If any entry is removed from $\mathbf{M}(\Box_1,\Box_2)$, the previous condition fails.
\end{itemize}  
\noindent Recall that for an infinite sequence $\mathbf{S}=(s_i)_{i\ge0}$ whose number wall has a (complete or incomplete) window containing $\Box_1$, a minimal generator of $\Box_1$ is given by the entries $(s_i)_{n_1-m_1\le i \le n_1+m_1+l_1}$. Then, $\mathbf{M}(\Box_1,\Box_2)$ is a finite sequence $(s_i)$ with indices $i$ in the range\begin{equation}\label{min_gen_two}
    \min\{n_1-m_1,n_2-m_2\}\le i \le \max\{n_1+m_1+l_1, n_2+m_2+l_2\}
\end{equation} 
\begin{proof}[Proof of Lemma \ref{twowind}]

\noindent Let $M$ be the set of all minimal generators for $\Box_1$ and $\Box_2$ and let $$L_M=\max\{n_1+m_1+l_1, n_2+m_2+l_2\}-\min\{n_1-m_1,n_2-m_2\} +1$$ be the length of any finite sequence in $M$. The goal is to show that the cardinality of $M$ is bounded from above by $cq^{L_M-l_1-l_2}$ for some constant $c>0$ that does not depend on $\Box_1$ or $\Box_2$. If this is so, the remaining choices for the sequence of length $r$ are arbitrary and one has $$\#W_{r,q}(\Box_1,\Box_2)\ll_q q^{r-L_M}\cdot q^{L_M-l_1-l_2}.$$ 
\subsubsection*{Division into Cases}
\noindent  Without loss of generality, assume that\begin{equation}\vspace{-0.2cm}
    m_2\ge m_1. \label{low_to_right}
\end{equation}That is, $\Box_2$ is the `lower' of the two square regions. Define the finite sequence $\mathbf{M}(\Box_1)=(m_i^{(1)})_{0\le i \le 2m_1+l+1}$ as a minimal generator of $\Box_1$ and let $\mathbf{M}(\Box_2)=(m_i^{(2)})_{0\le i \le 2m_2+l+1}$ be a minimal generator of $\Box_2$. The proof is split into three cases, depending on the inequality that the natural numbers $n_1,n_2,m_1,m_2,l_1$ and $l_2$ satisfy:\begin{itemize}
    
\item[]{\textbf{Case 1:}} Minimal generators $\mathbf{M}(\Box_1)$ and $\mathbf{M}(\Box_2)$ have no overlap. That is,$$n_1-m_1<n_1+m_1+l_1<n_2-m_2<n_2+m_2+l_2;$$  

\item[]{\textbf{Case 2:}} For some $k\in\N$, the final $k$ entries of $\mathbf{M}(\Box_1)$ overlaps with the first $k$ entries of $\mathbf{M}(\Box_2)$ but neither $\mathbf{M}(\Box_1)$ or $\mathbf{M}(\Box_2)$ is a subsequence of the other. That is, $$n_1-m_1<n_2-m_2\le n_1+m_1+l_1< n_2+m_2+l_2;$$

\item[]{\textbf{Case 3:}} Minimal generator $\mathbf{M}(\Box_1)$ is a subsequence of $\mathbf{M}(\Box_2)$. That is, $$n_2-m_2\le n_1-m_1 < n_1+m_1+l_1<n_2+m_2+l_2.$$
\end{itemize}
\noindent Any other possible cases are either identical to one of the above by Lemma \ref{reflect} (vertical symmetry of finite number walls) or are ruled out by the assumption (\ref{low_to_right}). 

\subsubsection*{Case 1}
\noindent In this case, let $d$ be the number of entries in the generating sequence between any two given minimal generators of $\Box_1$ and $\Box_2$.\vspace{-0.35cm} \begin{figure}[H]
    \centering
    \includegraphics[width=1\linewidth]{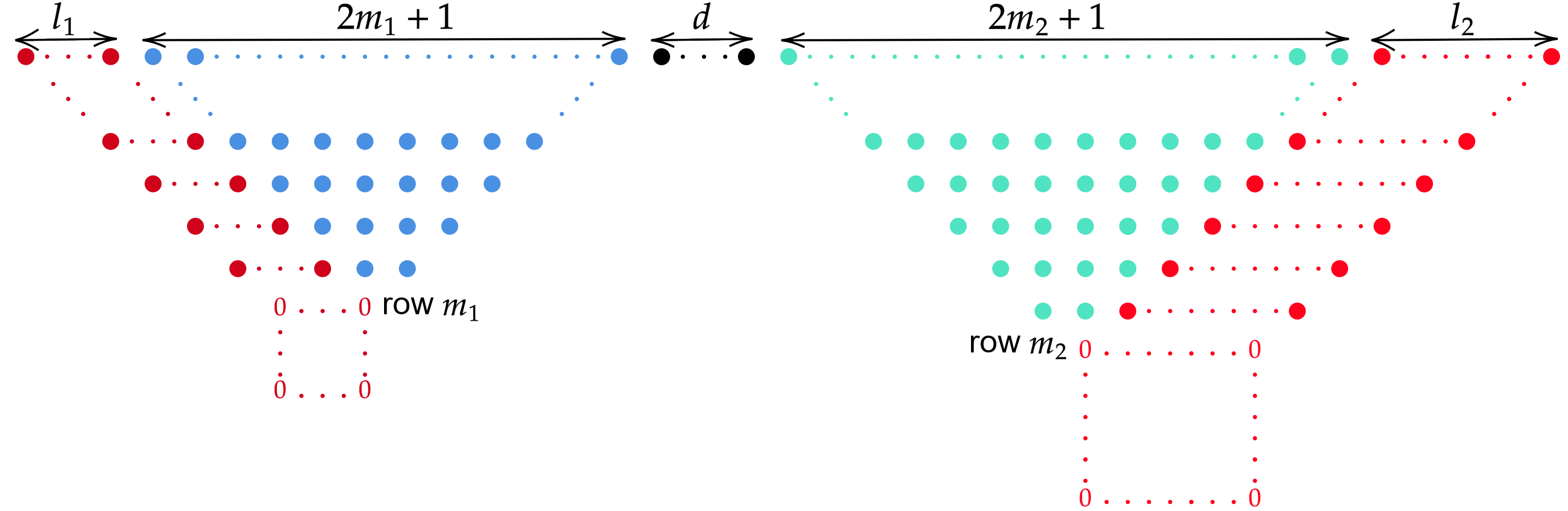}
    \caption{Two minimal generators for $\Box_1$ and $\Box_2$ respectively are separated by $d$ entries.}
\end{figure}\vspace{-0.35cm}\noindent From the original definition of a number wall (Definition \ref{nw}), the Toeplitz matrices making up $\Box_1$ and $\Box_2$ and their respective minimal walls share no entry. Hence, Corollary \ref{contain full} can be applied twice to show there are $q^{2m_1+1}$ choices for a minimal generator of $\Box_1$, and similarly for $\Box_2$. Therefore, each entry of $\mathbf{M}(\Box_1,\Box_2)$ is a finite sequence with length $L_M=2m_1+2m_2+l_1+l_2+d+2$ and $\mathbf{M}(\Box_1,\Box_2)$ has cardinality $q^{L_M-l_1-l_2}$, completing the proof in this case.
\subsubsection*{Case 2}
\noindent Case 2 is illustrated below.
\begin{figure}[H]
    \centering
    \includegraphics[width=0.65\linewidth]{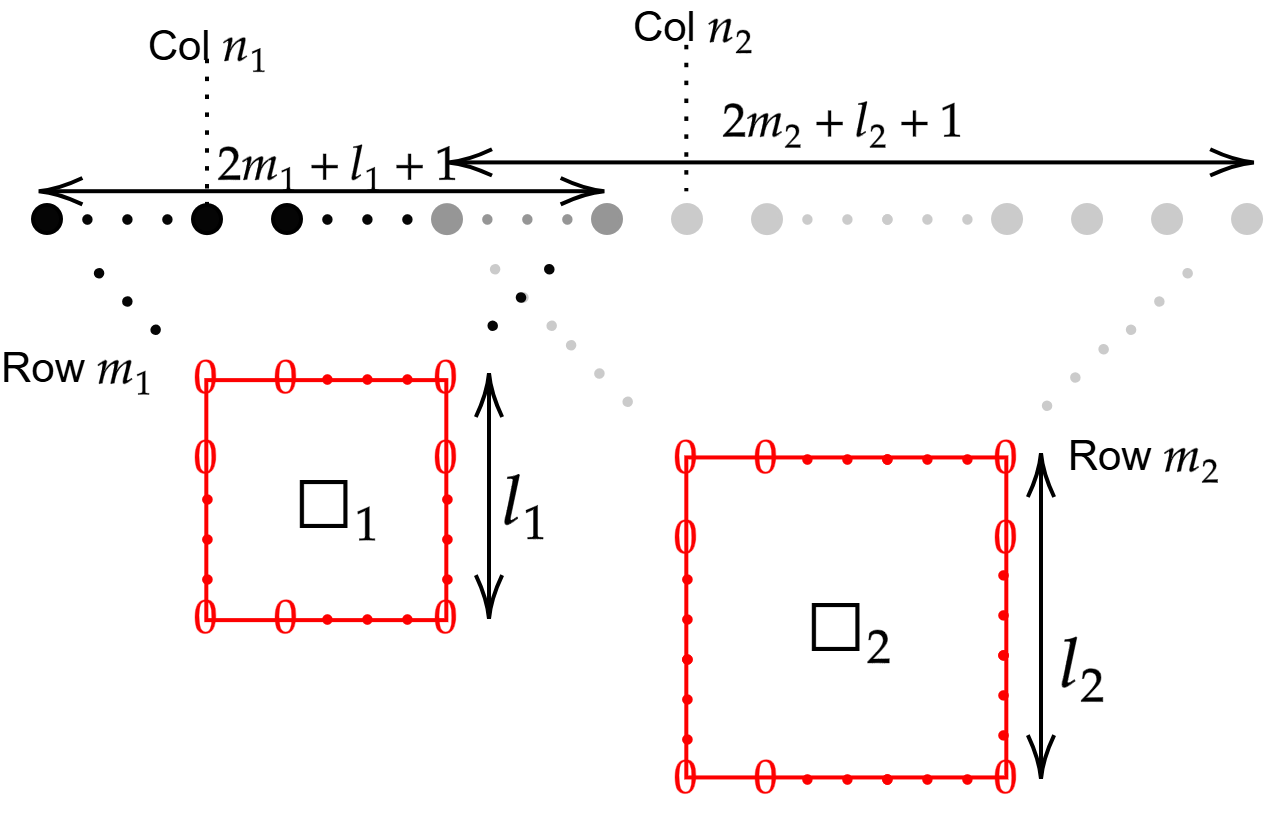}
    \caption{\textbf{Case 2:} $\mathbf{M}(\Box_1)$ (black and dark grey dots, top row) and $\mathbf{M}(\Box_2)$ (light grey and dark grey dots, top row) overlap but one is not contained within the other. The intersection of $\mathbf{M}(\Box_1)$ and $\mathbf{M}(\Box_2)$ is in dark grey.}
\end{figure}

\noindent Recall that $\oplus$ denotes the operator that concatenates finite sequences. To begin, $\mathbf{M}(\Box_1,\Box_2)$ is split into three parts in the following way.
\[\mathbf{M}(\Box_1,\Box_2)=\mathbf{P}(\Box_1)\oplus \mathbf{I}(\Box_1,\Box_2) \oplus \mathbf{P}(\Box_2),\] where $\mathbf{M}(\Box_1)=\mathbf{P}(\Box_1)\oplus \mathbf{I}(\Box_1,\Box_2)$ and $\mathbf{M}(\Box_2)=\mathbf{I}(\Box_1,\Box_2)\oplus \mathbf{P}(\Box_2)$.  \begin{figure}[H]
    \centering
    \includegraphics[width=0.75\linewidth]{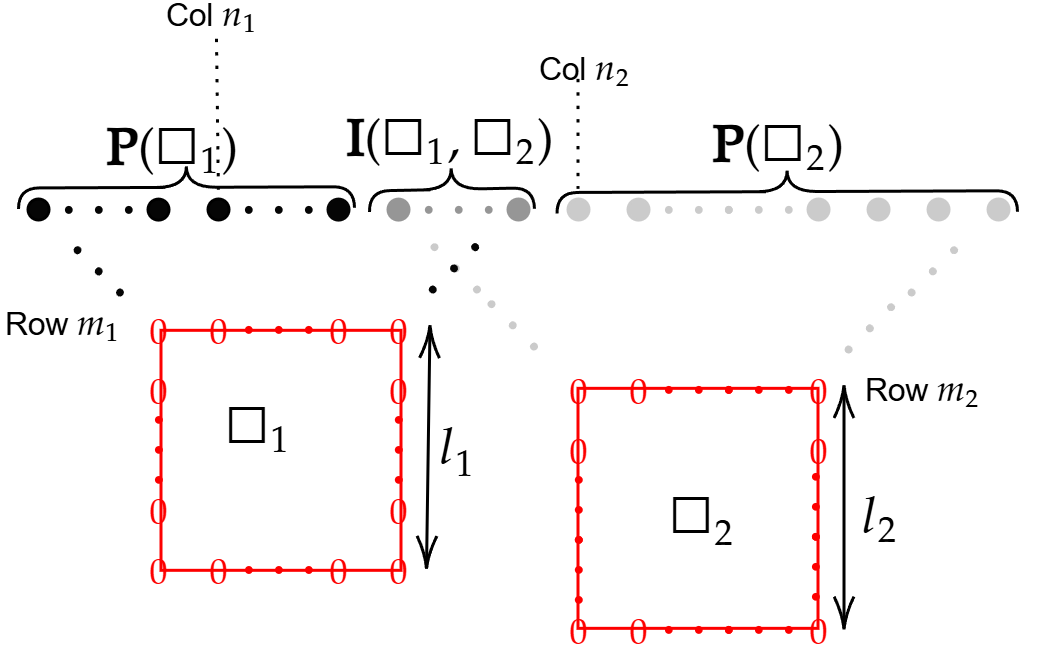}
    \caption{The minimal generator of $\Box_1$ and $\Box_2$ split into three parts.}\label{case2}
\end{figure}\noindent Note that the values of $\#\mathbf{P}(\Box_1)$, $\#\mathbf{I}(\Box_1,\Box_2)$ and $\#\mathbf{P}(\Box_2)$ (in terms of $m_1,m_2,n_1,n_2,l_1$ and $l_2$) are all easily calculable from the positions of $\Box_1$ and $\Box_2$. However, these are not required for this proof, and so they are omitted. The curious reader can calculate them from Figure \ref{case2}, if desired.\\

\noindent By Corollary 3.2.3, there are $q^{\#\mathbf{P}(\Box_1)+\# \mathbf{I}(\Box_1,\Box_2)-l_1}$ choices for $\mathbf{P}(\Box_1)\oplus \mathbf{I}(\Box_1,\Box_2)$ such that $W_q(\mathbf{P}(\Box_1)\oplus \mathbf{I}(\Box_1,\Box_2))$ has a window containing $\Box_1$. One now splits into two cases: \begin{itemize}
    
    \item[]\textbf{Case 2.1:} $W_q(\textbf{I}(\Box_1,\Box_2))$ does not have a window that intersects $\Box_2$.
    \item[]\textbf{Case 2.2:} $W_q(\textbf{I}(\Box_1,\Box_2))$ has a window that intersects $\Box_2$.
\end{itemize}In Case 2.1, Lemma 3.2.2 implies there are $q^{\#\mathbf{P}(\Box_2)-l_2}$ choices for $\mathbf{P}(\Box_2)$ such that $\textbf{P}(\Box_1)\oplus\textbf{I}(\Box_1,\Box_2)\oplus \mathbf{P}(\Box_2)\in W_{r,q}(\Box_1,\Box_2)$. As an over estimate, it is trivial that at most every choice of $\mathbf{P}(\Box_1)\oplus\mathbf{I}(\Box_1,\Box_2)$ falls in Case 2.1, which contributes \[q^{\#\mathbf{P}(\Box_1)+ \#\mathbf{I}(\Box_1,\Box_2)+\# \mathbf{P}(\Box_2)-l_1-l_2}=q^{r-l_1-l_2}\] to the total.\\

\noindent All that remains now is to consider all the choices for in $\textbf{P}(\Box_1)\oplus\mathbf{I}(\Box_1,\Box_2)$ such that $W_q(\mathbf{I}(\Box_1,\Box_2))$ does have a window (call it $\mathcal{W}$) intersecting $\Box_2$. \\

\noindent First, it can be assumed that $\mathcal{W}$ either contains $\Box_2$ or it can be extended to do so. Indeed, if neither of these statements are true then $\mathcal{W}$ is right-side closed and this instance contributes nothing to the total because none of the possible extensions to $\textbf{I}(\Box_1,\Box_2)$ would be in $W_{r,q}(\Box_1,\Box_2)$. In the latter case, the extension of $\textbf{I}(\Box_1,\Box_2)$ that makes $\mathcal{W}$ contain $\Box_2$ is unique by Proposition \ref{wind_extend_lem}.  Additionally, one can also assume that $\mathcal{W}$ does not contain $\Box_1$ (else both $\Box_1$ and $\Box_2$ would be contained in one window, in which case $\textbf{P}(\Box_1)\oplus\mathbf{I}(\Box_1,\Box_2)\oplus \textbf{P}(\Box_2)$ is not in $W_{r,q}(\Box_1,\Box_2)$).\\
 
 \noindent In summary, all that remains is to count how many choices there are for $\textbf{P}(\Box_1)\oplus\textbf{I}(\Box_1,\Box_2)\oplus\textbf{P}(\Box_2)$ such that $W_q(\textbf{I}(\Box_1,\Box_2)\oplus\textbf{P}(\Box_2))$ has a window $\mathcal{W}$ that contains $\Box_2$ and does not intersect $\Box_1$. This is precisely the same as Case 2.1, with the roles of $\Box_2$ and $\Box_1$ reversed, which contributes another $q^{r-l_1-l_2}$ to the total, completing the proof.

\subsubsection*{Case 3}
\noindent Similarly to Case 2, split any given minimal generator for $\Box_1$ and $\Box_2$ up into three disjoint parts\[\mathbf{M}(\Box_1,\Box_2)=\mathbf{P}_L(\Box_2)\oplus \mathbf{I}'(\Box_1) \oplus \mathbf{P}_R'(\Box_2)\] where $\textbf{I}'(\Box_1)$ is a minimal generator for $\Box_1$ and $\mathbf{P}_L'(\Box_2)$ and $\mathbf{P}_R'(\Box_2)$ are extensions to $\mathbf{I}'(\Box_1)$ on the left and right side respectively that generate $\Box_2$.\\

\noindent Case 3 is split into 2 sub-cases, depending on the relative positions of $\Box_1$ and $\Box_2$. To state these cases, recall that $\Box_2$ is on row $m_2\in\N$ and that the depth of a finite number wall generated by a sequence of length $r\in\N$ is equal to $\lceil r/2\rceil-1$. 

\begin{itemize}
    \item[3.(a)] Any finite sequences of length either $\#\mathbf{P}_L'(\Box_2)+ \#\mathbf{I}'(\Box_1)$ or $\#\mathbf{I}'(\Box_1)+\#\mathbf{P}_R'(\Box_2)$ generates a finite number wall of depth less than $m_2$. That is, \[\min\left(\left\lceil\frac{\#\mathbf{P}_L'(\Box_2)+\#\mathbf{I}'(\Box_1)}{2}\right\rceil-1,\left\lceil\frac{\#\mathbf{I}'(\Box_1)+\#\mathbf{P}_R'(\Box_2)}{2}\right\rceil-1\right)<m_2.\]
    \item[3.(b)] Finite sequences of length either $\#\mathbf{P}_L'(\Box_2)+ \#\mathbf{I}'(\Box_1)$ or $\#\mathbf{I}'(\Box_1)+\#\mathbf{P}_R'(\Box_2)$ generate a finite number walls of depth greater than or equal to $m_2$. That is, \[\min\left(\left\lceil\frac{\#\mathbf{P}_L'(\Box_2)+\#\mathbf{I}'(\Box_1)}{2}\right\rceil-1,\left\lceil\frac{\#\mathbf{I}'(\Box_1)+\#\mathbf{P}_R'(\Box_2)}{2}\right\rceil-1\right)\ge m_2.\]
\end{itemize}
\subsubsection*{Case 3.(a)}
\noindent This case is depicted below.

\begin{figure}[H]
    \centering
    \includegraphics[width=0.75\linewidth]{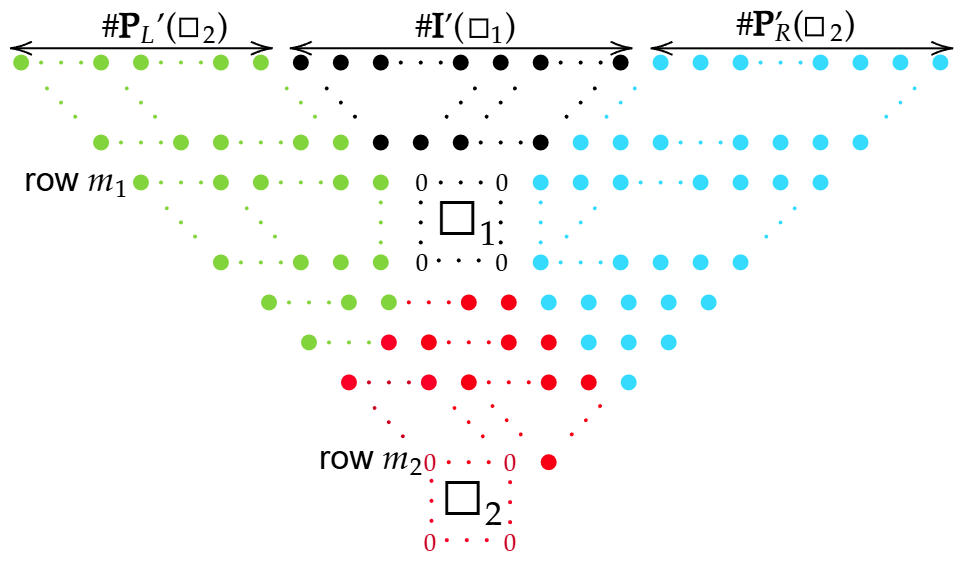}
    \caption{The finite number wall $W_q(I)$ is coloured in black, and the extensions given by $\mathbf{P}_L'(\Box_2)$ and $\mathbf{P}_R'(\Box_2)$ are in green and blue respectively. In this image, neither the sequence of length $\#\mathbf{P}_L'(\Box_2)+\#\mathbf{I}'(\Box_1)$ or $\#\mathbf{I}'(\Box_1)+\#\mathbf{P}_R'(\Box_2)$ generates a number wall with depth greater than or equal to $m_2$. However, Case 3.(a) allows for one of the aforementioned sequence to generate a finite number wall with depth greater than $m_2$.}
\end{figure}

\noindent  Without loss of generality, assume that $\lceil(\#\mathbf{I}'(\Box_1)+\#\mathbf{P}_R'(\Box_2))/2\rceil-1<m_2$. Then, the method is similar to that used in Case 2: indeed, begin by applying Corollary \ref{contain full} to show there are $q^{\#\textbf{I}'(\Box_1)+\#\mathbf{P}_R'(\Box_2)-l_1}$ choices for $\textbf{I}'(\Box_1)\oplus\mathbf{P}_R'(\Box_2)$ such that $W_q(\textbf{I}'(\Box_1)\oplus\mathbf{P}_R'(\Box_2))$ has a window containing $\Box_1$. Then, split into two cases as follows: \begin{itemize}
    \item[] \textbf{Case 3.(a).1:} The finite number wall $W_q(\textbf{I}'(\Box_1)\oplus\mathbf{P}_R'(\Box_2))$ does not have a window $\mathcal{W}$ that intersects $\Box_2$.
    \item[]\textbf{Case 3.(a).2:} The finite number wall $W_q(\textbf{I}'(\Box_1)\oplus\mathbf{P}_R'(\Box_2))$ has a window $\mathcal{W}$ that intersects $\Box_2$.
\end{itemize}
\noindent In Case 3.(a).1, one applies Lemma \ref{contain} to the left side of $\textbf{I}'(\Box_1)\oplus\mathbf{P}_R'(\Box_2)$ to obtain a contribution of $q^{\#\textbf{P}_L'(\Box_2)+\#\textbf{I}'(\Box_1)+\#\mathbf{P}_R'(\Box_2)-l_1-l_2}=q^{r-l_1-l_2}$ to $W_{r,q}(\Box_1,\Box_2)$.\\

\noindent For Case 3.(a).2, it can always be assumed that $\mathcal{W}$ either contains $\Box_2$ or is left-side open, as otherwise it is left-side closed and none of the choices for $\textbf{P}'_L(\Box_2)$ result in a sequence in $W_{r,q}(\Box_1,\Box_2)$. Furthermore, if $\mathcal{W}$ does not contain $\Box_2$ then there is a unique left-side extension to $\textbf{I}'(\Box_1)\oplus\mathbf{P}_R'(\Box_2)$ that extends $\mathcal{W}$ to do so. Finally, it can be assumed that $\mathcal{W}$ does not intersect $\Box_1$, otherwise both $\Box_1$ and $\Box_2$ would be inside the same window and hence this sequence would not be counted in $W_{r,q}(\Box_1,\Box_2)$.\\

\noindent Let $l_\mathcal{W}$ be the side-length of $\mathcal{W}$, and let $l'$ be the amount one must extend $\mathcal{W}$ on the left-side so that it contains $\Box_2$ (if $\mathcal{W}$ contains $\Box_2$, $l'=0$). Note here, that $\mathcal{W}$ (and hence $l_\mathcal{W}$ and $l'$) depends on the choice of $\textbf{I}'(\Box_1)\oplus\mathbf{P}_R'(\Box_2)$. However, for any choice of $\textbf{I}'(\Box_1)\oplus\mathbf{P}_R'(\Box_2)$ there is a corresponding value of $l_\mathcal{W}$ and $l'$, and hence the following method is always applicable. \\

\noindent For any finite sequence $\textbf{I}'(\Box_1)\oplus\mathbf{P}_R'(\Box_2)$ that falls in Case 3.(a).2, there are $q^{\#\textbf{P}_L'(\Box_2)-l'}$ extensions on the left such that $\textbf{P}_L'(\Box_1)\oplus\textbf{I}'(\Box_1)\oplus\mathbf{P}_R'(\Box_2)\in W_{r,q}(\Box_1,\Box_2)$. All that remains is to count how many times Case 3.(a).2 occurs. Indeed, By Corollary \ref{contain full} there are $q^{\#\textbf{I}'(\Box_1)-l_1}$ choices for $\textbf{I}'(\Box_1)$. From the assumption that $\mathcal{W}$ and $\Box_1$ do not intersect, one then applies Lemma \ref{contain} to show that there are $q^{\#\textbf{I}'(\Box_1)+\#\textbf{P}'_R(\Box_2)-l_1-l_\mathcal{W}}$ choices for $\textbf{I}'(\Box_1)\oplus\textbf{P}'_R(\Box_2)$ that fall into Case 3.(a).2. Therefore, this case contributes a total of \begin{equation}
    q^{\#\textbf{P}_L'(\Box_2)+\#\textbf{I}'(\Box_1)+\#\mathbf{P}_R'(\Box_2)-l_1-l_\mathcal{W}-l'} \label{cor_eqn_3}
\end{equation}to $W_{r,q}(\Box_1,\Box_2)$. Using that $\mathcal{W}$ contains $\Box_2$ (once it has been extended by length $l'$), one has that $l_\mathcal{W}+l'\ge l_2$ and hence \[(\ref{cor_eqn_3})\le q^{\#\textbf{P}_L'(\Box_2)+\#\textbf{I}'(\Box_1)+\#\mathbf{P}_R'(\Box_2)-l_1-l_2},\] as required.
\subsubsection*{Case 3.(b)}

\noindent The final sub-case of Case 3 is illustrated below. For now, ignore the distinction between light and dark blue dots and also ignore $\Box_3$ and the green square, $\mathcal{W}$.\begin{figure}[H]
    \centering
    \includegraphics[width=0.75\linewidth]{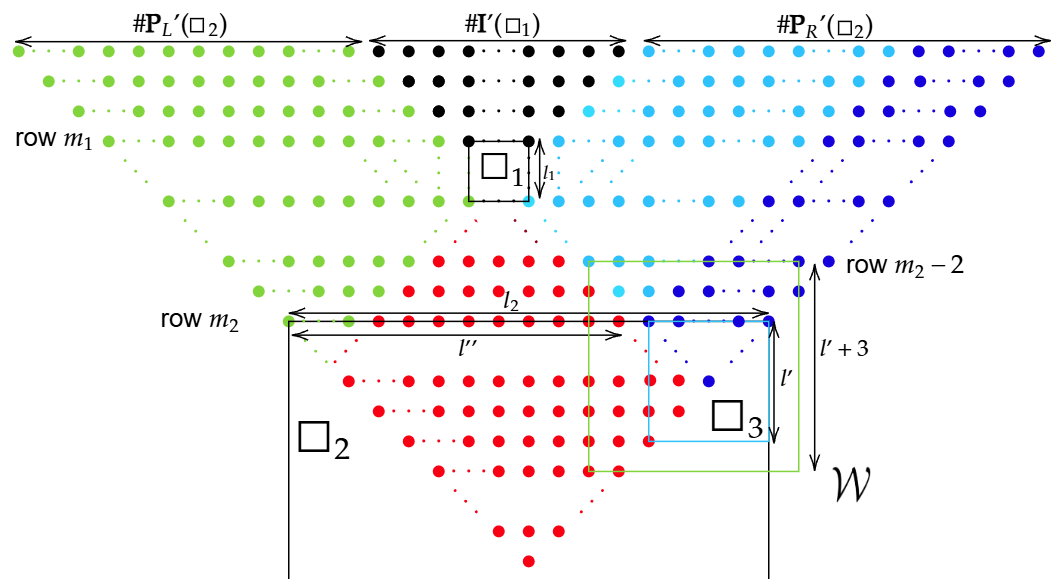}
    \caption{The finite sequence $\mathbf{I}'(\Box_1)$ (top row, black) is a minimal generator for $\Box_1$. The portion of the number wall of $\mathbf{P}_L'(\Box_2)\oplus \mathbf{I}'(\Box_1)\oplus \mathbf{P}_R'(\Box_2)$ that depends only on $\mathbf{P}_L'(\Box_2)\oplus \mathbf{I}'(\Box_1)$ ($\mathbf{I}'(\Box_1)\oplus \mathbf{P}_R'(\Box_2)$, respectively) is coloured in green (blue, respectively). The red portion depends on both $\mathbf{P}_L'(\Box_2)$, $\mathbf{I}'(\Box_1)$ and $\mathbf{P}_R'(\Box_2)$. This depicts Case 3.b, since both the blue and green portions go below row $m_2$.}
    \label{case3b}
\end{figure}

\noindent There is a unique square portion, call it $\Box_3$, that its top left corner on row $m_2$ and has $\mathbf{I}'(\Box_1)\oplus \mathbf{P}_R'(\Box_2)$ as a minimal generator. This is depicted as the blue square in Figure \ref{case3b}. By Corollary \ref{contain full}, there are $q^{\#\textbf{I}'(\Box_1) -l_1}$ choices for $I'(\Box_1)$. By Lemma \ref{contain} (with $\mathbf{I}'(\Box_1)$ in place of $\mathbf{S}_k$), one has $q^{\#\mathbf{P}_R'(\Box_2)-l'}$ such possibilities for $\mathbf{P}_R'(\Box_2)$ that would generate a window containing $\Box_3$. Call this window $\mathcal{W}$. Then, $\mathbf{P}_L'(\Box_2)$ is given by the unique left-side extension of $\mathbf{I}'(\Box_1)$ that extends $\mathcal{W}$. It is possible that $\mathcal{W}$ is left-side closed, in which case there are zero possibilities for $\mathbf{P}_L'(\Box_2)$, but as an overestimate it suffices to assume $\mathcal{W}$ is left-side open.\\

\noindent There is an added layer of complexity to this argument, however. This is illustrated by considering two separate choices for $\mathcal{W}$. First, assume $\mathcal{W}$ coincides exactly with $\Box_3$. Then, by Proposition \ref{wind_extend_lem}, there is a unique extension of length $l''=\#\mathbf{P}_L'(\Box_2)$ to $\mathcal{W}$ so that it contains $\Box_2$. Now let $\mathcal{W}$ be the window depicted by the green square in Figure \ref{case3b}. In this case, $\mathcal{W}$ has side length $l'+3$ and it begins on row $m_2-2$, and hence the bottom row of this window is $m_2+1$. Therefore, it only needs to be extended by $l''-1$ entries on the left side in order to contain $\Box_2$, and so the left-most entry of $\mathbf{P}_L'(\Box_2)$ is free. \\

\noindent In general, if $\mathcal{W}$ has its top row on row $m_2-i$ for some $i\in\N$, then $\mathcal{W}$ must have size at least $l'+i$ in order to contain $\Box_3$. In fact, $\mathcal{W}$ could have side length $l'+i+j$ for any $0\le j \le i$. In this case, one must extend $\mathcal{W}$ on the left by $l''-j$ entries to ensure that it contains $\Box_2$. This leaves the $j$ left-most entries of $\mathbf{P}_L'(\Box_2)$ free. As there are $q^{\#\mathbf{P}_R'(\Box_2)-l'-i-j}$ choices for $\mathbf{P}_R'(\Box_2)$ that contain this window $\mathcal{W}$, and each has $q^j$ choices for $\mathbf{P}_L'(\Box_2)$, one obtains that the number of choices for $\mathbf{P}_L'(\Box_2)$ and $\mathbf{P}_R'(\Box_2)$ combines is bounded from above by \begin{align*}
    \sum_{i=0}^\infty q^{\#\mathbf{P}_R'(\Box_2)-l'-i-j}\sum_{j=0}^i q^j &< q^{\#\mathbf{P}_R'(\Box_2) -l'}\sum_{i=0}^\infty (i+1)q^{ - i}\\&=q^{\#\mathbf{P}_R'(\Box_2) -l'}\sum_{k=0}^\infty \sum_{i=0}^\infty q^{-i-k} \\
 &\le 2q^{\#\mathbf{P}_R'(\Box_2) -l'} \sum_{k=0}^\infty q^{-k} \le 4q^{\#\mathbf{P}_R'(\Box_2) -l'}.
\end{align*}
\noindent Finally, using that $l''=\#\mathbf{P}_L'(\Box_2)$, there are $$q^{\#\mathbf{P}_L'(\Box_2)+\#\mathbf{P}_R'(\Box_2)+\#\mathbf{I}'(\Box_1)-l_1-l'-l''+2}< 4\cdot q^{\#\mathbf{M}(\Box_1,\Box_2)-l_1-l_2}$$ such sequences of length $\#\mathbf{P}'_L(\Box_2)+\#\mathbf{I}'(\Box_1)+\#\mathbf{P}_R'(\Box_2)$ that have windows containing $\Box_1$ and $\Box_2$. 
\end{proof}

\bibliographystyle{alphaurl}
\bibliography{refs}
\newpage
\addcontentsline{toc}{section}{\textbf{Glossary}}
\section*{Glossary}
\noindent The following glossary collects the main notations and definitions introduced in the paper. 
\subsection*{General Notation}
\begin{itemize}
    \item $\lceil\cdot\rceil$ - ceiling function;
    \item $\lfloor\cdot\rfloor$ - floor function;
    \item $\dim$ - Hausdorff dimension;
    \item $\log_q$ - logarithm base $q\in\N$;
    \item $\gg$ \& $\ll$ - Vinogradov notation: For functions $f,g:\R\to\R_{>0}$, $f\ll g$ if there exists some constant $c$ such that for all $x\in\R$, $|f(x)|\le c\cdot |g(x)|$. Similarly for $\gg$. If there is a subscript, then the constant $c$ depends only on the stated variables.
\end{itemize}
\subsection*{Function Field Set Up}
\begin{itemize}
    \item $B(\Theta(t),q^{-l})$ - ball of radius $q^{-l}$ around $\Theta(t)$, defined in equation (\ref{ball})
    \item $\F_q$: the finite field of cardinality $q$;
    \item $\F_q[t]=\{\sum_{i=0}^h a_it^i: a_i\in\F_q, h\in\N\}$ - polynomial ring over $\F_q$;
    \item $\F_q\!\left(\!\left(t^{-1}\right)\!\right)=\{\sum_{i=-h}^\infty a_it^{-i}: a_i\in\F_q, h\in\Z\}$ - field of Laurent series over $\F_q$;
    \item $\langle \sum_{i=-h}^\infty a_it^{-i}\rangle:=\sum_{i=1}^\infty a_it^{-i}$ - fractional part of a Laurent Series;
    \item $\mu$ - the Haar measure over $\F_q$, defined in equation (\ref{Haar});
    \item $W_q(P(t),f)$ - Laurent series satisfying $P(t)$-LC with growth function $f$ - equation (\ref{W_q(P(t),f)}); 
    \item $M_q(P(t),f)=\I\backslash W_q(P(t),f)$;
    \item $|\sum_{i=-h}^\infty a_i t^{-i}|=q^{h}$ where $a_{-h}\neq0$ - norm of a Laurent series;
    \item $|N(t)|_{P(t)}=q^{-\deg(P(t))\cdot \max\{i\ge0:P(t)^i|N(t)\}}$ where $N(t),P(t)\in\F_q[t]$ - $P(t)$-adic norm;
    \item $\I=\left\{\Theta(t)\in{\F_q}\left(\!\left(t^{-1}\right)\!\right): |\Theta(t)|<1\right\}$ - unit interval over function fields;

\end{itemize}
\subsection*{Notation Relating to Number Walls}
\begin{itemize}
    \item The \textbf{blade} of a finite number wall: Definition \ref{blade_def};
    \item The \textbf{blade continuation function} $C$: Definition \ref{Qdef};
    \item \textbf{Closed window}: Definition \ref{closed};
    \item \textbf{Complete window}: Definition \ref{complete};
    \item $\oplus$: \textbf{concatenation} of finite sequences: bottom of page 15;
    \item A window \textbf{containing} a square portion: Definition \ref{contain_def};
    \item $k^\nth$ \textbf{diagonal} of a number wall: Definition \ref{diag};
    \item \textbf{Dot diagrams}: Figure \ref{dotdiagram}, page 13;
    \item \textbf{Finite number wall}: beginning of Section \ref{Sect: 3.3};
    \item \textbf{Free} or \textbf{determined} entry in a number wall: Definition \ref{determined};
    \item \textbf{Hankel} and \textbf{Toeplitz} matrices: Definition \ref{hanktoe};
    \item \textbf{Number Walls}: Definition \ref{nw};
    \item \textbf{Windows}/\textbf{Square Window Theorem}: Theorem \ref{window};
    \item \textbf{Inner frame}: Definition \ref{frame};
    \item \textbf{Open window}: Definition \ref{open_wind}
    \item The \textbf{profile} of a number wall: Definition \ref{profile_def};
    \item \textbf{Minimal generator}: Definition \ref{hat}
    \item $R_{r,q}(l,d,m,n)$ - \textbf{rectangular portion}: Definition \ref{Rect_prtn};
    \item $\Box(l,m,n)$ - \textbf{square portion} with top left corner row $m$ and column $n$: top of page 15;
    \item $\widehat\Box(l,m,k)$ - \textbf{square portion} with top left corner row $m$ and diagonal $k$: middle of page 19;
    \item $l$-\textbf{tapered} sequence: Definition \ref{tapered};
    \item The \textbf{Tree Diagrams}: Figures \ref{XXOTD}, \ref{XXXTD} and \ref{XOXTD};
    \item $W_{r,q}(\Box_1,\Box_2)$ sequences with windows containing two square portions in their number wall: Section \ref{Sect: Pair of Wind};

\end{itemize}
\raisebox{-10ex}
{
    \begin{minipage}{10cm}
        \textbf{Steven Robertson}\\
        School of Mathematics\\
        The University of Manchester\\
        Alan Turing Building\\
        Manchester, M13 9PL\\
        United Kingdom\\
        \texttt{steven.robertson@manchester.ac.uk}  
    \end{minipage}
} 
\end{document}